\numberwithin{equation}{section}
\newtheorem{thm}{Theorem}[section]
\newtheorem{cor}[thm]{Corollary}
\newtheorem{prop}[thm]{Proposition}
\newtheorem{lem}[thm]{Lemma}
\newtheorem{fact}[thm]{Fact}
\theoremstyle{remark}
\newtheorem{rem}[thm]{Remark}
\theoremstyle{example}
\theoremstyle{definition}
\newtheorem{defn}[thm]{Definition}
\theoremstyle{remark}
\newcommand{\R}{\mathbb{R}}
\newcommand{\F}{\mathbb{F}}
\newcommand{\Z}{\mathbb{Z}}
\newcommand{\C}{\mathbb{C}}
\newcommand{\rank}{\operatorname{rank}}
\newcommand{\Aut}{\mathrm{Aut}}
\newcommand{\id}{\mathrm{id}}
\newcommand{\OP}{\operatorname}
\begin{document}

\title[The stable Morse number as a lower bound]{The stable Morse number as a lower bound for the number of Reeb chords}
\author[G. Dimitroglou Rizell and R. Golovko]{Georgios Dimitroglou Rizell and Roman Golovko}

\begin{abstract}
Assume that we are given a closed chord-generic Legendrian submanifold $\Lambda \subset P \times \R$ of the contactisation of a Liouville manifold, where $\Lambda$ moreover admits an exact Lagrangian filling $L_{\Lambda} \subset \R \times P \times \R$ inside the symplectisation. Under the further assumptions that this filling is spin and has vanishing Maslov class, we prove that the number of Reeb chords on $\Lambda$ is bounded from below by the stable Morse number of $L_{\Lambda}$. Given a general exact Lagrangian filling $L_{\Lambda}$, we show that the number of Reeb chords is bounded from below by a quantity depending on the homotopy type of $L_{\Lambda}$, following Ono-Pajitnov's implementation in Floer homology of  invariants due to Sharko. This improves previously known bounds in terms of the Betti numbers of either $\Lambda$ or $L_{\Lambda}$.
\end{abstract}



\maketitle

\tableofcontents

\section{Introduction}
\subsection{Motivation}
One of the first striking applications of Gromov's theory of pseudoholomorphic curves \cite{PCISM} was that a closed exact Lagrangian immersion $\widetilde{\Lambda} \subset (P,d\theta)$ inside a Liouville manifold must have a double-point, given the assumption that it is Hamiltonian displaceable. Gromov's result has the following contact-geometric reformulation, which will turn out to be useful. Consider the so-called \emph{contactisation} $(P \times \R,dz+\theta)$ of the Liouville manifold $(P,d\theta)$, which is a contact manifold with the choice of a contact form. Recall that a (generic) exact Lagrangian immersion $\widetilde{\Lambda} \subset (P,d\theta)$ lifts to a Legendrian (embedding) $\Lambda \subset (P \times \R,dz+\theta)$. One says that $\Lambda$ is \emph{horizontally displaceable} given that $\widetilde{\Lambda}$ is Hamiltonian displaceable. The above result thus translates into the fact that a horizontally displaceable Legendrian submanifold $\Lambda$ must have a \emph{Reeb chord} for the above standard contact form --- i.e.~a non-trivial integral curve of $\partial_z$ having endpoints on $\Lambda$. A similar result holds for Legendrian submanifolds of boundaries of subcritical Weinstein manifolds, as proven in \cite{HDATCC} by Mohnke.

In the spirit of Arnold \cite{FSIST}, the following conjectural refinement of the above result was later made: the number of Reeb chords on a chord-generic Legendrian submanifold $\Lambda \subset (P \times \R,dz+\theta)$ whose Lagrangian projection is Hamiltonian displaceable is at least $\frac{1}{2}\sum_i b_i(\Lambda;\F)$. However, as was shown by Sauvaget in \cite{CLED4} by  the explicit counter-examples inside the standard contact vector space $(\R^4 \times \R,dz+\theta_0)$, $\theta_0=-(y_1dx_1+y_2dx_2)$, the above inequality is not true without additional assumptions on the Legendrian submanifold; also, see the more recent examples constructed in \cite{CELIWFDP} by Ekholm-Eliashberg-Murphy-Smith. The latter result is based upon the h-principle proven in \cite{LC} by Eliashberg-Murphy for Lagrangian cobordisms having loose negative ends in the sense of Murphy \cite{LLEIHDCM}.

On the positive side, the above Arnold-type bound has been proven using the Legendrian contact homology of the Legendrian submanifold, under the additional assumption that the Legendrian contact homology algebra is sufficiently well-behaved. Legendrian contact homology is a Legendrian isotopy invariant independently constructed by Chekanov \cite{DAOLL} and Eliashberg-Givental-Hofer \cite{ITSFT}, and later developed by Ekholm-Etnyre-Sullivan \cite{LCHIPR}. This invariant is defined by encoding pseudoholomorphic disc counts in the Legendrian contact homology differential graded algebra (DGA for short) which usually is called the \emph{Chekanov-Eliashberg algebra} of the Legendrian submanifold. In the case when the Chekanov-Eliashberg algebra of a Legendrian admits an augmentation (this should be seen as a form of non-obstructedness for its Floer theory), the above Arnold-type bound was proven by Ekholm-Etnyre-Sullivan in \cite{OILCHAELI} and by Ekholm-Etnyre-Sabloff in \cite{ADESFLCH}. In \cite{ETNORCUALROTCA}, the authors generalised this proof to the case when the Chekanov-Eliashberg algebra admits a finite-dimensional matrix representation, in which case the same lower bound also is satisfied.

The above Arnold-type bound is also related to the one regarding the number of Hamiltonian chords between the zero-section in $T^*L$ (or, more generally, any exact closed Lagrangian submanifold of a Liouville manifold) and its image under a generic Hamiltonian diffeomorphism. Namely, such Hamiltonian chords correspond to Reeb chords on a Legendrian lift of the union of the Lagrangian submanifold and its image under the Hamiltonian diffeomorphism. In fact, as shown by Laudenbach-Sikorav in \cite{PERSISTANCE}, the number of such chords is bounded from below by the stable Morse number of the zero-section (and hence, in particular, it is bounded from below by half of the Betti numbers of the disjoint union of \emph{two} copies of the zero-section). Arnold originally asked whether this bound can be improved, and if in fact the \emph{Morse number} of the zero-section is a lower bound. However, this question seems to be out of reach of current technology. On the other hand, we note that the stable Morse number is equal to the Morse number in a number of cases; see \cite{OTSMNOACM} as well as Section \ref{sec:gendefs} below for more details.

Finally, we mention the remarkable result by Ekholm-Smith in \cite{ELIWODP}, which shows that the smooth structure itself can predict the existence of more double points than the original bound given in terms of the homology. Namely, a $2k$-dimensional manifold $\Sigma^{2k}$ for $k > 2$ that admits a Legendrian embedding having precisely one transverse Reeb chord in the standard contact space must be \emph{diffeomorphic} to the standard sphere unless $\chi(\Sigma^{2k})=-2$. Also see \cite{ELIWODPR} for similar results in other dimensions.

\subsection{Results}
In this paper, we will explore a priori lower bounds for the number of Reeb chords on a Legendrian submanifold $\Lambda \subset (P \times \R,dz+\theta)$, given that it admits an exact Lagrangian filling $L_\Lambda \subset (\R \times P \times \R,d(e^t(dz+\theta)))$ inside the symplectisation. Recall that the condition of admitting an exact Lagrangian filling is invariant under Legendrian isotopy; see e.g.~\cite{LCOLK}. The bound will be given in terms of the simple homotopy type of $L_\Lambda$. First, we recall that such a Legendrian submanifold automatically has a well-behaved Chekanov-Eliashberg algebra; namely, an exact Lagrangian filling induces an augmentation by \cite{RSFTLLCHALFC}. In the case when the projection of $\Lambda$ to $\widetilde{\Lambda} \subset P$ is displaceable, the aforementioned result can thus be applied, giving the above Arnold-type bound. However, in this case, there are even stronger bounds that can be obtained from the topology of the exact Lagrangian filling $L_\Lambda$ (and without the assumption of horizontal displaceability). See Section \ref{sec:wrapped} below for previous such results as well as an outline of the proof, which is based upon Seidel's isomorphism in wrapped Floer homology. This is also the starting point of the argument that we will use in order to prove our results here.

In the following we assume that a Legendrian submanifold $\Lambda \subset (P \times \R,\alpha:=dz+\theta)$ is chord-generic and has an exact Lagrangian filling $L_\Lambda \subset (\R \times P \times \R,d(e^t\alpha))$. Here $t$ denotes the coordinate on the first $\R$-factor. In particular, the set of Reeb chords $\mathcal{Q}(\Lambda)$ of $\Lambda$ is finite. Further, the set of Reeb chords $c$ in degree $|c|=\OP{CZ}(c)-1 \in \Z / \Z\mu_{L_\Lambda}$ will be denoted by $\mathcal{Q}_{|c|}(\Lambda)$, where the grading is induced by the Conley-Zehnder index modulo the Maslov number $\mu_{L_\Lambda} \in \Z$ of $L_\Lambda$ as defined in \cite{TCHOLSIR}. Observe that $\mu_{L_\Lambda}=0$ in particular implies that the first Chern class of $(P,d\theta)$ vanishes on $H_2(P)$.

For a group $G$ being the epimorphic image of $\pi_1(L_\Lambda)$, consider the Morse homology complex $(CM_{\bullet}(L_\Lambda,f;R[G]),\partial_f)$ of $L_\Lambda$ with coefficients in the group ring $R[G]$ twisted by the fundamental group, where $R$ is a unital commutative ring and $f\colon L_\Lambda \to \R$ is a Morse function satisfying $df(\partial_t)>0$ outside of a compact set. (The generators of this complex are graded by the Morse index, and the differential counts negative gradient flow lines.)
\begin{thm}\label{mainthmsmplehomeqnts}
Let $L_\Lambda \subset (\R \times P \times \R,d(e^t\alpha))$ be an exact Lagrangian filling of an $n$-dimensional closed Legendrian submanifold $\Lambda \subset ( P \times \R,\alpha)$ with fundamental group $\pi:=\pi_1(L_\Lambda)$ and Maslov number $\mu_{L_\Lambda} \in \Z$.
\begin{itemize}
\item[(i)] In the case when the filling is spin and when $\mu_{L_\Lambda}=0$, the Morse homology complex $(CM_\bullet(L_\Lambda,f;\Z[\pi]),\partial_f)$ is simple homotopy equivalent to a $\Z[\pi]$-equivariant complex $(\Z[\pi]\langle \mathcal{Q}_{n-\bullet}(\Lambda) \rangle,\partial)$;
\item[(ii)] In the general case, it follows that the complex $$(CM_\bullet(L_\Lambda,f;R[G]),\partial_f)$$ is homotopy equivalent in the category of $G$-equivariant complexes to a complex $(R[G]\langle \mathcal{Q}_{n-\bullet}(\Lambda) \rangle,\partial)$ with grading in $\Z/\mu_{L_\Lambda}\Z$. Here we can always take $R=\Z_2$, while we are free to choose an arbitrary unital commutative ring in the case when $L_\Lambda$ is spin.
\end{itemize}
\end{thm}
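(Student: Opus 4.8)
The plan is to compare two filtered complexes attached to the exact Lagrangian filling $L_\Lambda$: on the one hand a Morse-type complex generated by critical points of $f$, on the other a Floer-type complex whose generators are the Reeb chords $\mathcal{Q}(\Lambda)$, and to realise the passage from the first to the second as a sequence of elementary (simple) homotopy equivalences. The starting point, as indicated in the introduction, is the Cthulhu/linearised Legendrian contact cohomology complex of $\Lambda$ computed with respect to the augmentation induced by the filling $L_\Lambda$ (via \cite{RSFTLLCHALFC}); concretely I would work with the version of wrapped Floer homology of $L_\Lambda$ relative to itself, whose underlying complex — after a suitable choice of Hamiltonian perturbation making the Reeb chords the only Hamiltonian chords at infinity — splits as a part generated by $\mathcal{Q}(\Lambda)$ (the ``chords at infinity'') and a part generated by the intersection points of $L_\Lambda$ with a small pushoff, the latter being identified with the Morse complex of a function $f$ with $df(\partial_t)>0$ outside a compact set. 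The equivalence of this wrapped complex with the Morse complex of $L_\Lambda$ is Seidel's isomorphism, alluded to in Section~\ref{sec:wrapped}; I would want it in the sharp form of a chain-level homotopy equivalence that is moreover simple, keeping track of the $\pi_1(L_\Lambda)$-action by working in the universal cover and hence over $\Z[\pi]$ (resp. $R[G]$ after pushing forward along $\pi \twoheadrightarrow G$).

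The key steps, in order, are as follows. First, set up the twisted coefficients: lift all pseudoholomorphic curve counts to the universal cover of $L_\Lambda$ so that the differentials are $\Z[\pi]$-linear, checking that the relevant moduli spaces and their compactifications are compatible with this lift (this is where the spin assumption and $\mu_{L_\Lambda}=0$ enter, to orient the moduli spaces over $\Z$ and to get a $\Z$-grading rather than a $\Z/\mu_{L_\Lambda}\Z$-grading; in the general case one drops to $R=\Z_2$ and a cyclic grading). Second, express the total wrapped complex as a mapping cone, or more precisely as a twisted complex, built from the Morse complex $(CM_\bullet(L_\Lambda,f;\Z[\pi]),\partial_f)$ and the chord complex $(\Z[\pi]\langle\mathcal{Q}_{n-\bullet}(\Lambda)\rangle,\partial)$, with a connecting map counting mixed discs. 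Third — the heart of the argument — show that this twisted complex is acyclic (equivalently, that the continuation map realising Seidel's isomorphism is a quasi-isomorphism onto a contractible complement), so that the two pieces are homotopy equivalent after a shift and reindexing $\bullet \mapsto n-\bullet$ coming from Poincaré–Lefschetz duality on $L_\Lambda$. Fourth, upgrade ``homotopy equivalence'' to ``simple homotopy equivalence'' in case (i): this requires that the torsion of the relevant acyclic complex vanishes in $\mathrm{Wh}(\pi)$, which I would deduce from the fact that the continuation maps and the deformation can be arranged through a finite composition of stabilisations and elementary changes of basis (destabilisations of ``short'' trivial subcomplexes generated by cancelling pairs), each of which has trivial torsion — essentially the same mechanism by which the Morse complex of a fixed function is simply equivalent to that of any other.

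The main obstacle I expect is the third and fourth steps done with honest $\Z[\pi]$-coefficients and at the chain level: proving acyclicity of the mixed complex requires a neck-stretching / adiabatic degeneration argument identifying the mixed moduli spaces (strips from an intersection point to a Reeb chord) with broken configurations, and one must do this equivariantly and with coherent orientations, which is delicate. Keeping the equivalence \emph{simple} is even more delicate, since a generic homotopy equivalence of based complexes over $\Z[\pi]$ need not be simple; the point will be to exhibit it as built from the canonical ``birth/death'' moves, for which I would invoke (and adapt to this setting) the framework of Ono–Pajitnov following Sharko referenced in the abstract, where exactly such a refinement of the Floer-theoretic comparison is carried out. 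In the non-spin or $\mu_{L_\Lambda}\neq 0$ case these orientation and grading issues disappear at the cost of working over $\Z_2$ and losing simplicity, which is precisely the dichotomy recorded in the statement.
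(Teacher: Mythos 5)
Your plan follows the Seidel-isomorphism route sketched in Section~\ref{sec:wrapped}: build one complex out of the Morse generators of a small push-off and the chord generators, prove the mixed/mapping-cone complex is acyclic, and deduce an equivalence between the two pieces. Over $R[G]$ this could plausibly deliver part (ii), since a quasi-isomorphism between bounded complexes of free finitely generated modules is automatically a homotopy equivalence (granting the nontrivial acyclicity with twisted coefficients that you flag). But for part (i) there is a genuine gap: acyclicity of the cone says nothing about the Whitehead torsion of the resulting equivalence, and your proposed fix --- that the continuation map/deformation ``can be arranged through a finite composition of stabilisations and elementary changes of basis'' --- is exactly the statement that requires proof, not something one gets for free. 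Moreover, the tool you invoke for it is misattributed: the Ono--Pajitnov/Sharko machinery \cite{OTFPOAHDIPOFG} is purely algebraic, takes a $G$-equivariant chain homotopy type as input, and is used in this paper only to extract the numerical bounds of Theorems~\ref{intrineqabssi} and~\ref{intrineqrelsimaslnonz}; it does not show that any Floer-theoretic comparison map is simple.

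What actually closes this gap in the paper is Sullivan's bifurcation-analysis invariance theorem for Floer complexes with twisted coefficients (Theorem~\ref{thm:invariance}, from \cite{KTIFFH}): a Hamiltonian deformation that is fixed outside a compact set changes the Floer complex only by handle-slides and birth/death moves, hence by a simple homotopy equivalence over $\Z[\pi]$ when the filling is spin and $\mu_{L_\Lambda}=0$ (and by an equivariant homotopy equivalence over $R[G]$ in general). The geometric input is correspondingly leaner than yours: one identifies $CF_\bullet(L_\Lambda,L_{\Lambda^\epsilon}';R[\pi])$ with the Morse complex of $f$ (Proposition~\ref{prop:morsecomplex}), then applies the explicit wrapping isotopy $\Phi^S_{g\partial_z}$, which moves $L_\Lambda$ only in a compact portion of itself and after which \emph{all} intersection points correspond bijectively to Reeb chords with grading $|p_c|=n-|c|$, computed by a local Conley--Zehnder model (Proposition~\ref{prop:disp}) rather than by Poincar\'e--Lefschetz duality. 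In particular no acyclicity statement, neck-stretching, or adiabatic analysis of mixed moduli spaces is needed. If you prefer to keep the mapping-cone formulation, you would still have to supply an equivariant, chain-level control of torsion for the continuation maps --- i.e.\ precisely the bifurcation analysis of \cite{KTIFFH} --- so that ingredient cannot be bypassed.
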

We prove Theorem~\ref{mainthmsmplehomeqnts} in Section~\ref{maintheoremanditsconsequences}. Now let $\OP{stabMorse}(M)$ denote the stable Morse number of a manifold $M$ with possibly non-empty boundary, see Definition \ref{defn:stablemorse}. Using Theorem~\ref{mainthmsmplehomeqnts} and the adaptation of \cite[Theorem 2.2]{OTSMNOACM} to the case of manifolds with boundary (see Proposition \ref{prop:morse}), the following result is immediate:
\begin{cor}\label{maininequalitystableMorsenumberofafilling}
Suppose that $\Lambda \subset  P \times \R$ is a chord-generic closed Legendrian submanifold admitting an exact Lagrangian filling $L_\Lambda$ which is spin and has vanishing Maslov number. It follows that the bound
\begin{align}\label{mainineqstMrc}
|\mathcal{Q}(\Lambda)| \ge \OP{stableMorse}(L_\Lambda)
\end{align}
is satisfied for the number of Reeb chords on $\Lambda$.
\end{cor}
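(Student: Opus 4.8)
The plan is to deduce Corollary~\ref{maininequalitystableMorsenumberofafilling} directly from Theorem~\ref{mainthmsmplehomeqnts}(i) together with the promised adaptation of \cite[Theorem 2.2]{OTSMNOACM} to manifolds with boundary, stated as Proposition~\ref{prop:morse}. First I would fix a Morse function $f\co L_\Lambda \to \R$ with $df(\partial_t)>0$ outside a compact set, so that the Morse homology complex $(CM_\bullet(L_\Lambda,f;\Z[\pi]),\partial_f)$ is defined. Under the hypotheses of the corollary --- $L_\Lambda$ spin and $\mu_{L_\Lambda}=0$ --- part~(i) of the theorem gives a simple homotopy equivalence of $\Z[\pi]$-complexes between $(CM_\bullet(L_\Lambda,f;\Z[\pi]),\partial_f)$ and the ``Reeb chord complex'' $(\Z[\pi]\langle\mathcal{Q}_{n-\bullet}(\Lambda)\rangle,\partial)$. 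The key point is that the latter complex is free over $\Z[\pi]$ with a basis indexed by $\mathcal{Q}_{n-\bullet}(\Lambda)$, so its total number of generators over $\Z[\pi]$ is exactly $|\mathcal{Q}(\Lambda)|=\sum_k|\mathcal{Q}_k(\Lambda)|$.

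Next I would invoke Proposition~\ref{prop:morse}: since $\OP{stabMorse}(L_\Lambda)$ is, by the manifolds-with-boundary analogue of Sharko's / the Ono--Pajitnov invariant, characterised as the minimal total number of generators of a $\Z[\pi]$-complex that is simple homotopy equivalent (as a $\Z[\pi]$-complex, compatibly with the geometric structure of $L_\Lambda$) to $(CM_\bullet(L_\Lambda,f;\Z[\pi]),\partial_f)$, any complex in that simple homotopy equivalence class has at least $\OP{stabMorse}(L_\Lambda)$ generators. Applying this to the Reeb chord complex, whose number of $\Z[\pi]$-generators is $|\mathcal{Q}(\Lambda)|$, yields $|\mathcal{Q}(\Lambda)|\ge\OP{stabMorse}(L_\Lambda)$, which is precisely \eqref{mainineqstMrc}.

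The only genuine subtlety --- and the step I would treat most carefully --- is making sure that the notion of ``number of generators of a complex in the simple homotopy type'' used in the characterisation of the stable Morse number via Proposition~\ref{prop:morse} matches exactly the notion for which Theorem~\ref{mainthmsmplehomeqnts}(i) produces a competitor. Concretely, one must check that a simple homotopy equivalence of free $\Z[\pi]$-complexes does not allow one to artificially reduce the generator count below $\OP{stabMorse}$, i.e.\ that $\OP{stabMorse}(L_\Lambda)$ is genuinely the \emph{minimum} over the whole simple homotopy equivalence class (this is exactly the content of the adaptation of \cite[Theorem 2.2]{OTSMNOACM}, and is why the spin and Maslov-zero hypotheses are needed --- to have honest $\Z[\pi]$-coefficients rather than merely $\Z_2[\pi]$). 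Granting Proposition~\ref{prop:morse} as stated in the excerpt, the corollary follows in two lines; I would simply write: \emph{By Theorem~\ref{mainthmsmplehomeqnts}(i) the complex $(\Z[\pi]\langle\mathcal{Q}_{n-\bullet}(\Lambda)\rangle,\partial)$ is simple homotopy equivalent to $(CM_\bullet(L_\Lambda,f;\Z[\pi]),\partial_f)$, and it has $|\mathcal{Q}(\Lambda)|$ free generators over $\Z[\pi]$; by Proposition~\ref{prop:morse} any such complex has at least $\OP{stabMorse}(L_\Lambda)$ generators, hence $|\mathcal{Q}(\Lambda)|\ge\OP{stableMorse}(L_\Lambda)$.}
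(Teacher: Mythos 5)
Your proposal is correct and matches the paper's argument, which indeed deduces the corollary immediately from Theorem~\ref{mainthmsmplehomeqnts}(i) together with Proposition~\ref{prop:morse}. The only point worth flagging is that Proposition~\ref{prop:morse} is a realisation statement --- every free $\Z[\pi]$-complex simple homotopy equivalent to $CM_\bullet(L_\Lambda,f;\Z[\pi])$ is the Morse complex of some almost-quadratic-at-infinity function on $L_\Lambda\times\R^k$ --- and it is this, combined with Definition~\ref{defn:stablemorse}, that yields the generator-count characterisation of $\OP{stableMorse}(L_\Lambda)$ you invoke; applied to the Reeb-chord complex produced by Theorem~\ref{mainthmsmplehomeqnts}(i), it gives $|\mathcal{Q}(\Lambda)|\ge\OP{stableMorse}(L_\Lambda)$ exactly as you wrote.
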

By using the long exact sequence in singular homology of the pair $(\overline{L_\Lambda},\partial{\overline{L_\Lambda}}=\Lambda)$, where $\overline{L_\Lambda}$ denotes the compact part of $L_\Lambda$, we obtain the following inequalities
\[ \OP{stableMorse}(L_\Lambda) \ge \sum_i b_i(L_\Lambda;\F) \ge \frac{1}{2}\sum_i b_i(\Lambda;\F),\]
for any field $\F$. Obviously, Inequality~\eqref{mainineqstMrc} is a strengthening of the original Arnold-type bound. For a discussion about how to construct examples of exact Lagrangian fillings for which our obtained lower bound is strictly greater than previously known bounds in terms of the homology of the filling, we refer to Section \ref{sec:examples}.

Note that in forthcoming work \cite{UCLCHIP}, Eriksson \"{O}stman also obtains an improved version of the above Arnold-type bound for certain horizontally displaceable Legendrian submanifolds. The bound is obtained in terms of the homotopy type of the Legendrian submanifold itself. It does not assume the existence of an exact Lagrangian filling, but rather assumes the existence of an augmentation of a version of the Chekanov-Eliashberg algebra having twisted coefficients that is defined in the same article.

In the course of showing the above result, we also obtain the following generalisation of the aforementioned result by Sikorav-Laudenbach \cite{PERSISTANCE}, which also is related to the theory of stable intersection numbers as introduced by Eliashberg-Gromov in \cite[Section 2.3]{LITFDA}.
\begin{thm}
\label{thm:stablefloer}
Consider a closed exact Lagrangian submanifold $L \subset (P,d\theta)$ which is spin and has vanishing Maslov number. For any $k \ge 0$, the exact Lagrangian submanifold $L \times \R^k \subset (P \times \C^k,d\theta \oplus \omega_0)$ with $\omega_0=dx_1\wedge dy_1+\dots +dx_k\wedge dy_k$ satisfies the property that
\[ \# (L \times \R^k) \pitchfork \phi^1_{H_s}(L \times \R^k) \ge \OP{stableMorse}(L),\]
given that the above intersection is transverse, and that the Hamiltonian is of the form $H_s = f_s+Q$, where:
\begin{itemize}
\item $Q(x_1\!+\!iy_1,\hdots,x_k\!+\!iy_k)\!=\!Q(x_1,\hdots,x_k)$ is a non-degenerate quadratic form on $\R^k \subset \C^k$; and
\item $f_s \colon P \times \C^k \to \R$, $s\in [0,1]$, satisfies that $\max_{s \in [0,1]}\|f_s\|_{C^1}$ is bounded for a product Riemannian metric of the form $g_P \oplus g_{\OP{std}}$ on $P \times \C^k$. Here we moreover require $g_P$ to be invariant under the Liouville flow on $(P,d\theta)$ outside of a compact subset, while $g_{\OP{std}}$ denotes the Euclidean metric.
\end{itemize}
\end{thm}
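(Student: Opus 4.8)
The plan is to reduce the count of intersection points to the number of generators of an appropriate Lagrangian Floer complex of the pair $(L\times\R^k,\phi^1_{H_s}(L\times\R^k))$, to identify the simple homotopy type of that complex with that of the $\pi_1(L)$-twisted Morse complex of $L$, and then to invoke the characterisation of $\OP{stableMorse}(L)$ as the minimal number of generators of a bounded complex of finitely generated free $R[\pi_1(L)]$-modules lying in the simple homotopy type of the cellular chain complex of the universal cover of $L$ (the adaptation of Sharko's and Ono--Pajitnov's results recorded in Proposition~\ref{prop:morse}).

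First I would set up the Floer complex $CF_\bullet(L\times\R^k,\phi^1_{H_s}(L\times\R^k))$, with coefficients in $\Z_2$ in general and, since $L$ is assumed spin, in $R[\pi_1(L)]$ for an arbitrary unital commutative ring $R$ (the local system being induced by the inclusion, orientations coming from a spin structure, and the $\Z$-grading being available because the Maslov number of $L$ vanishes). The hypotheses on $H_s$ are precisely what makes this well defined: exactness of $L$ rules out disc and sphere bubbling; invariance of $g_P$ under the Liouville flow outside a compact set, together with the $C^1$-bound on $f_s$, yields the standard maximum-principle confinement of Floer strips to a compact part of $P$; and in the $\C^k$-direction the non-degenerate quadratic form $Q$, together with the linear Lagrangian boundary conditions $\R^k$ and $\phi^1_Q(\R^k)$ whose intersection is transverse and equal to the origin, confines the $\C^k$-coordinates of finite-energy strips to a compact set. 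The generators of this complex are exactly the transverse intersection points of $L\times\R^k$ with $\phi^1_{H_s}(L\times\R^k)$.

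Next I would determine its simple homotopy type. Using the $C^1$-bound on $f_s$, one interpolates through a family of admissible Hamiltonians from $H_s$ to the split Hamiltonian $H_0=\varepsilon f+Q$, where $f\co L\to\R$ is a fixed Morse function and $\varepsilon>0$ is small; the resulting continuation maps are chain homotopy equivalences (there is no bubbling, by exactness), and in fact simple homotopy equivalences, by the action-filtration argument showing the continuation maps to be filtered with invertible ``diagonal'' part --- the same mechanism used in the proof of Theorem~\ref{mainthmsmplehomeqnts}. For $H_0$ split and $\varepsilon$ small, a K\"unneth splitting together with the Floer-to-Morse comparison identifies $CF_\bullet(L\times\R^k,\phi^1_{H_0}(L\times\R^k))$ with the tensor product of the $\pi_1(L)$-twisted Morse complex $CM_\bullet(L,f;R[\pi_1(L)])$ and the Floer complex of the pair $(\R^k,\phi^1_Q(\R^k))$, the latter being $R$ placed in the single degree $\OP{ind}(Q)$. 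Hence the Floer complex above is simple homotopy equivalent, as a complex of finitely generated free $R[\pi_1(L)]$-modules, to $CM_{\bullet-\OP{ind}(Q)}(L,f;R[\pi_1(L)])$, and therefore to the cellular chain complex of the universal cover of $L$ up to a degree shift.

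Combining the two steps, the Floer complex whose generators count the transverse intersection points $(L\times\R^k)\pitchfork\phi^1_{H_s}(L\times\R^k)$ is a bounded complex of finitely generated free $R[\pi_1(L)]$-modules in the simple homotopy type of $C_\bullet(\widetilde{L})$; by Proposition~\ref{prop:morse} its number of generators is at least $\OP{stableMorse}(L)$, which --- being unchanged under stabilisation by $\R^k$ --- gives the asserted bound. The main obstacle is the first step: making the Floer theory rigorous in this doubly non-compact situation, and in particular the simultaneous $C^0$-confinement of strips in the $P$-direction (via the Liouville-invariant metric and the maximum principle) and in the $\C^k$-direction (via the non-degenerate quadratic form and the energy bounds coming from exactness). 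Tracking the Whitehead torsion through the continuation and Floer-to-Morse maps in the second step is delicate as well, but is essentially the analysis already carried out for Theorem~\ref{mainthmsmplehomeqnts}.
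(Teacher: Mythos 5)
Your overall architecture (count intersection points as Floer generators, identify the simple homotopy type of the Floer complex with the twisted Morse complex of $L$ up to a shift by $\OP{index}(Q)$, then apply Proposition~\ref{prop:morse}) matches the paper's, but the step where you establish the simple homotopy equivalence has a genuine gap. You propose to interpolate from $H_s$ to the split Hamiltonian $\varepsilon f+Q$ and assert that the resulting \emph{continuation maps} are simple homotopy equivalences ``by the action-filtration argument showing the continuation maps to be filtered with invertible diagonal part --- the same mechanism used in the proof of Theorem~\ref{mainthmsmplehomeqnts}.'' This is not the mechanism used there: Theorem~\ref{mainthmsmplehomeqnts} obtains simple homotopy equivalence from Sullivan's bifurcation-analysis invariance (Theorem~\ref{thm:invariance}), which applies only to \emph{compactly supported} Hamiltonian isotopies, not from continuation maps. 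An action-filtration/``upper triangular with identity diagonal'' argument only works when the two Hamiltonians are close enough that the continuation solutions have small energy; your interpolation from $H_s=f_s+Q$ to $\varepsilon f+Q$ is a large, non-compactly supported deformation, and for such continuation maps control of the Whitehead torsion is exactly what is \emph{not} available --- they are only chain homotopy equivalences by standard arguments. Since the whole point of the theorem is the torsion-sensitive bound $\OP{stableMorse}(L)$ rather than a Betti-number bound, this step cannot be waved through.

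The paper's proof is organised precisely to avoid this: Lemma~\ref{lem:first} confines the intersection points to a compact set; Lemma~\ref{lem:second} replaces $H_s$ by a Hamiltonian $G_s=g_s+Q$ that is homogeneous at infinity \emph{without changing the time-one intersection points} (so one never has to define Floer theory for the possibly wild-at-infinity $\phi^1_{H_s}(L\times\R^k)$, which you yourself flag as the main analytic obstacle); and Lemma~\ref{lem:third} rescales $Q$ at infinity to produce $\widetilde G_s$ whose time-one image has the same intersection points but is \emph{compactly supported} Hamiltonian isotopic to the small push-off $\phi^\epsilon_{G_s+h_s}(L\times\R^k)$. Only then is Theorem~\ref{thm:invariance} applied, and the identification of the small-push-off Floer complex with $CM_{\bullet-\OP{index}(Q)}(L,f;\Z[\pi_1(L)])$ is done Morse-theoretically (Lemma~\ref{lem:forth} together with Proposition~\ref{prop:morsecomplex}) rather than via a Floer K\"unneth formula. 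To repair your argument you would either need to prove a torsion-preservation statement for these large continuation maps (not in the literature cited here), or reduce, as the paper does, to a compactly supported isotopy by first normalising the Hamiltonian at infinity while keeping track of the intersection points.
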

\begin{rem}
Damian's examples in \cite{OTSMNOACM} (see Theorem \ref{thm:damian}) can be used to produce a Hamiltonian for which $(L \times \R^k) \pitchfork \phi^1_{H_s}(L \times \R^k)$ is strictly less than the Morse number of $L$, given that $k \gg 0$ is sufficiently large.
\end{rem}

We also get the following two theorems which are consequences of Theorem~\ref{mainthmsmplehomeqnts} together with the algebraic machinery developed by Ono and Pajitnov in \cite{OTFPOAHDIPOFG}. For a finitely presented group $G$, we denote by $d(G) \in \Z_{\ge 0}$ the minimal number of generators of $G$.

\begin{thm}\label{intrineqabssi}
Let $\mu_{L_\Lambda}=0$. Assume that $\pi_1(L_{\Lambda})$ admits a finite epimorphic image $G$, which is a simple or solvable group.
\begin{itemize}
\item[(i)]
Under the above assumptions, we have
$$|\mathcal Q(\Lambda)|\geq d(G) + \sum_{i\neq 1} {b_i(L_{\Lambda};\F)};$$
\item[(ii)]
If moreover $\pi_1(L_{\Lambda})$ is a finite perfect group, then $$|\mathcal Q(\Lambda)|\geq d(G) + \sum_{i\neq 1} b_i(L_{\Lambda};\F) + 2.$$
\end{itemize}
Here we have to use the field $\F=\Z_2$ unless $L_\Lambda$ is spin, in which case it can be chosen arbitrarily.
\end{thm}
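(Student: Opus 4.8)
The plan is to deduce Theorem~\ref{intrineqabssi} from Theorem~\ref{mainthmsmplehomeqnts} by feeding the resulting equivariant complex into the algebraic lower bounds of Ono--Pajitnov \cite{OTFPOAHDIPOFG}, which implement the invariants of Sharko. Since $\mu_{L_\Lambda}=0$ all the complexes below are honestly $\Z$-graded, so no grading subtleties intervene.

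First I would apply Theorem~\ref{mainthmsmplehomeqnts}. When $L_\Lambda$ is spin, part (i) gives a simple homotopy equivalence of $\Z[\pi]$-complexes $CM_\bullet(L_\Lambda,f;\Z[\pi]) \simeq (\Z[\pi]\langle\mathcal{Q}_{n-\bullet}(\Lambda)\rangle,\partial)$, which after base change along $\Z[\pi]\to\F[\pi]\to\F[G]$ (for any field $\F$) becomes a homotopy equivalence of finite free $\F[G]$-complexes; when $L_\Lambda$ is not spin, part (ii) with $R=\Z_2$ applied to the fixed quotient $G$ directly supplies such a homotopy equivalence over $\F[G]$ with $\F=\Z_2$. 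In all cases one obtains a finite complex $D_\bullet:=(\F[G]\langle\mathcal{Q}_{n-\bullet}(\Lambda)\rangle,\partial)$ of free $\F[G]$-modules, homotopy equivalent to $CM_\bullet(L_\Lambda,f;\F[G])$; the latter is the Morse (hence cellular) chain complex over $\F$ of the $G$-cover $\widetilde{L}_\Lambda^G\to L_\Lambda$ determined by $\pi\twoheadrightarrow G$, so that $H_\bullet(D_\bullet)\cong H_\bullet(\widetilde{L}_\Lambda^G;\F)$ as $\F[G]$-modules, $D_\bullet\otimes_{\F[G]}\F$ computes $H_\bullet(L_\Lambda;\F)$, and — since $\Lambda$ is chord-generic — the total $\F[G]$-rank of $D_\bullet$ equals $|\mathcal{Q}(\Lambda)|$. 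Because $df(\partial_t)>0$ at infinity, this Morse complex computes the absolute homology $H_\bullet(L_\Lambda;\F)=H_\bullet(\overline{L_\Lambda};\F)$, so the Betti numbers appearing in the statement are the expected ones.

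Next I would invoke the algebraic machinery of \cite{OTFPOAHDIPOFG}. Exploiting the representation theory of the finite simple, respectively solvable, group $G$, they establish that any finite free $\F[G]$-complex homotopy equivalent to $CM_\bullet(L_\Lambda,f;\F[G])$ — equivalently, realizing this $\F[G]$-homology compatibly with the surjection $\pi_1(L_\Lambda)\twoheadrightarrow G$ — must have total rank at least $d(G)+\sum_{i\neq1}b_i(L_\Lambda;\F)$; the term $d(G)$ reflects that the degree-$0$ and degree-$1$ part of such a complex must $G$-equivariantly build the trivial module $\F=H_0(\widetilde{L}_\Lambda^G;\F)$ and detect a generating set of $G$, so that the augmentation ideal of $\F[G]$ occurs as a subquotient, while in the remaining degrees the elementary bound $\operatorname{rank}_{\F[G]}D_i\ge b_i(L_\Lambda;\F)$ is used. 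Applying this to $D_\bullet$ gives part (i). For part (ii), when $\pi_1(L_\Lambda)$ is moreover finite perfect, the corresponding refinement of Sharko's invariant in \cite{OTFPOAHDIPOFG} — reflecting a secondary obstruction governed by the Schur multiplier of $G$ that forces two extra free generators in homological degrees $2$ and $3$ — improves the bound by $2$, yielding the stated inequality. The restriction to $\F=\Z_2$ in the non-spin case is inherited directly from Theorem~\ref{mainthmsmplehomeqnts}(ii).

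The step I expect to be the main obstacle is not conceptual but a matter of carefully matching hypotheses: one must verify that the homotopy equivalence produced by Theorem~\ref{mainthmsmplehomeqnts} is exactly the input the algebraic theorems of \cite{OTFPOAHDIPOFG} demand — a homotopy equivalence of \emph{based free} complexes over the correct group ring, compatible with the identification of the underlying space with $L_\Lambda$ and with the surjection $\pi_1(L_\Lambda)\to G$ — and, relatedly, that weakening the simple homotopy equivalence of part (i) to an ordinary homotopy equivalence is harmless here, since $d(G)$ and the Betti numbers are homotopy invariants and the relevant Sharko-type quantities are stable. One should also confirm that the presence of the boundary $\partial\overline{L_\Lambda}=\Lambda$, and the resulting non-compactness of $L_\Lambda$, does not affect the applicability of the algebraic bounds, which only see the finite free $\F[G]$-chain complex attached to $(\overline{L_\Lambda},\pi_1\twoheadrightarrow G)$ together with its $\F$-homology.
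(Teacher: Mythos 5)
Your overall strategy is the one the paper uses: feed the $G$-equivariant Reeb-chord complex supplied by Theorem~\ref{mainthmsmplehomeqnts} (with grading in $\Z$ since $\mu_{L_\Lambda}=0$) into the Ono--Pajitnov machinery, and count generators degree by degree -- the Morse-degree-$1$ generators (chords of degree $n-1$) giving $c_{n-1}\ge d(G)$ for $G$ finite simple or solvable, and the elementary bound $c_{n-i}\ge b_i(L_\Lambda;\F)$ in the remaining degrees. This is exactly how part (i) is proved in the paper (via Proposition~\ref{firstineqfordanddeltafinim}), and your reduction to the $\F[G]$-complex, including the remark that only the equivariant homotopy type is needed, is fine for that part.

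For part (ii), however, the input you invoke does not match what \cite{OTFPOAHDIPOFG} provides, and as written the argument has a gap. The extra $+2$ comes from \cite[Corollaries 3.20 and 3.28]{OTFPOAHDIPOFG}, recorded here as Proposition~\ref{secineqfornmin2usdirref}: when $\pi_1(L_\Lambda)$ is finite one has $c_{n-2}\ge \delta(\pi_1(L_\Lambda))-\dim H_1(L_\Lambda;\F)+\dim H_2(L_\Lambda;\F)$, and when $\pi_1(L_\Lambda)$ is moreover perfect this yields $c_{n-2}\ge \dim H_2(L_\Lambda;\F)+2$; both extra generators live in chord degree $n-2$, and the relevant group is $\pi_1(L_\Lambda)$ itself, through the $\pi_1$-equivariant (universal-cover) complex -- not a ``Schur multiplier of $G$'' obstruction in degrees $2$ and $3$ applied to the $\F[G]$-reduction. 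The distinction matters: if $\pi_1(L_\Lambda)$ is perfect and the chosen simple-or-solvable quotient $G$ happens to be solvable, then $G$ is trivial, $\F[G]=\F$, and the untwisted complex you work with retains no fundamental-group information whatsoever, so no $+2$ can be extracted from it -- yet the theorem (with $d(G)=0$) still asserts the improved bound. The repair stays entirely within your framework: keep the $\Z[\pi]$-equivariant complex from Theorem~\ref{mainthmsmplehomeqnts}(i) (or the $\Z_2[\pi]$-complex from part (ii) with $G=\pi_1$ in the non-spin case) for the degree-$2$ estimate, and use the $G$-reduction only for the degree-$1$ estimate $c_{n-1}\ge d(G)$; since the two estimates concern different degrees they add, which is precisely how the paper concludes.
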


\begin{thm}\label{intrineqrelsimaslnonz}
Assume that $\pi_1(L_{\Lambda})$ admits a finite epimorphic image $G$, which is a simple or solvable group.
\begin{itemize}
\item[(i)]
Under the above assumptions, we have
$$|\mathcal Q(\Lambda)|\geq \max (1,d(G)-1) + \sum_{i\neq 1} b_i(L_{\Lambda};\F),$$
where $i\in \mathbb Z/\mu_{L_\Lambda} \mathbb Z$;
\item[(ii)]
If moreover $\mu_{L_\Lambda}\geq 2n+2$, then $$|\mathcal Q(\Lambda)|\geq d(G) + \sum_{i\neq 1} b_i(L_{\Lambda};\F),$$
where $i\in \mathbb Z/\mu_{L_\Lambda} \mathbb Z$.
\end{itemize}
Here we have to use the field $\F=\Z_2$ unless $L_\Lambda$ is spin, in which case it can be chosen arbitrarily.
\end{thm}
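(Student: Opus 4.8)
The plan is to deduce the theorem from Theorem~\ref{mainthmsmplehomeqnts}(ii) by feeding its conclusion into the algebraic estimates of Ono--Pajitnov \cite{OTFPOAHDIPOFG}, exactly as Theorem~\ref{intrineqabssi} is obtained in the $\mu_{L_\Lambda}=0$ case; the only new work is the bookkeeping forced by the periodic grading. First I would fix $\F=\Z_2$, or an arbitrary field when $L_\Lambda$ is spin, and apply Theorem~\ref{mainthmsmplehomeqnts}(ii) with $R=\F$ and with the prescribed finite simple-or-solvable quotient $G$ of $\pi_1(L_\Lambda)$. This identifies $|\mathcal{Q}(\Lambda)|=\sum_{i\in\Z/\mu_{L_\Lambda}\Z}|\mathcal{Q}_i(\Lambda)|$ with the total $\F[G]$-rank of a finitely generated free $\Z/\mu_{L_\Lambda}\Z$-graded complex $C_\bullet$ over $\F[G]$ that is $G$-equivariantly homotopy equivalent to the $\Z/\mu_{L_\Lambda}\Z$-folding of the twisted Morse complex $(CM_\bullet(L_\Lambda,f;\F[G]),\partial_f)$. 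Hence the homology of $C_\bullet$ is the periodic $\F[G]$-module $\bigoplus_{j\equiv\bullet}H_j(L_\Lambda;\F[G])$, i.e.\ the homology of the $G$-cover of $L_\Lambda$ attached to $\ker(\pi_1(L_\Lambda)\twoheadrightarrow G)$ with its deck action, which in folded degree $0$ is the trivial module $\F$ because $L_\Lambda$ is connected. (Tensoring $C_\bullet$ with $\F$ over $\F[G]$ turns it into a free $\F$-complex of the same total rank computing $\bigoplus_{j\equiv\bullet}H_j(L_\Lambda;\F)$, which already yields the weaker bound $|\mathcal{Q}(\Lambda)|\ge\sum_{i\in\Z/\mu_{L_\Lambda}\Z}b_i(L_\Lambda;\F)$.)

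The heart of the proof is then the purely algebraic claim that such a free periodic $\F[G]$-complex has total rank at least $\max(1,d(G)-1)+\sum_{i\ne1}b_i(L_\Lambda;\F)$, and at least $d(G)+\sum_{i\ne1}b_i(L_\Lambda;\F)$ when $\mu_{L_\Lambda}\ge 2n+2$. In (folded) degrees $0$ and $1$ one cannot get away with $b_0=1$ and $b_1$ generators, because realising the trivial module $\F$ forces its first syzygy, the augmentation ideal $I_G\subset\F[G]$, to be generated, and the hypothesis that $G$ is finite and simple or solvable guarantees --- via the results invoked in \cite{OTFPOAHDIPOFG} --- that $I_G$ has no generating set with fewer than $d(G)$ elements and that the relevant Sharko-type invariants suffer no loss under stabilisation. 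Combining this with the Morse inequalities in the remaining degrees, arranged so as not to double count, reproduces the bound $d(G)+\sum_{i\ne1}b_i$ in the $\Z$-graded case, i.e.\ Theorem~\ref{intrineqabssi}(i).

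To transfer this to the $\Z/\mu_{L_\Lambda}\Z$-graded setting I would rerun the Ono--Pajitnov/Sharko estimates for cyclically graded free complexes. When $\mu_{L_\Lambda}\ge 2n+2=2\dim L_\Lambda$, the degrees carrying the homology of $\overline{L_\Lambda}$ --- degrees $0$ through $n+1$ --- together with the degrees into which Poincaré--Lefschetz duality and the resolution of the trivial module push the extra generators, remain pairwise distinct modulo $\mu_{L_\Lambda}$, so the folded complex is forced to behave like an honest non-negatively graded complex in that range and the bound $d(G)+\sum_{i\ne1}b_i$ survives, giving part~(ii). With no control on $\mu_{L_\Lambda}$, on the other hand, the low degrees $0,1,2$ may coincide after folding with the top degrees, and an adversary can then let one of the $d(G)$ generators double as a generator that is in any case forced by the top-dimensional homology; one can nonetheless always guarantee the single generator needed to carry $H_0=\F$, which gives $\max(1,d(G)-1)$ and hence part~(i). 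The main obstacle is precisely this last point --- showing that folding costs at most one of the $d(G)$ generators in general and none once $\mu_{L_\Lambda}\ge 2n+2$ --- which is where all the combinatorial care goes; the symplectic content is entirely packaged in Theorem~\ref{mainthmsmplehomeqnts}(ii).
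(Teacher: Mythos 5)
Your proposal is correct and follows essentially the same route as the paper: apply Theorem~\ref{mainthmsmplehomeqnts}(ii) to reduce to an algebraic statement about finitely generated $\Z/\mu_{L_\Lambda}\Z$-graded free $G$-equivariant complexes, then invoke the Ono--Pajitnov machinery (Propositions~\ref{fthgabwithoutassumption} and \ref{sthsdjjkdfjwithoutas}, which are reformulations of the estimates of \cite[Sections~4--5.2]{OTFPOAHDIPOFG}), using $d(G)=\delta(G)$ for finite simple or solvable $G$ to rewrite the bounds. Your narrative about the augmentation-ideal generators in folded degree~$1$, the possible collision of degrees when $\mu_{L_\Lambda}$ is small (costing at most one generator, hence $\max(1,d(G)-1)$), and the absence of such collisions once $\mu_{L_\Lambda}\ge 2n+2$ is precisely the content that those cited propositions encapsulate.
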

Note that the estimates presented in Theorems~\ref{intrineqabssi} and \ref{intrineqrelsimaslnonz} are in general weaker than the estimate described in Corollary~\ref{maininequalitystableMorsenumberofafilling}. On the other hand, the estimates from Theorems~\ref{intrineqabssi} and \ref{intrineqrelsimaslnonz} hold in the less restrictive settings compared to the settings of Corollary~\ref{maininequalitystableMorsenumberofafilling}. Theorems~\ref{intrineqabssi} and \ref{intrineqrelsimaslnonz} will be proven in Section~\ref{estimPajOnogensection}.

In Section~\ref{constrproductanddouble}, we provide a construction of exact Lagrangian fillings with a given finitely presented fundamental group. This  leads to examples where the estimate described in the second part of Theorem~\ref{intrineqabssi} coincides with the stable Morse number of an exact Lagrangian filling and, moreover, such that this bound is better than the estimate coming from the homological data of the filling. Finally in Section \ref{solvableexamplesfarfromseidelsiso}, we provide a series of examples of exact Lagrangian fillings for which the estimates for the number of Reeb chords provided by Theorems~\ref{intrineqabssi} and \ref{intrineqrelsimaslnonz} are arbitrary far from the estimates coming from the so-called Seidel's isomorphism in Theorem \ref{thm:seidel} (i.e.~coming from homological data of the filling).

\subsection{Previous results obtained using wrapped Floer homology}
\label{sec:wrapped}

It was previously known that a Legendrian submanifold $\Lambda \subset (P \times \R,dz+\theta)$ admitting an exact Lagrangian filling $L_\Lambda$ in the symplectisation satisfies a stronger form of the above Arnold-type bound. It should also be noted that, in this case, the bound is in fact true also without the assumption of horizontal displaceability of $\Lambda$. Namely, as outlined in \cite[Conjecture 1.2]{RSFTLLCHALFC} and later developed in \cite{LPPTTSOPTRAA} and \cite{OHRAFOELC} by the first author and by both authors, respectively, the number of Reeb chords for such a chord-generic Legendrian submanifold $\Lambda$ is at least $\sum_i b_i(L_\Lambda;\F)$. (Recall that there is an inequality $\sum_i b_i(L_\Lambda;\F) \ge \frac{1}{2} \sum_i b_i(\Lambda;\F)$.) In the case of exact Lagrangian fillings inside a more general subcritical Weinstein domain, this result was proven by Ritter in \cite[Theorem 11.1]{TQFTSOSC}.

These results are all proven using roughly the same idea, based upon computations of the wrapped Floer homology of the filling which in these cases is acyclic. Wrapped Floer homology, originally defined in \cite{AOSAOVF} by\linebreak Abouzaid-Seidel and in \cite{RSFTLLCHALFC} in a different form by Ekholm, generalises Floer's original Lagrangian intersection Floer homology \cite{MorseTheoryLagr} to the setting of exact Lagrangian fillings. Note that wrapped Floer homology is  always acyclic in our setting, since the exact Lagrangian fillings considered here are displaceable in the appropriate sense.

Since the argument in the proof of the above bound is the starting point of the method that we will be using here, we now give a brief outline:

First, the wrapped Floer homology computed for the pair $(L_\Lambda,L_\Lambda')$, where $L_\Lambda'$ is any Hamiltonian push-off of $L_\Lambda$, is acyclic. For a suitable push-off $L_\Lambda'$, the wrapped Floer complex can thus be made into an acyclic mapping cone of a chain map from the Morse homology complex of $L_\Lambda$, as follows from Floer's original computation, to a subcomplex whose underlying vector space is given by $\F\langle \mathcal{Q}_{n-\bullet}(\Lambda)\rangle$. This acyclic mapping cone gives rise to the so-called Seidel's isomorphism:
\begin{thm}[Seidel]
\label{thm:seidel}
Let $\Lambda$ be a Legendrian submanifold of $P\times \R$ with the property that $\Lambda$ admits an exact Lagrangian filling $L_\Lambda$. Then there is a quasi-isomorphism
$$H^\bullet(\overline{L_{\Lambda}},\partial \overline{L_\Lambda};\F) \xrightarrow{\simeq} HF^{+\infty}_\bullet(L_\Lambda;\F),$$
where the right-hand side is the homology of a complex with underlying vector space $\F \langle \mathcal{Q}_{\bullet-1}(\Lambda) \rangle$. In the case when $\OP{char}\F \neq 2$, we must assume that $L_{\Lambda}$ is spin and choose an appropriate spin structure.
\end{thm}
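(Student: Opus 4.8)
The plan is to deduce the quasi-isomorphism from the vanishing of the wrapped Floer homology of $L_\Lambda$ together with an action filtration on the wrapped complex, following Seidel's construction of the analogous exact sequence and adapting it to exact Lagrangian fillings inside symplectisations as in \cite{RSFTLLCHALFC,LPPTTSOPTRAA,OHRAFOELC}. The starting point is that $HW_\bullet(L_\Lambda,L_\Lambda;\F)=0$: in the situation at hand the filling is displaceable in the appropriate sense inside $(\R\times P\times\R,d(e^t\alpha))$ --- its compact part can be pushed off itself by a compactly supported Hamiltonian isotopy of the symplectisation, while the behaviour along the cylindrical end is governed by the admissible linear-at-infinity wrapping Hamiltonian --- so $HW_\bullet$ may be computed using a displacing isotopy and hence vanishes by the standard displacement-energy argument, as carried out in \cite{LPPTTSOPTRAA,OHRAFOELC}.

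Next I would realise $HW_\bullet(L_\Lambda,L_\Lambda;\F)$ by the Floer complex $CF_\bullet(L_\Lambda,L_\Lambda';\F)$ of a carefully chosen Hamiltonian push-off $L_\Lambda'$: over the compact part $\overline{L_\Lambda}$ we take $L_\Lambda'$ to be $C^2$-close to the graph of $\varepsilon\,df$ for a Morse function $f\co L_\Lambda\to\R$ with $df(\partial_t)>0$ outside a compact set, and near the cylindrical end we take $L_\Lambda'$ to be the image of $L_\Lambda$ under a positive wrapping large enough to produce one transverse intersection point for every Reeb chord of $\Lambda$. The points of $L_\Lambda\cap L_\Lambda'$ then split into one generator of action $O(\varepsilon)$ for each critical point of $f$, sitting over the compact part, together with one generator $\widehat{c}$ of action bounded below by $\min_c\ell(c)>0$ for each Reeb chord $c\in\mathcal{Q}(\Lambda)$, sitting near infinity; a Conley-Zehnder index computation shows that the latter span the graded vector space $\F\langle\mathcal{Q}_{n-\bullet}(\Lambda)\rangle$. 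An action argument shows that the generators $\widehat{c}$ span a subcomplex $\mathcal{C}$ of $CF_\bullet(L_\Lambda,L_\Lambda';\F)$, while a standard small-energy argument --- regularity of the low-action Floer strips and their identification with the negative gradient trajectories of $f$, the non-compact end being controlled by a maximum principle --- identifies the quotient complex with the Morse complex $(CM_\bullet(L_\Lambda,f;\F),\partial_f)$. Thus $CF_\bullet(L_\Lambda,L_\Lambda';\F)$ is the mapping cone of a connecting chain map $\Phi\co CM_\bullet(L_\Lambda,f;\F)\to\mathcal{C}$, and by the previous paragraph this cone is acyclic.

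To conclude, acyclicity of the cone is equivalent to $\Phi$ being a quasi-isomorphism. On homology its source is $H_\bullet\bigl(CM_\bullet(L_\Lambda,f;\F)\bigr)\cong H_\bullet(\overline{L_\Lambda};\F)$ --- using $df(\partial_t)>0$, so that the sublevel sets of $f$ exhaust $\overline{L_\Lambda}$ up to homotopy --- which by Poincar\'e--Lefschetz duality equals $H^{n-\bullet}(\overline{L_\Lambda},\Lambda;\F)$; its target $H_\bullet(\mathcal{C})$ is by definition $HF^{+\infty}_\bullet(L_\Lambda;\F)$, with underlying space $\F\langle\mathcal{Q}_{\bullet-1}(\Lambda)\rangle$ after the reindexing forced by the degree shift of the mapping cone. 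Matching up the gradings yields the claimed quasi-isomorphism $H^\bullet(\overline{L_\Lambda},\partial\overline{L_\Lambda}=\Lambda;\F)\xrightarrow{\simeq}HF^{+\infty}_\bullet(L_\Lambda;\F)$. When $\OP{char}\F\neq 2$, the coherent orientation of the Floer moduli spaces --- hence the very definition of these complexes over $\F$ rather than only over $\Z_2$ --- requires a spin structure on $L_\Lambda$, which is where that hypothesis enters.

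I expect the main obstacle to be the identification carried out in the second paragraph: showing that the low-action part of the wrapped complex genuinely computes the singular homology of the compact part of the filling, and that the high-action generators are in graded bijection with the Reeb chords. This rests on the transversality and compactness theory for pseudoholomorphic strips with one Lagrangian boundary condition running off to $t=+\infty$ along the cylindrical end, and in particular on the $C^0$-estimate that prevents such strips from escaping to infinity. By comparison, the vanishing of $HW_\bullet$ and the purely homological-algebra passage to the cone are routine once these analytic foundations are in place.
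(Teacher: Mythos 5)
Your proposal follows essentially the same route as the paper, which itself only sketches this argument and defers the details to \cite{OHRAFOELC}: acyclicity of the wrapped Floer complex of the pair $(L_\Lambda,L_\Lambda')$ via displaceability, realised as a mapping cone of a chain map from the Morse complex of $L_\Lambda$ (low-action generators, Floer's computation) to the subcomplex spanned by the Reeb-chord generators near infinity, so that acyclicity of the cone forces this map to be a quasi-isomorphism. The only blemish is cosmetic: since $\dim L_\Lambda=n+1$, Poincar\'e--Lefschetz duality gives $H_\bullet(\overline{L_\Lambda};\F)\cong H^{n+1-\bullet}(\overline{L_\Lambda},\Lambda;\F)$ rather than your $H^{n-\bullet}$, which is absorbed in the grading conventions anyway.
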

We refer to \cite{OHRAFOELC} for a proof in the setting under consideration here, based upon the ideas in \cite{RSFTLLCHALFC} and \cite{LPPTTSOPTRAA}.

In particular, the above isomorphism implies that the number of Reeb chords on $\Lambda$ is at least $\sum_i b_i(L_\Lambda;\F)$, given that $\Lambda$ is chord-generic. Our main result Theorem~\ref{mainthmsmplehomeqnts} can be interpreted as an upgrade of Seidel's isomorphism to a \emph{simple homotopy equivalence}.

Related ideas were also present in \cite[Theorem 4.7]{FHALC}, where the authors together with Chantraine and Ghiggini used wrapped Floer homology with twisted coefficients in order to show that a Legendrian submanifold in $P \times \R$ with a single Reeb chord satisfies the property that any of its exact Lagrangian fillings must be contractible.

\subsection*{Acknowledgements}
We would like to thank Fran\c{c}ois Charette and Jarek Kedra for very helpful conversations and interest in our work.
In addition, we are grateful to the referee of an earlier version of this paper for many valuable comments and suggestions.
The first author is supported by the grant KAW 2013.0321 from the Knut and Alice Wallenberg Foundation. The second author is supported by the ERC Advanced Grant LDTBud. A part of this work was done during a visit of the authors to the Institut Mittag-Leffler (Djursholm, Sweden).

\section{Preliminaries}
\subsection{Basics from symplectic and contact topology}
\label{sec:gendefs}
By a {\em Liouville manifold} we mean a pair $(P,\theta)$ consisting of an even-\linebreak dimensional smooth manifold $P$ and a one-form $\theta \in \Omega^1(P)$ for which $d\theta$ is a symplectic form, i.e.~for which $(d\theta)^{\wedge \dim P/2}$ is a volume form on $P$. For us, a Liouville manifold will moreover always have a cylindrical convex (sometimes called positive) non-compact end. In other words, we will assume that $(P,d\theta)$ is of the form $((0,+\infty) \times Y, d(e^s\alpha_Y))$ in the complement of a compact sub-domain with smooth boundary. Here $s$ is the standard coordinate on the $(0,+\infty)$-factor and $\alpha_Y \in \Omega^1(Y)$ is a one-form on $Y$. (The latter exact symplectic manifold is the half of the the symplectisation of the closed contact manifold $(Y,\alpha_Y)$.) Recall that a Liouville manifold possesses the so-called \emph{Liouville vector field} $X \in \Gamma TP$ which is $d\theta$-dual to $\theta$, i.e. satisfying the equation $i_{X}d\theta=\theta$.

Given an exact symplectic $2n$-manifold $(P,d\theta)$, we define its {\em contactisation} to be $(P\times \R, dz + \theta)$, where $z$ is a coordinate on the $\R$-factor. It is not difficult to see that $\alpha:=dz + \theta$ satisfies $\alpha\wedge (d\alpha)^{\wedge n}\neq 0$, and hence $\alpha$ is a {\em contact form} on $P\times \R$ which defines a {\em contact structure} $\xi:=\ker \alpha \subset T(P \times \R)$.

An $n$-dimensional submanifold $\Lambda\subset P\times \R$ is called {\em Legendrian} given that $T\Lambda \subset \xi$, and a smooth $1$-parameter family of Legendrian submanifolds is called a {\em Legendrian isotopy}.

The {\em Reeb vector field} $R_{\alpha}$ on $P \times \R$ is uniquely determined by the equations $i_{R_{\alpha}}\alpha=1$, $i_{R_{\alpha}}d\alpha=0$, and is in this case given by $R_\alpha=\partial_z$. A non-trivial integral curve of $R_\alpha$ having endpoints on a Legendrian submanifold $\Lambda$ is called a \emph{Reeb chord} on $\Lambda$, the set of which will be denoted by $\mathcal{Q}(\Lambda)$. We define the \emph{length} of a Reeb chord $c \in \mathcal{Q}(\Lambda)$ to be $\ell(c):=\int_c dz>0$. In this case, Reeb chords are obviously in bijective correspondence with the double-points of the image of $\Lambda$ under the canonical projection to $P$. We call a Legendrian submanifold \emph{chord-generic}  given that this projection is a generic immersion. In particular, a closed chord-generic Legendrian submanifold has a finite number of Reeb chords in the current setting. Observe that any Legendrian submanifold can be made chord-generic after an arbitrarily $C^\infty$-small perturbation through Legendrian submanifolds.

The {\em symplectisation} of the contactisation $(P\times \R, \alpha)$ is the exact symplectic manifold $(\R\times P\times \R, d(e^t\alpha))$, where $t$ denotes the standard coordinate on the first $\R$-factor. An \emph{exact Lagrangian filling} inside the symplectisation is a central object of this article. It is a special case of the following more general concept.
\begin{defn}
Given two Legendrian submanifolds $\Lambda_{-},\Lambda_{+}\subset (P\times \R, \alpha)$, we call a proper embedding $L \subset (\R\times P\times \R, d(e^t\alpha))$ an \emph{exact Lagrangian cobordism from $\Lambda_{-}$ to $\Lambda_{+}$} given that there exists a number $T> 0$ for which:
\begin{itemize}
\item $L\cap ((-\infty,-T] \times P\times \R) = (-\infty,-T] \times \Lambda_{-}$;
\item $L\cap ([T, \infty) \times P\times \R) = [T, \infty) \times \Lambda_{+}$;
\item $L\cap ([-T, T] \times P\times \R)=:\overline{L}$ is a compact manifold with boundary\linebreak $\partial\overline{L}=\Lambda_-\sqcup\Lambda_+$; and
\item The pull-back of $e^t(dz+\theta)$ to $L$ is exact and, moreover, admits a primitive which is globally constant on $L \cap \{ t \le -T \}$ as well as on $L \cap \{ t \ge T \}$.
\end{itemize}
$\Lambda_{+}$ is called the  {\em positive end of $L$}, and $\Lambda_{-}$ is called the {\em negative end of $L$}. If $\Lambda_-=\emptyset$, we call $L$ an \emph{exact Lagrangian filling} of $\Lambda_+$.
\end{defn}
Observe that the property of admitting an exact Lagrangian filling is a Legendrian isotopy invariant, as shown in \cite{LCOLK}. Indeed, a Legendrian isotopy from $\Lambda$ to $\Lambda'$ induces an exact Lagrangian cobordism from $\Lambda$ to $\Lambda'$. Furthermore, two exact Lagrangian cobordisms $L_a, L_b \subset \R \times P \times \R$ from $\Lambda^-_a$ to $\Lambda$ and from $\Lambda$ to $\Lambda^+_b$, respectively, can be concatenated to form an exact Lagrangian cobordism $L_a \odot L_b \subset \R \times P \times \R$ from $\Lambda^-_a$ to $\Lambda^+_b$.

\subsection{Floer homology with twisted coefficients}
The Floer homology of an exact Lagrangian manifold $L$ with itself can be defined using coefficients twisted by the fundamental group as first described by Sullivan in \cite{KTIFFH}, and later by Damian in \cite{FHOTUC}  as well as Abouzaid in \cite{NLWVMCAHQ}. This construction is analogous to the definition of Morse homology with twisted coefficients.

We will rely on the formulation of Floer homology carried out in \cite{KTIFFH}. Due to the non-compact setting, we have to consider only compatible almost complex structures $J$ on $\R \times P \times \R$ which are of a particular form outside of a compact subset, see \cite{OHRAFOELC}. We start by fixing a compatible almost complex structure $J_P$ on $(P,d\theta)$ for which the standard coordinate $e^s$ on the non-compact cylindrical end $((0,+\infty) \times Y,d(e^s\alpha_Y)) \subset (P,d\theta)$ is $J_P$-convex, i.e.~so that $-d(de^s\circ J(\cdot))$ is a symplectic form tamed by $J_P$. The so-called \emph{cylindrical lift $\widetilde{J}_P$ of $J_P$} is the compatible almost complex structure on $(\R \times P \times \R,d(e^t\alpha))$ defined uniquely by the following properties:
\begin{itemize}
\item The canonical projection $\R \times P \times \R \to P$ is $(\widetilde{J}_P,J_P)$-holomorphic; and
\item The almost complex structure $\widetilde{J}_P$ is cylindrical, i.e.~it is invariant under translations of the $t$-coordinate, and satisfies $\widetilde{J}_P(\partial_t)=\partial_z$, as well as $\widetilde{J}_P(\xi)=\xi$.
\end{itemize}
The compatible almost complex structures $J$ on the symplectisation $\R \times P \times \R$ that we will be considering here will all be taken to coincide with a fixed cylindrical lift $\widetilde{J}_P$ outside of a compact subset.

In this situation, the SFT compactness theorem \cite{CRISFT}, together with the monotonicity properties for the symplectic area of pseudoholomorphic curves \cite{SPOHCIACM}, imply that Lagrangian intersection Floer homology can be defined as usual, and that invariance is satisfied for compactly supported Hamiltonian isotopies. To that end, we recall the following fact. Let $L_0 \subset \R \times P \times \R$ be an exact Lagrangian filling, let $L^s_1$, $s \in [0,1]$, be a smooth family of exact Lagrangian fillings that is fixed outside of a compact subset, and let $J_s$ be a smooth family of tame almost complex structures on $\R \times P \times \R$ all which coincide with the above almost complex structure $\widetilde{J}_P$ outside of a fixed compact subset. Further, we assume that all intersection points $L_0 \cap L^s_1$ are contained in a compact subset.
\begin{lem}[Lemma 4.1 \cite{OHRAFOELC}]
There exists a fixed compact subset $K \subset \R \times P \times \R$ that contains any $J_s$-holomorphic Floer strip having a compact image inside $\R \times P \times \R$, boundary on $L_0 \cup L_1^s$, and both punctures mapping to intersection points $L_0 \cap L_1^s$.
\end{lem}

The Floer complex can be defined in many different ways, but we will use the approach taken in \cite{KTIFFH}.  In addition, see \cite{OHRAFOELC}, \cite{FHALC} for the set-up of Floer homology in the same setting as considered here. For a pair of transversely intersecting exact Lagrangian fillings $L_0,L_1 \subset \R \times P \times \R$, the underlying module of the Floer complex will be given by
\[ CF_\bullet(L_0,L_1;R[\pi_1(L_0)]):=R[\pi_1(L_0)]\langle L_0 \cap L_1\rangle,\]
for a unital commutative ring $R$, where each generator has a well-defined grading modulo the Maslov number of $L_0 \cup L_1$, given that we have fixed a choice of Maslov potential; see Section \ref{sec:grading} below for more details.

The differential
\[ \partial \colon CF_\bullet(L_0,L_1;R[\pi_1(L_0)]) \to CF_{\bullet-1}(L_0,L_1;R[\pi_1(L_0)] )\]
is defined roughly as follows. The coefficient $\langle \partial(y),x\rangle$ is the signed count of rigid $(i,J)$-holomorphic Floer strips of the form
\[ u \colon (\{ s+it \in \C; \: s\in \R,\ t \in [0,1]\},\{t=0\},\{t=1\}) \to (\R \times P \times \R,L_0,L_1)\]
having boundary on $L_0 \cup L_1$ and asymptotics to intersection points $x$ and $y$ as $s \to -\infty$ and $+\infty$, respectively. Each such strip moreover contributes with the coefficient in $\pi_1(L_0) \subset R[\pi_1(L_0)]$ obtained by completing the path $u(s+i0)$ to a loop via the concatenation with fixed capping paths in $L_0$ that connect each intersection point in $L_0 \cap L_1$ with the base point. Recall that we must take $R=\Z_2$ in the above count unless both $L_0,L_1$ are spin. In the latter case, the signed count moreover depends on the choice of a spin structure on $L_0 \cup L_1$.

\subsubsection{The grading convention}
\label{sec:grading}
Assume that $$L_0, L_1 \subset (\R \times P \times \R,d(e^t\alpha))$$ are \emph{connected} $(n+1)$-dimensional exact Lagrangian fillings of $\Lambda_0, \Lambda_1 \subset (P \times \R,\alpha)$. We here restrict attention to the special case when $\Lambda_1$ and $\Lambda_0$ differ by a translation in the $\R$-coordinate (i.e.~by the Reeb flow). In this case, the following uniquely defined grading convention modulo the Maslov number of $L_0 \cup L_1$ will be used.

Consider the Legendrian lift of the disconnected exact Lagrangian immersion $L_0 \cup L_1$ to the contactisation $(\R \times P \times \R) \times \R$ of the symplectisation. We choose a lift where the component $L_1$ has been translated sufficiently far in the  \emph{negative} Reeb direction (of the latter contactisation $(\R \times P \times \R) \times \R$) so that all Reeb chords start on $L_1$. Recall that there is a bijective correspondence between Reeb chords $c_p$ on this lift and intersection points $p \in L_0 \cap L_1$.

Fix points $x_i \in \Lambda_i$, $i=0,1$, where $x_i$, $i=0,1$, differ by a translation of the $\R$-coordinate. For each $i=0,1$, we also fix choices of oriented paths $\gamma^{c_p}_i \subset L_i$ from the end-points of the Reeb chord $c_p$ on (the Legendrian lift of) $L_i$ to the point $(T,x_i) \in (\R \times \Lambda_i) \cap L_i$ for some $T \gg 0$ sufficiently large. For an intersection point $p \in L_0 \cap L_1 \subset CF_\bullet(L_0,L_1;R[\pi_1(L_0)])$ we then prescribe the grading
\[|p|:=n+1-\OP{CZ}(c_p)\]
for the Conley-Zehnder index as defined in \cite{NILSIRTNPO}, where the choice of (disconnected) capping path consisting of the path $\gamma^{c_p}_0$ followed by the path $-\gamma^{c_p}_1$ has been used.

We end by noting that, using the above grading conventions, the differential $\partial$ is of index $-1$.

\subsection{The stable Morse number}
We here briefly discuss the notion of the Morse and stable Morse number. We refer to \cite{OTSMNOACM} for more details concerning the closed case.

\begin{defn}
\label{defn:morse}
\hspace{-1ex}For a compact manifold $M$ possibly with non-empty boundary, we call
$$\OP{Morse}(M):=\min\limits_{f \in C^\infty(M,\R)}\{\# \OP{Crit}(f)\ | \: f \: \mbox{Morse}, \:\: df^{-1}(0) \cap \partial M = \emptyset \}$$
the {\em Morse number of $M$}.
\end{defn}

\begin{defn}
\label{defn:almostquadratic}
Let $M$ be a compact manifold possibly with non-empty boundary. A function $F:M\times \R^k\to \R$ is called {\em almost quadratic at infinity} given that there is a non-degenerate quadratic form $Q$ on $\R^k$, and a Riemannian metric $g_M$ on $M$, satisfying the properties that
\begin{itemize}
\item The norm $\|dF-dQ\|_{C^0}$ is bounded for the $C^0$-norm on $T^{\ast}(M\times \R^k)$ induced by the product Riemannian metric $g_M \oplus g_{\OP{std}}$ on $M\times \R^k$, where $g_{\OP{std}}$ denotes the Euclidean metric on $\R^k$; and
\item For the standard coordinate $t \colon (1/2,1] \times \partial M \times \R^k \to (1/2,1]$ on a collar neighbourhood $U \supset \partial M$, we have $dF(\partial_t) > 0$ on $U \times \R^k$.
\end{itemize}
\end{defn}
\begin{defn}
\label{defn:stablemorse}
For a compact manifold $M$ possibly with non-empty boundary, we call
\begin{align*}
&\quad\ \OP{stableMorse}(M)\\
&:=\min\limits_{k \ge 0 \atop F \in C^\infty(M \times \R^k,\R)}\{\# \OP{Crit}(F)\ | \: F \: \mbox{Morse and almost quadratic at infinity}\}
\end{align*}
the {\em stable Morse number of $M$}.
\end{defn}
For any compact smooth manifold $M$ with or without boundary, it immediately follows that
$$\OP{Morse}(M) \geq \OP{stableMorse}(M).$$

The Morse number and stable Morse number coincide for surfaces, the three-sphere \cite{TEFFTRFAIGA}, as well as for simply connected closed manifolds of dimension $k\geq 6$ \cite{OTSOM}.
There are examples due to Damian \cite{OTSMNOACM} of closed manifolds $M$ for which $\OP{Morse}(M)> \OP{stableMorse}(M)$. Note that, if one removes a small open ball from the examples of Damian, the proof of \cite[Theorem~1.2]{OTSMNOACM} (based upon Proposition \ref{prop:morse} below) generalises to show that:
\begin{thm}[Damian]
\label{thm:damian}
Consider the group $G:=\Pi_{i=1}^k A_m$, where $A_m$ is the alternating group on $m$ letters, $m\geq 5$, and $k \ge 0$ is sufficiently large. Then there exists a compact manifold $M$ satisfying $\pi_1(M)= G$ as well as $\OP{Morse}(M)>\OP{stableMorse}(M)$, where $M$ can be taken either with or without boundary.
\end{thm}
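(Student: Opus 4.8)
The plan is to puncture the closed examples of Damian and re-run the proof of \cite[Theorem 1.2]{OTSMNOACM} in the manifold-with-boundary setting. Recall that, for a fixed $m \ge 5$ and $k$ sufficiently large, Damian produces a closed connected manifold $M_0$ (of dimension at least six) with $\pi_1(M_0) \cong G = \prod_{i=1}^k A_m$ for which the minimal number of critical points of a Morse function is strictly larger than the minimal number of generators of a finitely generated free $\Z[G]$-complex simple homotopy equivalent to the chain complex $C_\bullet(\widetilde{M_0})$ of the universal cover; the discrepancy is produced by realising, in a suitable degree, a stably free but non-free projective $\Z[G]$-module, whose non-freeness forces an extra handle that can only be traded away after stabilising. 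The hypothesis $k \gg 0$ enters precisely in guaranteeing the existence of such a module over $\Z[\prod_{i=1}^k A_m]$, and I would take both this module-theoretic input and the manifold $M_0$ as given. One then sets $M := M_0 \setminus \OP{int}(B)$ for a small smoothly embedded closed ball $B \subset M_0$, so that $\partial M$ is a sphere; since $\dim M_0 \ge 3$, van Kampen yields $\pi_1(M) \cong \pi_1(M_0) = G$.

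The first step is to show $\OP{stableMorse}(M) \le \OP{stableMorse}(M_0) - 1$. On the algebraic side, removing $B$ deletes precisely the top cell of a one-top-cell CW structure on $M_0$, so $C_\bullet(\widetilde{M})$ is the brutal truncation of $C_\bullet(\widetilde{M_0})$ below the top degree; by the manifold-with-boundary version of \cite[Theorem 2.2]{OTSMNOACM} recorded in Proposition \ref{prop:morse}, $\OP{stableMorse}$ of each manifold is the minimal number of generators of a free $\Z[G]$-complex simple homotopy equivalent to the respective chain complex, and passing from $M_0$ to $M$ this number drops by one. On the geometric side, which is the one I would write out in detail, one starts from a Morse function $F$ on $M_0 \times \R^\ell$ that is almost quadratic at infinity and realises $\OP{stableMorse}(M_0)$; a min--max argument over the slices $M_0 \times \{v\}$ (using that $F$ differs from the quadratic form $Q$ by a bounded amount in the $\R^\ell$-directions) produces a critical point of $F$ at which $F$ restricted to the $M_0$-factor has a nondegenerate local maximum. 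After modifying $F$, without changing the number of critical points, so that near this point it is a product (a concave Morse chart on a disc in $M_0$) $+\, Q$, one deletes a small ball around the maximum. The restricted function is still Morse, the bound on $\|dF - dQ\|_{C^0}$ survives, the collar condition $dF(\partial_t) > 0$ holds because the $M_0$-factor increases toward the deleted ball, and exactly one critical point is gone.

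The second step is the complementary estimate $\OP{Morse}(M) \ge \OP{Morse}(M_0) - 1$, obtained by capping off: given a Morse function $h$ on $M$ with no critical points on $\partial M$ realising $\OP{Morse}(M)$, a modification supported in a collar of $\partial M$ (creating no new critical points) makes $\partial M$ a regular level set along which $h$ is extremal, and then gluing back $B$ with a single critical point (an extremum at its centre) extends $h$ to a Morse function on $M_0$ with $\#\OP{Crit}(h) + 1$ critical points. Putting the two steps together with Damian's strict inequality for $M_0$ gives
\[ \OP{Morse}(M) \;\ge\; \OP{Morse}(M_0) - 1 \;>\; \OP{stableMorse}(M_0) - 1 \;\ge\; \OP{stableMorse}(M), \]
which is the claim; leaving $B$ in place recovers the ``without boundary'' statement, i.e.\ Damian's original result.

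I expect the main obstacle to be exactly the verification that Damian's argument ``generalises'': one has to check that both the construction realising the stably free non-free $\Z[G]$-module and --- more delicately --- his lower bound for the \emph{unstable} handle count remain valid with $M_0$ replaced by the manifold-with-boundary $M$, with Proposition \ref{prop:morse} substituted for \cite[Theorem 2.2]{OTSMNOACM} throughout. The remaining ingredients are comparatively routine: the min--max production of the $M_0$-direction maximum, the bookkeeping that makes the puncturing compatible with the ``almost quadratic at infinity'' condition (which is what forces one to excise a ball around a maximum rather than a minimum), and the collar adjustment underlying the capping-off step.
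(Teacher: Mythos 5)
Your reduction hinges on the comparison $\OP{stableMorse}(M_0\setminus\OP{int}B)\le\OP{stableMorse}(M_0)-1$, and this is where there is a genuine gap. The algebraic justification is only asserted: although $C_\bullet(\widetilde{M_0})$ is, up to simple homotopy, $C_\bullet(\widetilde{M})$ with one extra generator in degree $n$, a complex realising the minimum for $M_0$ need not contain a distinguished degree-$n$ generator whose deletion lands in the simple homotopy class of $C_\bullet(\widetilde{M})$; what follows formally from the cell structure is only the opposite inequality $\OP{stableMorse}(M_0)\le\OP{stableMorse}(M)+1$. The geometric justification has two independent problems: first, critical points of the restrictions $F|_{M_0\times\{v\}}$ are not critical points of $F$, so the min--max over slices does not produce the desired point, and a minimiser has no reason to possess a critical point splitting as (nondegenerate maximum in the $M_0$-factor)$\,+\,Q$, since the Morse chart mixes the two factors; second, and more seriously, after deleting $B$ the almost-quadratic-at-infinity condition of Definition \ref{defn:almostquadratic} demands $dF(\partial_t)>0$ on the entire non-compact collar $(1/2,1]\times\partial B\times\R^\ell$, whereas your argument only controls a neighbourhood of the chosen critical point; at large values of the $\R^\ell$-variable $F$ is $Q$ plus a bounded function and its $M_0$-derivative has no sign, so one is forced into a further modification along all of $B'\times\R^\ell$ and must then exclude new critical points in the transition region. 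Since your final chain needs the ``$-1$'' (with only $\OP{stableMorse}(M)\le\OP{stableMorse}(M_0)$ strictness is lost), this step is essential. A smaller issue: with Definition \ref{defn:morse} as literally stated (no condition on $df$ along $\partial M$) the capping-off step also fails --- a linear function on $D^n$ has no critical points and cannot be made constant and extremal on $\partial D^n$ without creating one --- so your step 2 needs the outward-gradient convention that is implicit in Proposition \ref{prop:morse}.

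Note also that the intended argument is different and bypasses your comparison lemma entirely: one reruns Damian's proof of \cite[Theorem 1.2]{OTSMNOACM} directly on the punctured manifold. The lower bound for $\OP{Morse}(M)$ persists because a Morse function with outward gradient still induces a handle decomposition whose $1$-handles generate $\pi_1(M)=G$, so the count governed by $d(G)$ is unchanged, while the upper bound for $\OP{stableMorse}(M)$ comes from Proposition \ref{prop:morse} (the with-boundary version of \cite[Theorem 2.2]{OTSMNOACM}) applied to Damian's explicit small complex with its top generator removed, which manifestly lies in the simple homotopy class of $C_\bullet(\widetilde{M})$. Relatedly, the engine of Damian's examples is the positivity of $d(G)-\delta(G)$ for $G=\prod_{i=1}^kA_m$ (via \cite{TPROADPOFG}), not the realisation of a stably free non-free projective module; since you black-box $M_0$ this mischaracterisation is not fatal, but it matters when you identify which parts of his argument must be checked in the bounded setting.
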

Finally, the following statement holds:
\begin{lem}
\label{lem:stablemorse}
A closed smooth manifold $M$ satisfies
$$\OP{stableMorse}(M \times D^k) = \OP{stableMorse}(M)$$
for any $k \ge 0$.
\end{lem}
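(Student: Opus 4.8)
The plan is to prove the two inequalities $\OP{stableMorse}(M\times D^k)\le\OP{stableMorse}(M)$ and $\OP{stableMorse}(M\times D^k)\ge\OP{stableMorse}(M)$ separately. Throughout I identify $D^k$ with the closed unit ball of $\R^k$, write $r=|x|$ for the radial coordinate, and use the annular collar $\{1/2<r\le1\}$ of $\partial D^k=S^{k-1}$ with collar coordinate $t=r$. Since any Riemannian metric on the compact manifold $M\times D^k$ is comparable to a product metric $g_M\oplus g_{\OP{eucl}}$, and a $C^0$-bound on a one-form is preserved under comparable metrics, it is harmless to use such a product metric whenever we test the almost-quadraticity condition for a function on $M\times D^k\times\R^\ell$.

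\emph{The inequality $\le$.} Let $F\colon M\times\R^\ell\to\R$ be Morse and almost quadratic at infinity with respect to a non-degenerate quadratic form $Q$ on $\R^\ell$, and realising $\OP{stableMorse}(M)$ (the minimum is attained, and as $M$ is closed there is no collar condition to impose). Put $\tilde F(m,x,v):=F(m,v)+|x|^2$ on $M\times D^k\times\R^\ell$. Its Hessian at a critical point is block-diagonal with non-degenerate blocks, and its critical points are precisely the points $(m,0,v)$ with $(m,v)$ critical for $F$, so $\#\OP{Crit}(\tilde F)=\#\OP{Crit}(F)$. Moreover $\tilde F$ is almost quadratic at infinity, again with quadratic form $Q$ on the $\R^\ell$-factor: $d\tilde F-dQ=(dF-dQ)+d|x|^2$ is $C^0$-bounded because $d|x|^2$ is a smooth one-form on the compact factor $D^k$, and on the collar $d\tilde F(\partial_t)=\partial_r|x|^2=2r>0$. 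Hence $\OP{stableMorse}(M\times D^k)\le\#\OP{Crit}(\tilde F)=\OP{stableMorse}(M)$.

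\emph{The inequality $\ge$.} Let $\tilde F\colon M\times D^k\times\R^\ell\to\R$ be Morse and almost quadratic at infinity, realising $\OP{stableMorse}(M\times D^k)$; by the remark above we may take the relevant metric on $M\times D^k$ to be a product. By uniqueness of collar neighbourhoods, after precomposing with a diffeomorphism of $M\times D^k$ isotopic to the identity and equal to the identity near the boundary --- which affects neither $\#\OP{Crit}(\tilde F)$ nor almost-quadraticity, as $D^k$ is compact and the $\R^\ell$-factor is untouched --- we may assume that $d\tilde F(\partial_t)>0$ holds on $\{r_0<r\le1\}\times\R^\ell$ for some $r_0\in(0,1)$, with $t=r$. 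Since a Morse function that is almost quadratic at infinity has no critical points on the boundary, all critical points of $\tilde F$ then lie over $\{r\le r_0\}$. Choose a smooth $a\colon[0,1)\to\R_{\ge0}$ with $a\equiv0$ on $[0,r_0]$, $a'\ge0$, and $a(r)=\bigl(r/(1-r)\bigr)^2$ for $r$ near $1$, set $A(m,x,v):=a(|x|)$, and transport the smooth function $\tilde F+A$ on $M\times(D^k\setminus\partial D^k)\times\R^\ell$ along a radial diffeomorphism $\Phi\colon D^k\setminus\partial D^k\xrightarrow{\cong}\R^k$ that agrees with $x\mapsto x/(1-|x|)$ near $\partial D^k$, obtaining a smooth function $G$ on $M\times\R^k\times\R^\ell=M\times\R^{k+\ell}$.

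It remains to check that $G$ is Morse, almost quadratic at infinity, and satisfies $\#\OP{Crit}(G)=\#\OP{Crit}(\tilde F)$; this gives $\OP{stableMorse}(M)\le\#\OP{Crit}(G)=\OP{stableMorse}(M\times D^k)$, completing the proof. On $\{r\le r_0\}$ one has $A\equiv0$, so $\tilde F+A$ has exactly the non-degenerate critical points of $\tilde F$ there; on $\{r_0<r<1\}$ the $\partial_r$-component of $d(\tilde F+A)$ equals $d\tilde F(\partial_r)+a'(r)>0$, so there are no further critical points; transporting by $\Phi$ then shows $G$ is Morse with $\#\OP{Crit}(G)=\#\OP{Crit}(\tilde F)$. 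For the behaviour at infinity, write $|y|=|\Phi(x)|=|x|/(1-|x|)$; near $\partial D^k$, i.e.\ near $|y|=\infty$, we have $A=|y|^2$ and hence $G=(\tilde F\circ\Phi^{-1})+|y|^2$, so with the non-degenerate quadratic form $Q':=|y|^2+Q(v)$ on $\R^{k+\ell}$ we get $dG-dQ'=d(\tilde F\circ\Phi^{-1})-dQ$ there. Since $\Phi$ distorts the Euclidean metric of $\R^k$ into a metric on $D^k\setminus\partial D^k$ that dominates a fixed smooth metric near $\partial D^k$, while leaving the $\R^\ell$-factor unchanged, the bound $\|d\tilde F-dQ\|_{C^0}<\infty$ is inherited by $\|dG-dQ'\|_{C^0}$ near $|y|=\infty$, and on the complementary compact region the $C^0$-bound is immediate. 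I expect this last verification --- that the $C^0$-estimate survives the metric distortion produced by $\Phi$ --- to be the only genuinely delicate point, and it is precisely there that both the compactness of $D^k$ and the prescribed blow-up rate $a(r)\sim\bigl(r/(1-r)\bigr)^2$ are used.
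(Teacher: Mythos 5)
Your argument is correct and takes essentially the same route as the paper: stabilising by $|x|^2$ for the inequality $\OP{stableMorse}(M\times D^k)\le\OP{stableMorse}(M)$, and for the converse transporting the given function along a radial diffeomorphism $D^k\setminus\partial D^k\cong\R^k$ after adding a function that blows up like the square of the new radial coordinate. The paper's diffeomorphism $\phi\colon\R^k\to B^k$ and added term $\rho(\|\mathbf{x}\|)\|\mathbf{x}\|^2$ are precisely the inverse of your $\Phi$ and the pushforward of your $A$; your collar normalisation and metric-distortion estimate merely flesh out details the paper leaves implicit.
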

\begin{proof}
The inequality ``$\ge$'' is immediate, since a function almost quadratic at infinity on $M \times \R^N$ can be suitably stabilised to give a function almost quadratic at infinity on $(M \times D^k) \times \R^N$.

We continue with the inequality ``$\le$''. By the definition of a function $F \colon (M \times D^k) \times \R^N \to \R$ being almost quadratic at infinity, we can write it as $Q + f$ for a quadratic form $Q$ on $\R^N$ together with a function $f$ with a uniform bound on its differential.

Consider the quadratic form $Q_0(\mathbf{x}):=\|\mathbf{x}\|^2$ on $\R^k \ni \mathbf{x}$ together with a bump function $\rho \colon \R \to [0,1]$ satisfying $\rho'(t) \ge 0$, $\rho(t)=0$ for $t \le 1-2\epsilon$, while $\rho(t)=1$ for $t \ge 1-\epsilon$. Also, consider a diffeomorphism $\phi \colon \R^k \to B^k \subset D^k$ of the form $\mathbf{x} \mapsto (\sigma(\|\mathbf{x}\|)/\|\mathbf{x}\|)\mathbf{x}$, where $\sigma \colon \R_{\ge 0} \to [0,1)$ satisfies $\sigma'(t)>0$ together with $\sigma(t)=t$ for all $0 \le t \le 1-\epsilon$. It follows that the function
\[ \widetilde{F} := f \circ (\id_M,\phi,\id_{\R^N}) + \rho(\|\mathbf{x}\|) \cdot Q_0(\mathbf{x})+Q\]
is almost quadratic at infinity on $M \times \R^{k+N}$. By the assumptions on $F$, given that $\epsilon>0$ is sufficiently small, it can readily be seen that the number of critical points of $F$ and $\widetilde{F}$ agree. The inequality now follows.
\end{proof}

\subsection{Simple homotopy theory}
In the following we let $(C_\bullet,\partial)$ be a free and finitely generated chain complex over a group-ring $\Z[G]$ with a preferred graded basis. We also assume that the grading is taken in the integers $\Z$. Observe that we allow generators in negative degrees. In this setting Whitehead defined the notion of a \emph{simple homotopy equivalence} between such complexes in \cite[Section~5]{SHT}; also see Milnor's survey in \cite{WT}. Roughly speaking, two such complexes are simple homotopy equivalent if and only if they are related by stabilisation by trivial complexes, together with a simple isomorphism, i.e.~an isomorphism for which the Whitehead torsion vanishes.

Floer homology has been shown to preserve the simple homotopy type in certain situations, as first shown by Sullivan in \cite{KTIFFH}. These ideas were later successfully used in work by Su\'{a}rez in \cite{ELCAPI}; also, see the related results in \cite{QRT} by Charette. Observe that the setting considered here is similar to that in \cite{ELCAPI} in the sense that we also consider non-compact Lagrangian submanifolds. On the other hand, our setting is simpler, and we do not need the restrictions on the almost complex structure made there.

The following result due to Sullivan is central to us.
\begin{thm}(\cite{KTIFFH})
\label{thm:invariance}
Let $L_0, L_1 \subset \R \times P \times \R$ be exact Lagrangian fillings which are disjoint outside of a compact subset and intersect transversely. If $L_0$ and $L_0'$ are compactly supported Hamiltonian isotopic, it follows that the Floer complexes $(CF_\bullet(L_0,L_1;R[\pi_1(L_0)]),\partial)$ and $(CF_\bullet(L_0',L_1;R[\pi_1(L_0)]),\partial')$ are homotopy equivalent in the category of $\pi_1(L_0)$-equivariant complexes. In the case when the Maslov numbers vanish and the grading is taken in $\Z$, and both fillings are spin and $R=\Z$, then this is a simple homotopy equivalence.
\end{thm}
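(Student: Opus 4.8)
The plan is to run the standard Floer continuation argument for the homotopy-equivalence part, and then to upgrade it to a \emph{simple} homotopy equivalence by reducing, via subdivision of the Hamiltonian isotopy, to the case of a $C^2$-small perturbation, where the induced map can be analysed explicitly.

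Fix a compactly supported Hamiltonian isotopy $\{\phi^s\}_{s\in[0,1]}$ with $\phi^0=\id$ and $\phi^1(L_0)=L_0'$. First I would construct the continuation maps $\Phi\colon CF_\bullet(L_0,L_1;R[\pi_1(L_0)])\to CF_\bullet(L_0',L_1;R[\pi_1(L_0)])$ and $\Psi$ in the reverse direction in the usual way, counting rigid solutions of an $s$-dependent Floer equation with moving Lagrangian boundary $\phi^{\beta(s)}(L_0)$ for a suitable cutoff $\beta$. Since the isotopy is compactly supported it canonically identifies $\pi_1(L_0)$ with $\pi_1(L_0')$ and carries the capping-path data along, so that the group-ring coefficients of the continuation strips are well defined and the maps are $\pi_1(L_0)$-equivariant; the spin structures provide the coherent orientations when $R=\Z$, and one takes $R=\Z_2$ otherwise. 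Compactness of the relevant moduli spaces --- which is what makes these counts finite --- follows exactly as in Lemma~4.1 of \cite{OHRAFOELC}: all almost complex structures and boundary conditions in play are cylindrical and agree with the fixed data outside a compact set, so SFT compactness together with the monotonicity estimate confines every relevant strip to a fixed compact subset. The standard homotopy-of-homotopies argument then shows that $\Psi\circ\Phi$ and $\Phi\circ\Psi$ are $\pi_1(L_0)$-equivariantly chain homotopic to the respective identities, which gives the first assertion.

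For the refinement, assume $\mu=0$, the grading is taken in $\Z$, both fillings are spin and $R=\Z$; the $\Z$-grading is precisely what lets one speak of the Whitehead torsion $\tau(\Phi)\in\OP{Wh}(\pi_1(L_0))$, and one must show it vanishes. Here I would subdivide $[0,1]$ into many small subintervals so that $\{\phi^s\}$ becomes a composition of $C^2$-small Hamiltonian isotopies; since torsion is additive under composition (a composition of simple homotopy equivalences is again one), it suffices to treat a single $C^2$-small isotopy. For such an isotopy the intersection points of $\phi^1(L_0)$ with $L_1$ are $C^2$-small perturbations of those of $L_0$, except at finitely many birth--death moments where a canceling pair of generators in adjacent degrees appears or disappears. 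Away from those moments the continuation map is an isomorphism which, ordering generators by action and using capping paths that vary continuously with the isotopy (so the diagonal loops are contractible), is upper triangular with diagonal entries $\pm1$, hence a simple isomorphism; a birth--death event changes the based complex precisely by a stabilisation by a trivial two-term complex $\Z[\pi_1(L_0)]\xrightarrow{\pm1}\Z[\pi_1(L_0)]$ --- the extra terms in the differential of the new pair being removable by an elementary change of basis --- which by definition preserves the simple homotopy type. Composing over all subintervals, $\Phi$ is a simple homotopy equivalence. Equivalently, one may run Sullivan's bifurcation analysis directly, tracking handle-slides as elementary basis changes and births/deaths as elementary expansions.

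The main obstacle I expect lies in this last step: establishing the triangular normal form with $\pm1$ on the diagonal and the elementary-stabilisation description of a birth--death in the present non-compact, twisted setting. This rests on an action/energy filtration of the Floer complex --- one must verify that the energy of a nonconstant continuation strip between distinct intersection points is bounded below, which in turn uses the monotonicity and SFT-compactness input above to rule out escape to infinity --- together with a careful local model for the moduli spaces near a birth--death degeneration and the bookkeeping of $\pi_1(L_0)$-coefficients and of signs through these moves. None of this is essentially new beyond \cite{KTIFFH}, where the closed case is treated; the only genuinely new ingredient is the control of non-compactness, supplied by \cite{OHRAFOELC}.
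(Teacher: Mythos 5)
Your proposal is correct in substance and reaches the statement, but it is organised differently from the paper's own argument, which is essentially a citation: the paper derives both assertions at once from Sullivan's bifurcation-analysis proof of invariance in \cite{KTIFFH} (Corollary 3.14 for handle-slides, Lemma 3.15 for birth/deaths, and Theorem 3.12, which says that after a suitable stabilisation a generic isotopy can be perturbed into a sequence of exactly these moves), together with the compactness control of \cite{OHRAFOELC} in the non-compact setting and the orientation input of \cite{OILCHAELI} to pass from $R=\Z_2$ to $R=\Z$. You instead split the proof: continuation maps give the $\pi_1(L_0)$-equivariant homotopy equivalence (valid for any $R$ and $\Z/\mu\Z$-grading), and the simple statement is obtained by subdividing the isotopy, using the action filtration to write each small continuation map as upper triangular with diagonal entries $\pm 1$ over the trivial group element, and treating birth/death times as elementary expansions. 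This buys a cleaner separation of the two assertions and makes the vanishing of the Whitehead torsion transparent away from bifurcations, but note that it does not bypass the hard input: at a birth/death the action gap of the cancelling pair degenerates, so the filtration argument breaks down exactly there and you must import the local gluing model of \cite[Lemma 3.15]{KTIFFH} (as you acknowledge), and you also implicitly need the genericity/stabilisation statement of \cite[Theorem 3.12]{KTIFFH} to ensure the path of Lagrangians is transverse to $L_1$ except at isolated handle-slide and birth/death times. Your identification of the only genuinely new ingredient --- confinement of all relevant strips to a compact set via \cite[Lemma 4.1]{OHRAFOELC} --- matches the paper, as does your treatment of coefficients and spin structures.
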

\begin{proof}
The proof follows from the analysis used to prove \cite[Theorem 3.1]{KTIFFH}. Strictly speaking, the latter result only states that the Whitehead torsion is well-defined for an acyclic Floer complex. The main point of its proof, however, establishes an invariance proof of Floer homology based upon bifurcation analysis that leads to the sought statement.

In particular, \cite[Corollary 3.14]{KTIFFH} concerns the invariance in the case when a handle-slide occurs, while \cite[Lemma 3.15]{KTIFFH} concerns the case when a birth/death of an intersection point occurs. Both statements combine to show that the simple homotopy type of the complex is preserved under these moves. That these cases suffices follows from the techniques in \cite[Section~3.3]{KTIFFH} (in particular, see Theorem 3.12 therein). Namely, there it is established that, after taking a suitable stabilisation, we can perturb the isotopy to one consisting of a sequence of handle-slides and birth/deaths of the required form.

We end with the following remark. As written, the proof in the aforementioned paper only deals with the case $R=\Z_2$. However taking \cite[Remark~5.5]{KTIFFH} into account, together with the construction of coherent orientations in e.g.~\cite{OILCHAELI}, the general case follows as well.
\end{proof}

In order to deduce Corollary~\ref{maininequalitystableMorsenumberofafilling}, we will need the following result, which was proven in \cite{OTSMNOACM} by Damian in the case of a closed manifold.
\begin{prop}[Theorem 2.2 in \cite{OTSMNOACM}]
\label{prop:morse}
Let $M$ be a compact smooth manifold with boundary, and let $(CM_\bullet(M,f;\Z[\pi_1(M)]),\partial_f)$ be the Morse homology complex with twisted coefficients in $\Z[\pi_1(M)]$ of a Morse function $f \colon M \to \R$ for which the gradient of $f$ points outwards along the boundary. Any complex $(D_\bullet,\partial_D)$ that is simple homotopy equivalent to this Morse complex is itself of the form
\[(D_\bullet,\partial_D) = (CM_\bullet(M \times \R^k,F;\Z[\pi_1(M \times \R^k)]),\partial_F),\]
for a Morse function $F \colon M \times \R^k \to \R$ which is almost quadratic at infinity for some $k \ge 0$.
\end{prop}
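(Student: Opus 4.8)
The plan is to adapt Damian's argument from \cite[Theorem 2.2]{OTSMNOACM} to the setting of manifolds with boundary by carefully controlling everything near the boundary. The starting observation is that, by Whitehead's theory, the simple homotopy equivalence between $(CM_\bullet(M,f;\Z[\pi_1(M)]),\partial_f)$ and $(D_\bullet,\partial_D)$ can be realised by a finite sequence of elementary operations: stabilisations by elementary trivial complexes $(\cdots \to 0 \to \Z[\pi_1(M)] \xrightarrow{\OP{id}} \Z[\pi_1(M)] \to 0 \to \cdots)$, and simple (Whitehead-torsion-free, hence realisable over $\Z[\pi_1(M)]$ as a product of elementary automorphisms together with permutations and sign changes) isomorphisms. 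The task is to show that each such operation can be effected geometrically by a modification of the Morse function on a (possibly further stabilised) $M \times \R^k$, staying within the class of functions that are almost quadratic at infinity and whose gradient points outwards along $\partial M \times \R^k$.

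First I would handle stabilisation. Given a Morse function $F$ on $M \times \R^k$ almost quadratic at infinity, adding a pair of critical points of consecutive index that cancel corresponds, on the level of complexes, to stabilisation by a trivial complex; concretely one performs this cancellation inside a ball in the \emph{interior} of $M \times \R^k$, so the boundary behaviour and the quadratic-at-infinity behaviour are untouched. Conversely, passing from $M \times \R^k$ to $M \times \R^{k+1}$ by adding a non-degenerate quadratic form in the new variable does not change the critical points nor the complex, and is compatible with the definition of almost quadratic at infinity (this is essentially Lemma \ref{lem:stablemorse}, adapted to allow a trivial stabilisation of the complex simultaneously). Second, I would realise simple isomorphisms: an elementary automorphism of the based complex (adding a $\Z[\pi_1]$-multiple of one basis element to another of the same degree) is realised by a handle-slide, i.e.~by pushing one unstable manifold across another along a path carrying the prescribed element of $\pi_1$; permutations and unit multiples ($\pm g$, $g \in \pi_1$) of basis elements are realised trivially by relabelling and by choice of capping paths/orientations. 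All of these moves are supported away from $\partial M \times \R^k$ and away from the region where $F$ agrees with the quadratic form, so the two defining conditions of ``almost quadratic at infinity'' — the $C^0$-bound on $dF - dQ$ and the positivity $dF(\partial_t) > 0$ on the collar — are preserved throughout, since they only constrain $F$ outside a compact set.

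The remaining point, and the one I expect to be the main obstacle, is the global realisation statement: one must verify that after performing these local moves the resulting \emph{abstract} complex, which by construction is isomorphic to $(D_\bullet, \partial_D)$, is actually \emph{realised} by the Morse complex of the modified function — i.e.~that there are no spurious extra flow lines contributing to $\partial_F$ beyond those dictated by the combinatorics. In the closed case this is exactly the content of \cite[Theorem 2.2]{OTSMNOACM}, where one arranges a self-indexing (or at least suitably ordered) Morse function and appeals to the fact that, for a gradient-like vector field in general position, the Morse differential is determined by the attaching data of the handle decomposition; handle-slides and birth/deaths then change this data in precisely the elementary way. The adaptation to the boundary case requires that the handle decomposition be taken relative to $\partial M \times \R^k$ (legitimate since $\nabla F$ points outwards, so $\partial M \times \R^k$ is a ``lower'' boundary through which no flow lines exit into the interior), and that the ordering of critical values be chosen so that the collar and the quadratic-at-infinity region sit at the extreme ends of the range of $F$; the combinatorial bookkeeping is then identical to Damian's. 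I would therefore structure the proof as: (1) reduce to elementary operations via Whitehead torsion; (2) realise each elementary operation geometrically by an interior modification, citing \cite{OTSMNOACM} for the local models; (3) check that the almost-quadratic-at-infinity and outward-gradient conditions survive, which is immediate as they are conditions at infinity; (4) invoke the relative handle-decomposition argument of \cite[Theorem 2.2]{OTSMNOACM} to conclude that the final function's Morse complex is the prescribed $(D_\bullet,\partial_D)$.
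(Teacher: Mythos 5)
Your proposal is correct and follows essentially the same route as the paper: stabilise $f$ by a non-degenerate quadratic form, decompose the simple homotopy equivalence into stabilisations and elementary moves realised by Morse-theoretic births/deaths and handle-slides as in Damian's closed case, and observe that all modifications are compactly supported away from $\partial M \times \R^k$ (flow lines between critical points avoid a neighbourhood of the boundary since the gradient points outwards there), so the almost-quadratic-at-infinity condition is preserved. The paper's proof is exactly this sketch, deferring the local models and realisation bookkeeping to \cite[Lemma 2.3, Theorem 2.2]{OTSMNOACM}, just as you do.
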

\begin{proof}
The case when $M$ has boundary follows by the same proof as the case when $M$ is closed. Roughly speaking, the proof consists of the following steps. First, we stabilise the function $f$ by a non-degenerate quadratic function $Q \colon \R^k \to \R$, thus obtaining a Morse function $f + Q \colon M \times \R^k \to \R$ for some sufficiently large $k > 0$. This function is almost quadratic at infinity by construction. We may then realise the simple homotopy equivalence by a sequence of Morse theoretic handle-slide moves together with birth/death moves applied to this stabilised Morse function. Since these moves all may be realised by compactly supported modifications (a gradient flow line connecting two critical points is disjoint from a fixed neighbourhood of $\partial M \times \R^k$ by assumption), we may assume that the Morse function is kept fixed in a neighbourhood of $\partial M \times \R^k$ throughout the modification. See e.g.~\cite[Lemma~2.3]{OTSMNOACM}. The resulting function produced will hence also be almost quadratic at infinity in the sense of Definition \ref{defn:almostquadratic}.
\end{proof}

\subsection{Group theoretic background}
\label{sec:algebra}
Here we remind the reader of some definitions and facts from group theory that will become useful later.

A group $G$ is called an \emph{extension of a group $Q$ by a group $N$}, if $N$ is a normal subgroup of $G$ and the quotient group $G/N$ is isomorphic to the group $Q$.

A group $G$ is called \emph{solvable} if it admits a subnormal series whose factor groups are all abelian, that is, if there are subgroups
$$\{1\}=G_{0}<G_{1}<\dots<G_{k-1}<G_{k}=G$$ such that $G_{i-1}$ is a normal subgroup of $G_i$ and $G_{i}/G_{i-1}$ is abelian for $i=1,\dots,k$.

Let $\pi$ denote a set of primes, then a {\em Hall $\pi$-subgroup} is a subgroup whose order is a product of primes in $\pi$, and whose index is not divisible by any primes in $\pi$.
In \cite{ANOSG}, Hall proved the following theorem.
\begin{thm}[Hall]\label{conjpihallsubgr}
Given a finite solvable group $G$ and a set of primes $\pi$, then any two Hall $\pi$-subgroups of $G$ are conjugate.
\end{thm}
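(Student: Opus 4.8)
This is Hall's classical theorem, and the plan is simply to run the standard proof by induction on $|G|$. Write $|G| = mn$ where $m$ is the $\pi$-part and $n$ the $\pi'$-part of $|G|$; since a Hall $\pi$-subgroup has order a $\pi$-number and index a $\pi'$-number, it is precisely a subgroup of order $m$. Let $H_1, H_2 \le G$ be two Hall $\pi$-subgroups. The case $G = \{1\}$ is trivial, so I would assume $G \neq \{1\}$ and choose, using solvability, a minimal normal subgroup $N \trianglelefteq G$; since $G$ is solvable, $N$ is an elementary abelian $p$-group for some prime $p$. The argument then splits into the cases $p \in \pi$ and $p \notin \pi$.

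Suppose first $p \in \pi$. Then $N$ is a normal $\pi$-subgroup, so each $H_i N$ is again a $\pi$-subgroup; since $|H_i| = m$ is the largest possible order of a $\pi$-subgroup of $G$, this forces $H_i N = H_i$, i.e.~$N \le H_i$ for $i = 1,2$. Passing to the quotient, $H_1/N$ and $H_2/N$ are Hall $\pi$-subgroups of the strictly smaller solvable group $G/N$, hence conjugate by the inductive hypothesis. Lifting a conjugating coset to an element $g \in G$ and using that $N$ is normal and contained in both $H_1$ and $H_2$, one concludes $gH_1g^{-1} = H_2$.

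Suppose now $p \notin \pi$. Then $H_i \cap N = \{1\}$ for order reasons, so $M_i := H_i N$ has order $m\,|N|$, whose $\pi$-part is exactly $m$; equivalently, $M_i/N$ is a Hall $\pi$-subgroup of $G/N$. By induction applied to $G/N$, the subgroups $M_1/N$ and $M_2/N$ are conjugate, and lifting as before we may, after replacing $H_1$ by a conjugate, assume $M_1 = M_2 =: M$. If $M \subsetneq G$, then $H_1$ and $H_2$ are two Hall $\pi$-subgroups of the proper solvable subgroup $M$, hence conjugate in $M$ by induction, and we are done. The only remaining possibility is $M = G$; then $|N| = n$, so $N$ is a normal Hall $\pi'$-subgroup of $G$ and $H_1, H_2$ are both complements to it.

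The heart of the matter — and the point where solvability is genuinely used, the statement being false for arbitrary finite groups — is this last case. Here the conjugacy of the two complements $H_1, H_2$ of the normal Hall $\pi'$-subgroup $N$ is exactly the conjugacy part of the Schur--Zassenhaus theorem, which applies because $N$, as a subgroup of the solvable group $G$, is itself solvable. I would cite Schur--Zassenhaus as a known result; should a self-contained treatment be desired, it too is proved by an entirely parallel induction, reducing along a minimal normal subgroup of $N$. Apart from this case, the proof is routine order-counting together with the single observation that the quotient map $G \to G/N$ carries Hall $\pi$-subgroups to Hall $\pi$-subgroups and lifts them back, so I expect no obstacle beyond bookkeeping the two quotient reductions above.
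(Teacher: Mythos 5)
Your argument is correct: the two-case induction along a minimal normal subgroup $N$ (elementary abelian since $G$ is solvable), with the quotient reductions and the final appeal to the conjugacy part of Schur--Zassenhaus, is the standard textbook proof of Hall's theorem, and all the order-counting steps check out. The paper itself offers no proof of this statement -- it is quoted as a classical result with a citation to Hall's original 1928 article -- so there is nothing to compare against beyond noting that your route is the expected one. One small simplification you could record: in your terminal case $M=G$ the normal Hall $\pi'$-subgroup is the minimal normal subgroup $N$ itself, hence elementary abelian, so you only need the abelian case of Schur--Zassenhaus conjugacy (complements differ by a $1$-cocycle, and $H^1$ is killed by the coprime orders $|N|$ and $|G/N|$), rather than the full solvable version.
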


A group $G$ is said to be \emph{perfect} if it equals its own commutator subgroup $[G,G]$.

A group is said to be \emph{superperfect} when its first two homology groups are trivial, i.e. $$H_1(G, \Z) = H_2(G, \Z) = 0.$$ The property of being superperfect is stronger than a property of being perfect, since perfect can be translated into $H_1(G, \Z) = 0$.

The following fact follows from the classification of finite simple groups.
\begin{fact}
If $G$ is a finite simple group, then
\begin{itemize}
\item[(i)] if $G$ is abelian, then the minimal number of generators of $G$ equals $1$,
\item[(ii)] if $G$ is non-abelian, then the minimal number of generators of $G$\linebreak equals~$2$.
\end{itemize}
\end{fact}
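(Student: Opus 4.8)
The plan is to handle the two cases separately; case (i) is elementary, while case (ii) rests on the classification of finite simple groups.

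For (i), I would first observe that a finite simple abelian group must be cyclic of prime order. Since $G$ is abelian, every subgroup is normal, so simplicity forces $G$ to have no subgroups besides $\{1\}$ and $G$ itself. Choosing any non-identity element and considering the cyclic subgroup it generates, we conclude that $G \cong \Z/p\Z$ for some prime $p$. Such a group is non-trivial and generated by a single element, so its minimal number of generators is exactly $1$.

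For (ii), the lower bound is immediate: a group generated by at most one element is cyclic, hence abelian, so a non-abelian group satisfies $d(G) \ge 2$. The real content is the upper bound $d(G) \le 2$, namely that every finite non-abelian simple group is two-generated. Here I would appeal to the classification of finite simple groups and verify the statement family by family: for the alternating groups $A_m$ with $m \ge 5$ one writes down explicit generating pairs (for instance a fixed $3$-cycle together with a suitable $m$-cycle or $(m-1)$-cycle, the choice depending on the parity of $m$); for the finite simple groups of Lie type, two-generation is a theorem of Steinberg; and for the $26$ sporadic groups it is checked case by case, as recorded for instance in the ATLAS of finite groups. Combining the two bounds gives $d(G) = 2$.

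The main obstacle is precisely the upper bound in (ii): it is not accessible by elementary arguments and genuinely depends on the classification of finite simple groups, together with the (by now standard) verification that every family occurring in the classification consists of two-generated groups. In the write-up I would therefore cite this as a known consequence of the classification rather than attempt to reprove it.
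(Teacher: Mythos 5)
Your proposal is correct and follows exactly the route the paper has in mind: the paper offers no proof beyond the remark that the fact "follows from the classification of finite simple groups," and your write-up simply makes explicit the standard argument behind that remark (the abelian case being elementary, and $2$-generation of non-abelian finite simple groups being checked family by family via the classification). Nothing is missing; citing the $2$-generation theorem as a known consequence of the classification, as you suggest, is precisely what the paper does implicitly.
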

Finally, we recall the following realisation result due to Kervaire \cite{SHSATFG}.
\begin{thm}[\cite{SHSATFG}]\label{kerealorh1h2vansupperf}
Let $G$ be a group satisfying the following conditions:
\begin{itemize}
\item[(i)] $G$ admits a finite presentation,
\item[(ii)] $H_{1}(G,\Z)=0$,
\item[(iii)] $H_{2}(G,\Z)=0$,
\end{itemize}
and let $n$ be an integer greater than $4$. Then there exists an $n$-dimensional smooth homology sphere $M$ such that $\pi_1(M)\simeq G$.
\end{thm}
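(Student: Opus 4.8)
The plan is to follow Kervaire's original surgery-theoretic construction from \cite{SHSATFG}. Using hypothesis (i), fix a finite presentation $G = \langle x_1,\dots,x_p \mid r_1,\dots,r_q\rangle$, and begin with the closed oriented stably parallelisable $n$-manifold $M_0 := \#^p(S^1\times S^{n-1})$, whose fundamental group is the free group $F_p$. Since $\dim M_0 \ge 3$, the relators $r_j\in F_p$ are representable by pairwise disjoint smoothly embedded circles $C_j\subset M_0$; each $C_j$ has trivial normal bundle by orientability, and one may choose framings so that framed surgery along $C_1\sqcup\dots\sqcup C_q$ yields a closed oriented stably parallelisable manifold $M_1$ (for instance, $M_1$ bounds the parallelisable manifold obtained by attaching $2$-handles with these framings to $\natural^p(S^1\times D^{n-1})$). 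Because $n-1\ge 3$, this surgery alters $\pi_1$ only by killing the classes $[C_j]$, so van Kampen's theorem gives $\pi_1(M_1)\cong F_p/\langle\!\langle r_1,\dots,r_q\rangle\!\rangle = G$.

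Next I would reduce the homology of $M_1$ to that of $S^n$ while retaining $\pi_1=G$ and stable parallelisability. First, $H_1(M_1;\Z)=H_1(G;\Z)=G^{\mathrm{ab}}=0$ by (ii). Hopf's exact sequence $\pi_2(M_1)\to H_2(M_1;\Z)\to H_2(G;\Z)\to 0$ together with (iii) shows that every class in $H_2(M_1;\Z)$ is spherical; as $2\cdot 2<n$, a generating set is realised by pairwise disjoint embedded $2$-spheres, whose normal bundles (stably trivial, of rank $n-2\ge 3$ over $S^2$) are trivial. Framed surgery along them preserves stable parallelisability and, since $n-2\ge 3$, the fundamental group, producing $M_2$ with $\pi_1(M_2)=G$ and $H_1(M_2;\Z)=H_2(M_2;\Z)=0$. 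Poincar\'{e} duality and the universal coefficient theorem then give $H_{n-1}(M_2;\Z)\cong H^1(M_2;\Z)=0$ and $H_{n-2}(M_2;\Z)\cong H^2(M_2;\Z)=0$; for $n=5$ this already exhibits $M_2$ as a homology sphere, and we are done.

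For $n\ge 6$ I would proceed inductively: given a manifold $M$ with $\pi_1(M)=G$, stably parallelisable, and $\widetilde H_i(M;\Z)=0$ for $i<k$ and (dually) for $i>n-k$, where $3\le k\le\lfloor(n-1)/2\rfloor$, kill $H_k(M;\Z)$ by framed surgery on embedded $k$-spheres — here $2k<n$ gives embeddedness, stable parallelisability gives trivial normal bundles, and $n-k\ge(n+1)/2\ge 3$ keeps $\pi_1$ fixed — and use Poincar\'{e} duality to dispose of $H_{n-k}$ at the same time. When $n$ is odd this terminates with a $\Z$-homology $n$-sphere $M$ satisfying $\pi_1(M)=G$; when $n=2s$ is even a single surgery in the middle dimension $k=s$ remains.

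The main obstacle lies in this last step, together with a point glossed over in the induction: for $k\ge 3$ the classes of $H_k(M;\Z)$ need not be spherical, since the universal cover is forced to carry homology in intermediate degrees in order to absorb $H_{\ge 3}(G;\Z)$, which we have \emph{not} assumed to vanish. One must therefore perform \emph{equivariant} surgery, tracking the $\Z[G]$-module structure of $H_\bullet(\widetilde M;\Z)$ in the stable range below the middle dimension, and then, for $n=2s$, confront the analogue of a surgery obstruction: one has to produce an embedded $s$-sphere with trivial normal bundle and complete the surgery, which requires controlling the equivariant self-intersection form and performing the Whitney trick over $\Z[G]$ (legitimate since $n\ge 6$). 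The construction goes through — and this is where finite presentability of $G$ together with the vanishing of $H_1(G;\Z)$ and $H_2(G;\Z)$, rather than of higher homology, genuinely enter — precisely because one only needs $M$ itself, and not its universal cover, to become $\Z$-acyclic in positive degrees. Making all of this rigorous is the technical heart of the argument and occupies the bulk of \cite{SHSATFG}.
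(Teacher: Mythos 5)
First, note that the paper does not prove this statement at all: it is quoted verbatim from Kervaire's article and used as a black box, so the only comparison available is with \cite{SHSATFG} itself. Your first two paragraphs do reproduce the opening of Kervaire's construction correctly: surgery on relator circles in $\#^p(S^1\times S^{n-1})$ to realise $\pi_1=G$ while staying stably parallelisable, then hypothesis (ii) to get $H_1=0$ and Hopf's sequence plus (iii) to see that $H_2$ of the resulting manifold is spherical, hence killable by framed surgery on embedded $2$-spheres.

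The genuine gap is in everything after that. Your inductive scheme for $n\ge 6$ (killing $H_3,\dots$ up to the middle dimension, worrying about non-spherical classes, and invoking equivariant surgery over $\Z[\pi_1]$, self-intersection forms and a Whitney trick to handle a middle-dimensional step) is both unnecessary and, as you yourself observe, not something you can carry out: non-spherical integral homology classes cannot be killed by surgery, and making the universal cover acyclic is obstructed by $H_{\ge 3}(G;\Z)$. You then defer this ``technical heart'' to \cite{SHSATFG}, so the proposal is not a proof. The point you are missing is that no higher surgeries are needed at all: the manifold $M_1$ obtained from the relator surgeries has homology concentrated in degrees $0,2,n-2,n$ (the $1$-surgeries only touch degrees $1,2,n-2,n-1$, and $H_1=H_{n-1}=0$ by (ii)), and $H_2(M_1)$ is \emph{free} abelian, being the kernel of the relator matrix $\Z^q\to\Z^p$. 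Surgering embedded framed spheres representing a basis of this free group then kills $H_2$ without creating new classes in degrees $2$, $3$, $n-3$, $n-2$ --- a direct Mayer--Vietoris (or handle chain complex) computation, using that the chosen classes form a basis; even the delicate case $n=5$, where the dual spheres are again $2$-dimensional, works for this reason. Poincar\'{e} duality and universal coefficients then show the result is already a homology sphere. So the actual argument involves no surgery obstruction and no $\Z[G]$-equivariant machinery; the step your write-up leaves unverified (that the $2$-surgeries really produce $H_2=0$ and create no intermediate homology) is precisely the computation that has to be supplied, and without it the proof is incomplete.
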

We also observe that if $G$ is a fundamental group of a smooth homology sphere $M$, then it satisfies conditions $(i)$, $(ii)$, $(iii)$.
Conditions $(i)$ and $(ii)$ are automatic and condition $(iii)$ follows from Hopf's theorem \cite{FUZBG} which says that $H_2(G;\Z)\simeq H_2(M;\Z)/\rho(\pi_2(M))$, where $\rho \colon \pi_2(M) \to H_2(M;\Z)$ is the Hurewicz homomorphism.

Given a finitely presented group $G$,
let $d(G)$ denote the minimal number of generators of $G$, and $\delta(G)$ be the minimal number of generators of the augmentation ideal $I_{G}$ of $G$ as a $\mathbb Z[G]$-module (i.e.~the two-sided ideal of $\Z[G]$ generated by elements of the form $g-e$ with $g\in G$ and $e$ being the group unit in $G$). It is not difficult to see that $d(G)-\delta(G)\geq 0$.

Observe that $d(G)=\delta(G)$ holds for a large class of groups.
\begin{thm}[\cite{IR}]
Let $G$ be a finite group which is either simple or solvable. Then $d(G)=\delta(G)$.
\end{thm}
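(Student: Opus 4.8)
The plan is to prove the two inequalities $\delta(G)\le d(G)$ and $d(G)\le\delta(G)$ separately; the first is soft and valid for an arbitrary group, while the second is proved directly when $G$ is simple and by induction when $G$ is solvable. For the soft inequality, a generating set $g_1,\dots,g_d$ of $G$ produces $\Z[G]$-module generators $g_1-e,\dots,g_d-e$ of $I_G$, using $gh-e=g(h-e)+(g-e)$ and the fact that the elements $g-e$ span $I_G$ over $\Z$; this is the inequality already noted before the statement. I would also record the complementary observation that the canonical $\Z[G]$-linear isomorphism $I_G/I_G^2\cong G^{\mathrm{ab}}$ (trivial actions on both sides) carries a module generating set of $I_G$ to an abelian generating set of $G^{\mathrm{ab}}$, so $d(G^{\mathrm{ab}})\le\delta(G)$; in particular $d(G)=\delta(G)$ already holds for every abelian group, which settles the case of an abelian simple group $G\cong\Z/p$.

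For a non-abelian simple group the remaining input is $d(G)=2$ (the Fact recalled above, a consequence of the classification of finite simple groups), so $\delta(G)\le 2$ and it suffices to exclude $\delta(G)=1$. Suppose $I_G$ were a cyclic $\Z[G]$-module; then the trivial module $\Z$ admits a free resolution beginning $\Z[G]\to\Z[G]\xrightarrow{\varepsilon}\Z\to 0$, whose second syzygy $K$ is a submodule of $\Z[G]$ of $\Z$-rank $|G|-(|G|-1)=1$. Since the perfect group $G$ has no subgroup of index $2$, $K\cong\Z$ with trivial action, so $\Omega^2\Z\cong\Z$ in the stable module category of $\Z[G]$. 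By $2$-periodicity of Tate cohomology this gives $H^2(G;\Z)\cong\hat H^0(G;\Z)\cong\Z/|G|$, whereas $H^2(G;\Z)\cong\mathrm{Hom}(G^{\mathrm{ab}},\Q/\Z)$ has order $|G^{\mathrm{ab}}|=1$; this contradiction yields $\delta(G)=2=d(G)$.

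For a solvable group I would induct on $|G|$, the trivial group being immediate. Choosing a minimal normal subgroup $N\trianglelefteq G$, solvability forces $N\cong(\Z/p)^{a}$ to be elementary abelian and irreducible as an $\F_p[G]$-module, and the inductive hypothesis applies to $\overline G:=G/N$. If $N\subseteq\Phi(G)$ then $d(G)=d(\overline G)$ since Frattini subgroups consist of non-generators, and I would match this with $\delta(G)=\delta(\overline G)$ via a Nakayama-type argument, applied $p$-locally, to the surjection $I_G\twoheadrightarrow I_{\overline G}$. If $N\not\subseteq\Phi(G)$ then $N$ has a complement, $G=N\rtimes\overline G$, and Gaschütz's theorem on the generation of finite groups expresses the jump $d(G)-d(\overline G)\in\{0,1\}$ in terms of $H^1(\overline G;N)$ and the multiplicity of $N$ among the chief factors of $G$; the work is then to verify that the analogous jump $\delta(G)-\delta(\overline G)$, governed by the structure of the pertinent relation modules, follows the same recipe, Hall's theorem on the conjugacy of Hall subgroups (Theorem~\ref{conjpihallsubgr}) being a natural tool in the accompanying arithmetic. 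Combining the cases closes the induction. I expect this last matching step to be the principal obstacle; by comparison the simple case reduces, once $d(G)=2$ is granted, to the short cohomological computation above.
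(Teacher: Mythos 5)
The paper does not prove this statement; it is quoted from Roggenkamp--Reiner \cite{IR} as background, so there is no proof of the authors' to compare against. Your attempt is therefore a reconstruction from scratch, and should be judged on its own.

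Your treatment of the soft inequality $\delta(G)\le d(G)$, the observation $I_G/I_G^2\cong G^{\mathrm{ab}}$, and the abelian case are all correct and standard. The non-abelian simple case is complete and, once one grants $d(G)=2$ from the classification (which the paper explicitly allows via the preceding Fact), the cohomological argument is sound: $\delta(G)=1$ would produce a second syzygy $K\subset\Z[G]$ of $\Z$-rank one, the perfectness of $G$ forces the $G$-action on $K$ to be trivial, $\Z$ is not projective over $\Z[G]$ (its Tate cohomology is nonzero), so $\Omega^2\Z\cong\Z$ stably, whence $H^2(G;\Z)\cong\hat H^0(G;\Z)\cong\Z/|G|$; this contradicts $H^2(G;\Z)\cong\Hom(G^{\mathrm{ab}},\Q/\Z)=0$. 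This is in effect the classical characterization ``$\delta(G)=1$ iff $G$ is cyclic'' specialized to perfect groups, and it is a legitimate proof.

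The solvable case, however, is an outline rather than a proof, and you flag the hole yourself. One part of it is actually easier than you make it: if $N\subseteq\Phi(G)$ there is no need for any Nakayama-type argument, since $d(G)=d(\overline G)=\delta(\overline G)\le\delta(G)$ already closes that case, the last inequality coming from the $\Z[G]$-module surjection $I_G\twoheadrightarrow I_{\overline G}$. The genuine difficulty is the complemented case $G=N\rtimes\overline G$, where one must show that the jump $\delta(G)-\delta(\overline G)$ obeys the same rule as the Gasch\"utz jump $d(G)-d(\overline G)$ governed by $H^1(\overline G;N)$ and chief-factor multiplicities. This is precisely the technical heart of the theorem (it is where the local description of minimal generator numbers of $\Z[G]$-lattices, \`a la Swan and Gruenberg, enters), and you have not supplied it. As it stands the solvable half of the statement is unproved; the plan is plausible, but the key comparison of the two ``jump'' formulas needs to be carried out.
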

There are also examples of finitely presented groups, where $d(G)-\delta(G)> 0$, see \cite{TPROADPOFG}.

\section{The proof of Theorem~\ref{mainthmsmplehomeqnts} and its consequences}\label{maintheoremanditsconsequences}
Using the Reeb flow of $(P \times \R,\alpha:=dz+\theta)$ we can displace every filling from itself inside the symplectisation $(\R \times P \times \R,d(e^t\alpha))$ (this symplectic manifold is subcritical). We will exploit this displacement in order to create a Hamiltonian isotopy that does the following. We take two copies of the filling suitably perturbed, so that the Floer complex becomes equal to the Morse homology complex for a Morse function on the filling. The goal is then to create a compactly supported Hamiltonian isotopy after which the intersection points are in bijective correspondence with the Reeb chords on the Legendrian end of the filling. The simple homotopy equivalence in Theorem \ref{mainthmsmplehomeqnts} can now be seen to follow from Theorem \ref{thm:invariance}, i.e.~the bifurcation analysis proof of the invariance of Floer homology as performed in \cite{KTIFFH}.

\subsection{The main geometric construction}
In the following we assume that we are given an exact $(n+1)$-dimensional Lagrangian filling $L_\Lambda \subset \R \times P \times \R$ of a closed Legendrian $n$-dimensional submanifold $\Lambda \subset P \times \R$ in the symplectisation of a contactisation. For simplicity, we moreover assume that $L_\Lambda \cap \{ t \ge -1 \} = [-1,+\infty) \times \Lambda$ is cylindrical. (This can always be achieved after a translation of the symplectisation coordinate.)

Recall that the Hamiltonian flow
\[\phi^s_{e^t} \colon (\R \times P \times \R,d(e^t\alpha)) \to (\R \times P \times \R,d(e^t\alpha)) \]
induced by the autonomous Hamiltonian $e^t$ coincides with the Reeb flow of the contact manifold, which in this case simply is a translation of the $z$-coordinate by $s$.

More generally, for any smooth function $g \colon \R \to \R$, we observe that the flow
\begin{gather*}
\Phi^s_{g \partial_z} \colon (\R \times P \times \R,d(e^t\alpha)) \to (\R \times P \times \R,d(e^t\alpha)),\\
(t,p,z) \mapsto (t,p,z+sg(t))
\end{gather*}
is a Hamiltonian flow (and $\phi^s_{e^t}=\Phi^s_{\partial_z}$).

\subsubsection{Constructing a small push-off}
First, we consider the Hamiltonian push-off
\[L_{\Lambda^\epsilon}:=\phi^\epsilon_{e^t}(L_\Lambda), \:\:\epsilon>0,\]
of $L_\Lambda$, which hence is a translation of the $z$-coordinate by $\epsilon>0$. Observe that we have
$$L_{\Lambda^\epsilon} \cap \{ t \ge -1\}=[-1,+\infty) \times \Lambda^\epsilon,$$
where $\Lambda^\epsilon=\phi^\epsilon_{e^t}(\Lambda)$ is obtained by the time-$\epsilon$ Reeb flow applied to $\Lambda$. In particular, $L_{\Lambda^\epsilon}$ is an exact Lagrangian filling of $\Lambda^\epsilon$. See Figure \ref{fig:pushoff} for the case of a one-dimensional filling of a zero-dimensional Legendrian submanifold.

\begin{figure}[htp]
\vspace{1em}
\centering
\labellist
\pinlabel $z$ at 144 89
\pinlabel $t$ at 275 12
\pinlabel $\color{blue}L_{\Lambda^\epsilon}=\phi^\epsilon_{e^t}(L_\Lambda)$ at 190 67
\pinlabel $L_\Lambda$ at 190 19
\pinlabel $-1$ at 103 0
\endlabellist
\includegraphics{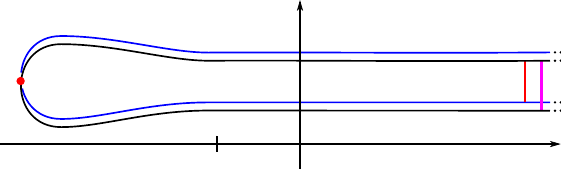}
\caption{The union of $L_\Lambda$ together with its small Hamiltonian push-off $L_{\Lambda^\epsilon}=\phi^\epsilon_{e^t}(L_\Lambda)$. Note that the Reeb chords from the positive end of $L_{\Lambda^\epsilon}$ to the positive end of $L_\Lambda$ are in natural bijective correspondence with the Reeb chords on $\Lambda$.}
\label{fig:pushoff}
\end{figure}

Take a Weinstein neighbourhood of $L_\Lambda$, i.e.~an extension of the Lagrangian embedding $L_\Lambda \hookrightarrow (\R \times P \times \R,d(e^t \alpha))$ to a symplectic embedding of a neighbourhood of the zero-section $L_\Lambda \subset (T^*L_\Lambda,-d(pdq))$. Using this identification, and assuming that we are given an $\epsilon>0$ that is chosen sufficiently small, we may identify $L_{\Lambda^\epsilon} \subset T^*L_\Lambda$ with a section $df$ for a function $f\colon L_\Lambda \to \R$ which satisfies $df(\partial_t)>0$ outside of a compact subset. After a compactly supported Hamiltonian perturbation $L_{\Lambda^\epsilon}'$ of $L_{\Lambda^\epsilon}$ we may assume that the latter function is Morse. The following computation is standard.
\begin{lem}
\label{lem:morsegrading}
Using the grading convention in Section \ref{sec:grading}, it follows that the intersection point $p_c \in L_\Lambda \cap L_{\Lambda^\epsilon}' \subset CF(L_\Lambda,L_{\Lambda^\epsilon}')$ corresponding to $c \in \OP{Crit}(f)$ has grading given by its Morse index, i.e.~$|p_c|=\OP{index}_{f}(c)$.
\end{lem}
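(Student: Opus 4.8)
We only sketch the argument, since the statement is local around the intersection point $p_c$ and reduces to a linear-algebra computation. The plan is the following. First I would take $\epsilon>0$ small enough that $L_{\Lambda^\epsilon}'$ is $C^1$-close to $L_\Lambda$ (this affects neither the set $\OP{Crit}(f)$ nor the Morse indices, while it makes all the Reeb chords $c_{p_c}$ on the Legendrian lift short). Under the Weinstein neighbourhood identification $L_{\Lambda^\epsilon}'\subset T^*L_\Lambda$ of the construction, $L_\Lambda$ is the zero section and $L_{\Lambda^\epsilon}'$ is the graph of $df$. Choosing Morse coordinates $(x_1,\dots,x_{n+1})$ near $c$ in which $f=f(c)-\tfrac12\sum_{i\le k}x_i^2+\tfrac12\sum_{i>k}x_i^2$ with $k=\OP{index}_f(c)$, the pair $(L_\Lambda,L_{\Lambda^\epsilon}')$ becomes, in standard symplectic coordinates on $\C^{n+1}$ near $p_c$, the split pair $\prod_{i=1}^{n+1}\bigl(\R,(1+i\varepsilon_i)\R\bigr)\subset\prod_{i=1}^{n+1}\C$ with $\varepsilon_i=-1$ for $i\le k$ and $\varepsilon_i=+1$ for $i>k$.

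Next I would recall from Section~\ref{sec:grading} that $|p_c|=n+1-\OP{CZ}(c_{p_c})$, where the Conley-Zehnder index is computed using the capping path $\gamma_0^{c_{p_c}}\ast(-\gamma_1^{c_{p_c}})$ running inside $L_\Lambda$ and $L_{\Lambda^\epsilon}'$ to the fixed reference point. I would take $\gamma_1^{c_{p_c}}$ to be the graph, under the above identification, of the same base path in $L_\Lambda$ that $\gamma_0^{c_{p_c}}$ traces. Since $L_{\Lambda^\epsilon}'$ is $C^1$-close to $L_\Lambda$, the Lagrangian tangent planes along $\gamma_1^{c_{p_c}}$ are $C^0$-close to those along $\gamma_0^{c_{p_c}}$, so this long capping path contributes nothing to the Maslov-type index beyond the short local contribution at the transverse double point $p_c$; thus $\OP{CZ}(c_{p_c})$ equals the index of the short path in $\C^{n+1}$ rotating $T_{p_c}L_{\Lambda^\epsilon}'=\prod_i(1+i\varepsilon_i)\R$ to $T_{p_c}L_\Lambda=\R^{n+1}$ the ``short way''. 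By the product structure this is a sum of $n+1$ one-dimensional contributions, one per line $(1+i\varepsilon_i)\R\subset\C$, and in the convention of \cite{NILSIRTNPO} such a line contributes $1$ when $\varepsilon_i>0$ and $0$ when $\varepsilon_i<0$. Hence $\OP{CZ}(c_{p_c})=(n+1)-k$, so that $|p_c|=n+1-\bigl((n+1)-k\bigr)=k=\OP{index}_f(c)$.

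The two points needing care are purely bookkeeping. The first is matching our orientation and grading conventions with those of \cite{NILSIRTNPO} for the one-dimensional model contribution; this normalisation can be pinned down by testing the formula on the one-dimensional exact Lagrangian filling of the two-point Legendrian submanifold depicted in Figure~\ref{fig:pushoff}, where a Morse function with a single critical point has that point of index $0$ while the unique Reeb chord has Conley-Zehnder index $1$. The second is verifying that the long capping path genuinely does not contribute, for which one uses the connectedness of $L_\Lambda$ together with the $C^1$-smallness of the push-off, so that $\gamma_0^{c_{p_c}}$ and $\gamma_1^{c_{p_c}}$ may indeed be chosen to be graphs of one another. I expect the convention matching to be the only real, if routine, obstacle.
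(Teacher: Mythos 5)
Your sketch is correct in substance, and it supplies essentially the argument that the paper itself omits: the paper disposes of this lemma with ``the following computation is standard'', and the only explicit index computation of this type it carries out is in the proof of Proposition \ref{prop:disp}, where the count is done in a Legendrian (jet-space) local model by adding one-jets of functions to the sheets and using the stabilisation relation $\OP{CZ}(c')=\OP{CZ}(c)+\OP{index}$. You instead linearise in the Lagrangian projection, splitting the tangent space at $p_c$ into lines $\bigl(\R,(1+i\varepsilon_i)\R\bigr)$ and using additivity of the Maslov-type index; the two routes are equivalent, and yours has the merit of making the locality explicit. In particular, your treatment of the capping paths is the right one: taking $\gamma_1^{c_p}$ to be the graph image of $\gamma_0^{c_p}$, so that the two paths of tangent planes stay uniformly close and the long part of the loop contributes nothing beyond the local crossing, is exactly where the $C^1$-smallness of the push-off (and the fact that the Reeb translation preserves tangent planes at the reference points of Section \ref{sec:grading}) enters.

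The one caveat is that all of the content of the lemma sits in the one-dimensional normalisation you defer: which of the lines $(1\pm i)\R$ contributes $1$ and which contributes $0$ depends on the conventions of \cite{NILSIRTNPO} \emph{together with} the convention of Section \ref{sec:grading} that the chord starts on the push-off (this fixes the closing-up rotation at the double point), and also on the sign convention of the Weinstein identification (the paper uses $(T^*L_\Lambda,-d\lambda_{L_\Lambda})$, so whether the push-off is the graph of $+df$ or $-df$ is itself part of the bookkeeping). Your plan to pin this down on the one-dimensional filling of Figure \ref{fig:pushoff} is fine, but only if the Conley--Zehnder index of that single chord is computed directly from the definition in \cite{NILSIRTNPO} (a short explicit computation with two transverse curves in $\C$); asserting that it equals $1$ because the desired formula $|p_c|=\OP{index}_f(c)$ demands it would be circular. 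With that model computation actually carried out, your argument is complete and consistent with the paper's conventions (and with the degree $-1$ differential noted at the end of Section \ref{sec:grading}).
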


\subsubsection{Wrapping}
 We now choose $g$ to be of the form $g(t)=-1$ for $t \le -1$, $g'(t) > 0$ for $t \in (-1,0)$, while $g(t)=0$ for $t \ge 0$. Given that we take $0<\epsilon<\min_{c \in \mathcal{Q}(\Lambda)}\ell(c)$ sufficiently small and $S \gg 0$ sufficiently large, it follows that
\[\Phi^S_{g\partial_z}(L_\Lambda) \cap L_{\Lambda^\epsilon} \subset \{ -1 <t <0 \}.\]
Since $g(t) \le 0$ and $g'(t) > 0$ holds in the subset $\{ -1 <t <0 \}$, the flow $\Phi^s_{g\partial_z}(t,p,z)$ there has the effect of ``wrapping'' the Lagrangian $L_\Lambda$ in the negative $z$-direction. In the same subset $L_\Lambda$ and $L_{\Lambda^\epsilon}$ are cylindrical over $\Lambda$ and $\Lambda^\epsilon$, respectively, and every intersection point thus corresponds to a Reeb chord (i.e.~an integral curve of $\partial_z$) starting on $\Lambda^{\epsilon}$ and ending on $\Lambda$. Note that the latter Reeb chords are in a natural bijective correspondence with the Reeb chords on $\Lambda$. For $S \gg 0$ sufficiently large, we hence get an induced bijection
\begin{gather*}
\mathcal{Q}(\Lambda) \to \Phi^S_{g\partial_z}(L_\Lambda) \cap L_{\Lambda^\epsilon}',\\
c \mapsto p_c
\end{gather*}
between the Reeb chords on $\Lambda$ and the intersection points produced by the wrapping. Figure~\ref{fig:wrap} illustrates this in the case of a one-dimensional filling of a zero-dimensional Legendrian submanifold.

\begin{prop}
\label{prop:disp}
Given $0\!<\!\epsilon\!<\!\min_{c \in \mathcal{Q}(\Lambda)}\ell(c)$ and $S \!\gg\! 0$ sufficiently large, the intersection points $ \Phi^S_{g\partial_z}(L_\Lambda) \cap L_{\Lambda^\epsilon}'$ are in bijective correspondence with $\mathcal{Q}(\Lambda)$, and are all transverse if and only if $\Lambda$ is chord-generic. Moreover, the grading of $p_c \in \Phi^S_{g\partial_z}(L_\Lambda) \cap L_{\Lambda^\epsilon}' \subset CF(\Phi^S_{g\partial_z}(L_\Lambda),L_{\Lambda^\epsilon}')$ is given by $|p_c|=n-|c|$ modulo the Maslov number of $L_\Lambda$, where $n:=\dim \Lambda$.
\end{prop}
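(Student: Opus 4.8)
The plan is to work directly with the configuration $\Phi^S_{g\partial_z}(L_\Lambda)\cup L_{\Lambda^\epsilon}'$, exploiting that for $0<\epsilon<\min_{c\in\mathcal{Q}(\Lambda)}\ell(c)$ and $S\gg 0$ all of the relevant intersections are confined to the region $\{-1<t<0\}$, where both Lagrangians are cylindrical over $P\times\R$.

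\emph{Containment and bijection.} First I would note that on the cylindrical end one has $f\approx\epsilon e^t$ to first order in $\epsilon$, so for $\epsilon$ small $f$, and hence its Morse perturbation, has no critical points in $\{t\ge -1\}$; thus $L_{\Lambda^\epsilon}'$ may be taken equal to $[-1,+\infty)\times\Lambda^\epsilon$ on $\{t\ge -1\}$ and to be compact, with $z$-coordinate in a fixed bounded interval, on $\{t\le -1\}$. On $\{t\ge 0\}$ the wrapping Hamiltonian vanishes, so $\Phi^S_{g\partial_z}(L_\Lambda)$ equals $[0,+\infty)\times\Lambda$ there, while $\epsilon<\min_c\ell(c)$ forces $\Lambda\cap\Lambda^\epsilon=\emptyset$; on $\{t\le -1\}$ one has $g\equiv -1$, so $\Phi^S_{g\partial_z}$ translates the compact piece $L_\Lambda\cap\{t\le -1\}$ downward in $z$ by $S$, which for $S\gg 0$ clears the bounded $z$-range of $L_{\Lambda^\epsilon}'$. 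Hence all intersection points lie in $\{-1<t<0\}$, where $\Phi^S_{g\partial_z}(L_\Lambda)=\{(t,p,z+Sg(t)):(p,z)\in\Lambda\}$ and $L_{\Lambda^\epsilon}'=(-1,0)\times\Lambda^\epsilon$. The intersection in the slice $\{t=t_0\}$ is then in bijection with the Reeb chords of $\Lambda$ of length exactly $\epsilon-Sg(t_0)>0$; as $t_0$ ranges over $(-1,0)$ this length sweeps out $(\epsilon,\epsilon+S)$, and since $g|_{(-1,0)}$ is a strictly increasing bijection onto $(-1,0)$ and every Reeb chord has length in $(\epsilon,\max_c\ell(c)]$, for $S$ large each chord $c$ occurs at exactly one slice, the one with $g(t_0)=(\epsilon-\ell(c))/S$. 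This produces the asserted bijection $c\mapsto p_c$.

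\emph{Transversality.} Write $p_c=(t_0,\bar p,z_\ast)$, let $q^\pm\in\Lambda$ be the endpoints of $c$, and let $\ell^\pm\subset T_{\bar p}P$ be the images of $T_{q^\pm}\Lambda$ under the canonical projection $\Pi_P\colon P\times\R\to P$. Using that $\Lambda$ is Legendrian one computes $T_{p_c}\Phi^S_{g\partial_z}(L_\Lambda)=\R\langle\partial_t+Sg'(t_0)\partial_z\rangle\oplus\{(0,v,-\theta(v)):v\in\ell^+\}$ and $T_{p_c}L_{\Lambda^\epsilon}'=\R\langle\partial_t\rangle\oplus\{(0,v,-\theta(v)):v\in\ell^-\}$. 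Since $g'(t_0)>0$, the vectors $\partial_t+Sg'(t_0)\partial_z$ and $\partial_t$ already span $\R\langle\partial_t,\partial_z\rangle$, so these tangent spaces span $T_{p_c}(\R\times P\times\R)$ if and only if $\ell^++\ell^-=T_{\bar p}P$, i.e.\ if and only if the double point of $\Pi_P(\Lambda)$ corresponding to $c$ is transverse. Thus all $p_c$ are transverse exactly when $\Lambda$ is chord-generic.

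\emph{Grading.} This is the step I expect to be most delicate, as it amounts to matching Conley--Zehnder conventions. The idea is that in $\{-1<t<0\}$ the symplectic form $d(e^t\alpha)=e^t(dt\wedge dz+dt\wedge\theta+d\theta)$ and the pair of tangent planes above split at $p_c$: the symplectic plane $\R\langle\partial_t,\partial_z\rangle$, on which the form is $e^{t_0}dt\wedge dz$, carries the transverse pair of lines $(\R\langle\partial_t+Sg'(t_0)\partial_z\rangle,\R\langle\partial_t\rangle)$, while its symplectic complement $\{(0,v,-\theta(v)):v\in T_{\bar p}P\}$ is symplectomorphic to $(T_{\bar p}P,e^{t_0}d\theta)$ via $(0,v,-\theta(v))\mapsto v$ and carries the pair $(\ell^+,\ell^-)$, which is precisely the pair of sheets of $\Pi_P(\Lambda)$ at the double point of $c$. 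Conley--Zehnder indices being additive under such splittings, and the Maslov potential on $L_\Lambda$ restricting on the cylindrical end to the chosen one on $\Lambda$ (with $\Phi^S_{g\partial_z}$, a Hamiltonian isotopy, leaving gradings unchanged), the $P$-factor contributes exactly $\OP{CZ}(c)$, while a direct one-dimensional computation -- capping the two lines off along the cylindrical ends of $\Phi^S_{g\partial_z}(L_\Lambda)$ and $L_{\Lambda^\epsilon}'$, where in the $(t,z)$-plane they are horizontal -- shows the $\R^2$-factor contributes $0$. Hence $\OP{CZ}(c_{p_c})=\OP{CZ}(c)$, and the convention of Section~\ref{sec:grading} gives $|p_c|=(n+1)-\OP{CZ}(c_{p_c})=(n+1)-\OP{CZ}(c)=n-|c|$ modulo the Maslov number of $L_\Lambda$. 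The analogous but genuinely different computation for the un-wrapped configuration is Lemma~\ref{lem:morsegrading}, which provides a useful consistency check.
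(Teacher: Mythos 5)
Your argument is correct and follows the same overall strategy as the paper: confine all intersections to the cylindrical region $\{-1<t<0\}$, identify them with Reeb chords by the length count, verify transversality by the tangent-space computation, and reduce the grading to the Conley--Zehnder index of $c$ plus a one-dimensional contribution that vanishes. Your treatment of the bijection and of transversality is in fact more explicit than the paper's, which settles these points in the discussion preceding the proposition. The one place where your route genuinely differs is the grading. The paper does not argue via additivity of $\OP{CZ}$ under a pointwise symplectic splitting; instead it builds a local model in the one-jet space $J^1\R\times P$ near the chord, in which the wrapped Lagrangian corresponds to $\R\times\Lambda_0$ deformed by the one-jet of $q^2$ and the push-off to $\R\times\Lambda_1$ deformed by the one-jet of $-Z+\epsilon q$, and then invokes the standard relation $\OP{CZ}(c')=\OP{CZ}(c)+\OP{index}_f(\epsilon/2)$ for the difference function $f(q)=q^2+Z-\epsilon q$, whose unique critical point is a minimum, hence of index $0$. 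This is exactly the content of your claim that the $(t,z)$-plane contributes zero, but your sketch compresses two points that the local model handles automatically: first, additivity of the Conley--Zehnder index requires the splitting of the Lagrangian tangent planes along the entire capping paths, not only at $p_c$ (in the region $-1<t<0$ the line $\R\langle\partial_t+Sg'(t)\partial_z\rangle$ rotates, and one must check this rotation is accounted for consistently); second, the vanishing of the one-dimensional contribution is not formal -- it uses $g'(t_0)>0$ together with the roles of the two Lagrangians in $CF(\Phi^S_{g\partial_z}(L_\Lambda),L_{\Lambda^\epsilon}')$, and wrapping in the opposite direction would shift the grading by one (this is the Morse index $0$ versus $1$ of the difference function in the paper's model). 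With these two points spelled out, or by passing to the paper's jet-space local model, your proof is complete; Lemma~\ref{lem:morsegrading} indeed serves as the consistency check you mention.
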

\begin{proof}
 The computation of the Conley-Zehnder indices will be performed in a local model of the Legendrian lift $\widetilde{L}_0 \cup \widetilde{L}_1$ of $\Phi^S_{g\partial_z}(L_\Lambda) \cup L_\epsilon$ describing a neighbourhood of the Reeb chord that corresponds to the intersection point $p_c\in \Phi^S_{g\partial_z}(L_\Lambda) \cap L_{\Lambda^\epsilon}$. More precisely, we assume that $\widetilde{L}_0$ and $\widetilde{L}_1$ corresponds to $\Phi^S_{g\partial_z}(L_\Lambda)$ and $L_{\Lambda^\epsilon}$, respectively, while the Legendrian lift is chosen so that the Reeb chord starts at $\widetilde{L}_1$. Recall the definition $|p_c|=n+1-\OP{CZ}(p_c)$ of the grading in Section \ref{sec:grading}, where the latter denotes the Conley-Zehnder index of the Reeb chord for precisely this choice of Legendrian lift.

Let $\Lambda_0,\Lambda_1 \subset (P \times \R,\alpha)$ be small open subsets of the sheets of $\Lambda$ containing the end point and starting point of the Reeb chord $c$, respectively. The sought lifts can now be constructed in the following manner. First, we take the product of $\Lambda_0 \cup \Lambda_1 \subset (P \times \R,dz+\theta)$ with $\R \subset (T^*\R,-pdq)$, and in this way obtain the Legendrian submanifold $\R \times (\Lambda_0 \cup \Lambda_1) \subset (T^*\R \times P \times \R=J^1\R \times P,dz-pdq+\theta)$ of one dimension higher. Second, we deform the component $\R \times \Lambda_1$ by the addition of the one-jet of the function $-Z+\epsilon q$ on $\R$ for $Z \gg 0$ (the number $\epsilon>0$ here corresponds to the translation taking $\Lambda$ to $\Lambda^\epsilon$, while $-Z  \ll 0$ corresponds to a translation of the Legendrian lift of $L_{\Lambda^\epsilon}$ by the negative Reeb flow), giving rise to the Legendrian submanifold $\widetilde{L}_1$. Third, we deform the component $\R \times \Lambda_0$ by the addition of the one-jet of the Morse function $q^2$ on $\R$ (this corresponds to the wrapping of $L_\Lambda$), giving rise to the Legendrian submanifold $\widetilde{L}_0$. The local model produced has a unique Reeb chord $c'$ from $\widetilde{L}_1$ to $\widetilde{L}_0$ contained above $q=\epsilon/2$, the latter being the unique and non-degenerate critical point of the function $f(q)=q^2-(-Z+\epsilon q)$.

The sought identity
\[ |p_c|=n+1-\OP{CZ}(p_c)=n+1-\OP{CZ}(c')=n-|c|\]
now follows from the basic relation
\[\OP{CZ}(c')=\OP{CZ}(c)+\OP{index}_f(\epsilon/2)=\OP{CZ}(c)\]
concerning the Conley-Zehnder index, where $\OP{index}_f(\epsilon/2)=0$ denotes the Morse index. To see this, note that the canonical projection $T^*\R \times P \to P$ maps the Lagrangian projection of $\widetilde{L}_i$ to the Lagrangian projection of $\Lambda_i$ for $i=0,1$. (Above we have used the grading convention in Section \ref{sec:grading} concerning the choices of capping paths.)

We also refer to Figure \ref{fig:wrap} for a schematic picture of the intersection point together with the corresponding Reeb chords.
\end{proof}

\begin{figure}[htp]
\vspace{1em}
\centering
\labellist
\pinlabel $z$ at 144 143
\pinlabel $t$ at 275 65
\pinlabel $\color{blue}L_{\Lambda^\epsilon}=\phi^\epsilon_{e^t}(L_\Lambda)$ at 190 120
\pinlabel $\Phi^S_{g\partial_z}(L_\Lambda)$ at 100 15
\pinlabel $-S$ at 160 4
\pinlabel $-1$ at 103 75
\pinlabel $\color{magenta}c$ at 265 94
\pinlabel $\color{red}\widetilde{c}$ at 245 96
\pinlabel $\color{red}p_c$ at 117 92
\endlabellist
\includegraphics{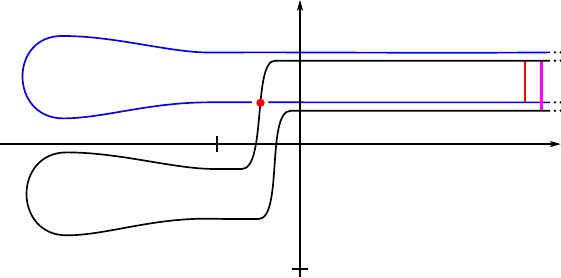}
\caption{After wrapping $L_\Lambda$ by applying the Hamiltonian flow $\Phi^s_{g\partial_z}$, the intersection points $ \Phi^S_{g\partial_z}(L_\Lambda) \cap L_{\Lambda^\epsilon}$ are in natural one-to-one correspondence with the Reeb chords on $\Lambda$. The double point $p_c$ corresponds to the Reeb chord $c$ on $\Lambda$ which, in turn, corresponds to the Reeb chord $\widetilde{c}$ from $\Lambda^\epsilon$ to $\Lambda$.}
\label{fig:wrap}
\end{figure}

\subsection{Identifying the Floer homology and the Morse homology (the proof of Theorem \ref{mainthmsmplehomeqnts})}

In this subsection, we recall the definition of the Morse homology complex $(CM_\bullet(L_\Lambda,f;R[\pi_1(L_\Lambda)]),\partial_f)$ for a Morse function $f \colon L_\Lambda \to \R$ with coefficients twisted by the fundamental group. The underlying graded module is
\[ CM_\bullet(L_\Lambda,f;R[\pi_1(L_\Lambda)]):=R[\pi_1(L_\Lambda)]\langle \operatorname{Crit}(f)\rangle\]
with grading given by the Morse index. The differential $\partial_f$ counts the number of \emph{negative} rigid gradient flow lines of $f$ defined for a Morse-Smale pair consisting of $f$ together with a Riemannian metric on $L_\Lambda$, while taking the homotopy class of the flow line into account (similarly as to the Floer homology with twisted coefficients as described above).
\begin{prop}
\label{prop:morsecomplex}
Given that $\epsilon>0$ is sufficiently small, after a generic and arbitrarily small compactly supported perturbation $L'_{\Lambda^\epsilon}$ of $L_{\Lambda^\epsilon}$, there is an equality
\[ (CF_\bullet(L_\Lambda,L'_{\Lambda^\epsilon};R[\pi_1(L_\Lambda)]),\partial)=(CM_\bullet(L_\Lambda,f;R[\pi_1(L_\Lambda)]),\partial_f)
\]
of complexes with grading modulo the Maslov number of $L_\Lambda$, given that we use the grading convention specified in Section \ref{sec:grading}. Here $f \colon L_\Lambda \to \R$ is a generic Morse function satisfying the properties that:
\begin{itemize}
\item All critical points are contained in the subset $\{ t < 0 \}$; and
\item Its differential satisfies $df(\partial_t)>0$ for $\{ t \ge 0 \}$.
\end{itemize}
\end{prop}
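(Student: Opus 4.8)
\textit{Proof plan.} The plan is to deduce the statement from Floer's classical identification of the self-Floer complex of a Lagrangian with a $C^1$-small Hamiltonian push-off with the Morse complex of the generating function, adapted to the present non-compact symplectisation setting and to coefficients twisted by $\pi_1(L_\Lambda)$.

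First I would fix the generators. As recalled just before Lemma~\ref{lem:morsegrading}, for $\epsilon>0$ small a Weinstein neighbourhood of $L_\Lambda$ identifies $L_{\Lambda^\epsilon}$ with the graph of $df$ for a function $f$ with $df(\partial_t)>0$ outside a compact set (near the cylindrical end $f$ agrees with $\epsilon e^t$ up to lower order). After an arbitrarily small compactly supported perturbation one may take $f$ to be Morse, to have all of $\OP{Crit}(f)$ in $\{t<0\}$, and to satisfy $df(\partial_t)>0$ on $\{t\ge 0\}$; this is possible since the locus where $df(\partial_t)\le 0$ is compact and may be pushed into $\{t<0\}$ by a compactly supported modification. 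The corresponding perturbation $L'_{\Lambda^\epsilon}$ of $L_{\Lambda^\epsilon}$ then meets $L_\Lambda$ exactly in the zeros of $df$, giving a canonical bijection $\OP{Crit}(f)\leftrightarrow L_\Lambda\cap L'_{\Lambda^\epsilon}$, and Lemma~\ref{lem:morsegrading} identifies the Floer grading of $p_c$ with $\OP{index}_f(c)$ modulo the Maslov number of $L_\Lambda$. Thus the underlying graded $R[\pi_1(L_\Lambda)]$-modules of the two complexes agree on the nose.

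The main step is to match the differentials, and this is where the work lies. For $\epsilon$ sufficiently small and a generic choice of compatible almost complex structure I would invoke the standard comparison (see Floer~\cite{MorseTheoryLagr}) between rigid holomorphic strips with boundary on $L_\Lambda\cup L'_{\Lambda^\epsilon}$ asymptotic to a pair of intersection points and rigid negative gradient flow lines of $f$ for a suitable metric. The one new point is non-compactness: I would use Lemma~4.1 of~\cite{OHRAFOELC} together with the monotonicity property to confine all such Floer strips to a fixed compact subset of $\R\times P\times\R$, and observe that negative gradient trajectories between critical points of $f$ likewise cannot escape to $t=+\infty$, since $df(\partial_t)>0$ there and $L_\Lambda$ is cylindrical with no critical points for $t$ large. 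The degeneration argument as $\epsilon\to 0$ then takes place entirely inside a fixed compact region and reduces verbatim to Floer's original computation, while SFT/Gromov compactness guarantees that no further strips appear. I expect this confinement step to be the main obstacle — the analysis itself is classical once the a priori compactness is in hand.

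Finally I would check that this identification of moduli spaces respects the extra bookkeeping in the twisted differential. The element of $\pi_1(L_\Lambda)$ attached to a strip $u$ is the class of the loop obtained from the boundary arc $u(\cdot,0)\subset L_\Lambda$ and the fixed capping paths; under the Weinstein retraction $T^*L_\Lambda\to L_\Lambda$ this arc converges as $\epsilon\to 0$ to the corresponding negative gradient trajectory of $f$, so the recorded group element coincides with the one used in the Morse complex. For the signs, when $R\ne\Z_2$ I would fix a spin structure on $L_\Lambda$ (hence on $L_\Lambda\cup L'_{\Lambda^\epsilon}$) and use that the coherent orientations of the Floer moduli spaces, constructed as in~\cite{OILCHAELI}, restrict under the above correspondence to the usual Morse-theoretic signs, again following Floer's comparison. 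Combining the three steps yields the asserted equality of complexes, with grading modulo the Maslov number of $L_\Lambda$ and with the grading convention of Section~\ref{sec:grading}.
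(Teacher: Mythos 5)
Your proposal is correct and follows essentially the same route as the paper: identify the generators and gradings via Lemma~\ref{lem:morsegrading}, then reduce the computation of the differential to Floer's classical comparison with the Morse complex of the generating function, handling the twisted coefficients (as in Damian's work cited by the paper) and the signs via a choice of spin structure and coherent orientations. The a priori confinement of Floer strips to a compact region, which you treat as the main new point, is exactly what the paper's general setup (SFT compactness, monotonicity, and Lemma~4.1 of~\cite{OHRAFOELC}) is quoted for, so your elaboration fills in rather than deviates from the paper's argument.
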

\begin{proof}
As described in Lemma \ref{lem:morsegrading}, there is a natural identification of the bases of the underlying graded vector spaces.

The calculation of the differential is standard. It was first performed by Floer in \cite{MorseTheoryLagr}, whose computation shows that the Floer homology of a $C^1$-small Hamiltonian push-off of an exact Lagrangian submanifold is equal to a Morse complex. The case with twisted coefficients as considered here was carried out in \cite[Section 2.3]{FHOTUC} by Damian. Also, see the identification in \cite[Proposition 2.4]{ELCAPI}. Recall that, when using the ring $R=\Z$, extra care must be taken when choosing the spin structure in order to obtain the correct signs.
\end{proof}

Using Proposition \ref{prop:morsecomplex} together with Theorem \ref{thm:invariance}, Theorem \ref{mainthmsmplehomeqnts} is now a direct consequence. Together with Proposition \ref{prop:morse} we then conclude Corollary \ref{maininequalitystableMorsenumberofafilling}.

\section{The proof of Theorem \ref{thm:stablefloer}}
This result roughly follows the ideas above, albeit under a slightly different setting. We start by describing the setup.

\subsection{Lagrangian fillings of Legendrian submanifolds inside $P \times S^{2k-1}$}
The exact Lagrangian submanifold that we will be considering here is of the form $L \times \R^k \subset (P \times \C^k,d\theta \oplus d\alpha_0)$, where
\[ \alpha_0 :=\frac{1}{2}\sum_{i=1}^k(x_idy_i-y_idx_i)\]
and $d\alpha_0=\omega_0$ is the standard symplectic form. This Lagrangian submanifold can be considered as an exact Lagrangian filling of a Legendrian submanifold inside $(P \times S^{2k-1},\theta \oplus \alpha_0)$ in the following way, where we recall that $(S^{2k-1},\alpha_0)$ is the standard contact form on the sphere $S^{2k-1} \subset \C^k$.

Choose a Weinstein neighbourhood of $L \subset (P,d\theta)$ which symplectically identifies a neighbourhood of $L$ with a neighbourhood of the zero-section of $(T^*L,d\lambda_L)$, where $\lambda_L=pdq$ denotes the Liouville form. The exactness of $L\subset (P,d\theta)$ implies that $\lambda_L=df+\theta$ holds inside this neighbourhood for some smooth function $f \colon T^*L \to \R$. Using a bump-function $\varphi$ supported in a neighbourhood of $L$ and replacing $\theta$ with form $\theta+d(\varphi f)$, we may thus assume that the primitive of the symplectic form vanishes along $L$ (recall that $L$ is embedded!). In other words, the non-compact Lagrangian submanifold $L \times \R^k \subset (P \times \C^k,d\theta \oplus \omega_0)$ is a cylinder over the Legendrian embedding
\[ L \times (\R^k \cap S^{2k-1}) \subset (P \times S^{2k-1},\theta \oplus \alpha_0)\]
of $L \times S^{k-1}$ outside of a compact subset.

We will call a (possibly time-dependent) Hamiltonian $H_s \colon P \times \C^k \to \R$ \emph{homogeneous at infinity} if it coincides with a function satisfying $H(p,r\mathbf{z})=r^2H(\mathbf{z})$ outside of a compact subset of $P \times \C^k$, where $r \in \R_{\ge 0}$, $p \in P$, and $\mathbf{z} \in \C^k$. Observe that the image of $L \times \R^k$ under the isotopy induced by a homogeneous Hamiltonian is still cylindrical over a Legendrian submanifold outside of a compact subset. To see this, we use the fact that $(t,\mathbf{z}) \mapsto e^{t/2}\mathbf{z}$ is the Liouville flow on $(\C^k \setminus \{0\},d\alpha_0)$. In other words, the latter symplectic manifold can be identified with the symplectisation of $(S^{2k-1},\alpha_0)$, and a homogeneous Hamiltonian induces an isotopy which is the lift of a contact isotopy on the latter contact manifold.

Recall that Floer homology again can be defined for Lagrangian fillings that are cylindrical over Legendrian submanifolds in the above sense, given that we e.g.~choose an almost complex structure which is cylindrical with respect to the convex end of the product Liouville manifold $(P \times \C^k,d\theta \oplus d\alpha_0)$. As usual, invariance of the Floer complex holds for compactly supported Hamiltonian perturbations.

\subsection{Making the Hamiltonian homogeneous at infinity (the proof of Theorem \ref{thm:stablefloer})}
Equip $P \times \C^k$ with the product metric $g_P \oplus g_{\OP{std}}$. By the assumptions of Theorem \ref{thm:stablefloer}, the Hamiltonian $H_s \colon P \times \C^k \to \R$ satisfies $H_s=f_s+Q$, where $f_s \colon P \times \C^k \to \R$, $s \in [0,1]$, has a uniform bound on $\|f_s\|_{C^1}$, and where $Q : \R^k \to \R$ is a non-degenerate quadratic form. In particular, the Hamiltonian vector field associated to $H_s$ is of the form
\[X_s=Y_s+i\nabla Q,\]
where $Y_s \in T(P \times \C^k)$ is uniformly bounded and where $\nabla Q$ denotes the gradient of $Q$ with respect to the Euclidean metric.
\begin{lem}
\label{lem:first}
For any Hamiltonian $H_s \colon P \times \C^k \to \R$ as in the assumption, the intersections
\[(L \times \R^k) \cap \phi^s_{H_s}(L \times \R^k) \subset P \times \C^k\]
for any $s \in [0,1]$ are all contained inside a fixed compact subset $K \subset P \times \C^k$, where this compact subset moreover may be taken to only depend on the norm $\max_{s \in [0,1]}\| df_s \|_{C^0}$.
\end{lem}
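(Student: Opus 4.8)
The goal is to show that the intersection points of $L \times \R^k$ with its image under the Hamiltonian flow generated by $H_s = f_s + Q$ stay in a fixed compact set, with the compact set depending only on the $C^0$-norm of $df_s$.

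The plan is to exploit the explicit splitting of the Hamiltonian vector field $X_s = Y_s + i\nabla Q$, where $Y_s$ is uniformly bounded (by $\max_s \|df_s\|_{C^0}$ and the metric) and $i\nabla Q$ is the linear vector field on $\R^k \subset \C^k$ generated by the quadratic form $Q$. I would first observe that the flow $\phi^s_{H_s}$ decomposes, up to a controlled error, into a bounded-displacement part coming from $Y_s$ and the linear flow of $i\nabla Q$ on the $\C^k$-factor. Concretely, let $\psi^s$ denote the time-$s$ flow of the linear (time-independent) vector field $i\nabla Q$ on $\C^k$; this is just a fixed element of $GL(2k,\R)$ for each $s \in [0,1]$, hence has operator norm bounded by some constant $C_Q$ depending only on $Q$. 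Writing the flow of $X_s$ via the variation-of-constants/Duhamel principle relative to $\psi^s$, one gets $\phi^s_{H_s} = \psi^s \circ \rho^s$ where $\rho^s$ is the flow of the time-dependent, conjugated vector field $(\psi^s)^{-1}_* Y_s$ on $P \times \C^k$; since $\|Y_s\|$ is uniformly bounded and $(\psi^s)^{-1}$ has bounded norm on $s \in [0,1]$, the displacement $d(\rho^s(x), x)$ is uniformly bounded by a constant $C_1$ depending only on $\max_s\|df_s\|_{C^0}$ (and on $Q$).

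Next I would translate a putative intersection point into a constraint. Suppose $q \in (L\times\R^k)\cap \phi^s_{H_s}(L\times\R^k)$, so $q = \phi^s_{H_s}(q')$ with $q,q' \in L \times \R^k$. Project to the $\C^k$-factor: writing $q = (p,\mathbf{z})$, $q' = (p',\mathbf{z}')$ with $\mathbf{z},\mathbf{z}' \in \R^k$, the decomposition gives $\mathbf{z} = \psi^s(\mathbf{w})$ where $\mathbf{w}$ lies within distance $C_1$ of $\mathbf{z}' \in \R^k$. Since $L$ is compact, the $P$-coordinates are automatically confined; the only thing to control is $\|\mathbf{z}\|$. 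The key point is that $\psi^s$ is the linear flow of $i\nabla Q$, and $i\nabla Q$ restricted to $\R^k$ points into the imaginary directions $i\R^k$ — so $\psi^s$ moves $\R^k$ "transversally" off itself in a quantitatively controlled way. More precisely, for a non-degenerate quadratic form $Q$ on $\R^k$, with $\nabla Q = A\mathbf{x}$ for a symmetric invertible $A$, one computes $\psi^s(\mathbf{x}+i\mathbf{y})$ explicitly in terms of $\cos(sA), \sin(sA)$-type blocks (diagonalising $A$), and the condition that both $\mathbf{w}$ and $\psi^s(\mathbf{w})$ are within a bounded distance $C_1$ of $\R^k$ forces $\|\mathbf{w}\|$ to be bounded, \emph{provided} $\sin(sA)$ is invertible with controlled inverse. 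The one subtlety is that for isolated values of $s$ the map $\sin(sA)$ can degenerate on eigenspaces where $s$ is a multiple of $\pi/\sqrt{|\lambda|}$; on such eigenspaces $\psi^s$ preserves $\R^k$, and there the confinement comes instead from $\cos(sA)$ being $\pm\mathrm{Id}$ combined with the fact that $\phi^s_{H_s}$ is a symplectomorphism together with the bounded part — i.e.\ one handles the "hyperbolic/elliptic-resonant" eigenspaces separately and reassembles. In all cases one extracts a bound $\|\mathbf{z}\| \le C_2(\max_s\|df_s\|_{C^0}, Q)$.

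The main obstacle I anticipate is making the eigenvalue-by-eigenvalue estimate uniform over $s \in [0,1]$ in the presence of these resonant values of $s$: one must argue that even when $\sin(sA)$ drops rank on some subspace, the combination of (a) $\psi^s$ being an \emph{isometry on the elliptic part} and a \emph{hyperbolic expansion/contraction on the rest}, and (b) the bounded perturbation $\rho^s$, still cannot produce intersections arbitrarily far out — because on the expanding hyperbolic directions a point of $\R^k$ pushed far out lands far from $\R^k + B(0,C_1)$, while on the elliptic directions the $C^0$-bound on $f_s$ directly controls the return. I would formalise this by decomposing $\C^k = \bigoplus V_j$ into the $\psi^s$-invariant generalised eigenspaces of $A$ and checking the claim on each; the bookkeeping is routine linear algebra but needs to be organised carefully so that the final constant depends only on $Q$ and $\max_{s\in[0,1]}\|df_s\|_{C^0}$, as claimed. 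Once $\|\mathbf{z}\|$ is bounded, compactness of $L$ finishes the proof: all intersection points lie in $L \times \overline{B^{2k}(0,C_2)}$, a fixed compact subset of $P \times \C^k$.
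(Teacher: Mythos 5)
There is a genuine error at the heart of your argument: you have computed the wrong linear model flow. In Theorem \ref{thm:stablefloer} the quadratic form satisfies $Q(x+iy)=Q(x)$, i.e.\ it depends only on $\mathfrak{Re}(\mathbf{z})$. Hence $\nabla Q=(A\mathbf{x},0)$ and $i\nabla Q=(0,A\mathbf{x})$, so the Hamiltonian flow $\psi^s$ of $Q$ is the \emph{shear} $(\mathbf{x},\mathbf{y})\mapsto(\mathbf{x},\mathbf{y}+sA\mathbf{x})$ (up to sign), not the unitary-type flow $e^{isA}$ with $\cos(sA)$, $\sin(sA)$ blocks that you diagonalise. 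Consequently the entire discussion of resonant values of $s$, invertibility of $\sin(sA)$, and elliptic versus hyperbolic eigenspaces analyses a map that never occurs here; there is nothing to "reassemble". Moreover, even after replacing $\psi^s$ by the correct shear, your quantitative input is too weak: you only use that the correction flow $\rho^s$ has displacement bounded by a constant $C_1$ uniformly in $s$. With the shear, the condition that a point with $\|\mathbf{y}\|\le C_1$ is carried by $\psi^s$ back onto $\{\mathbf{y}=0\}$ reads $s\|A\mathbf{x}\|\lesssim C_1$, which bounds $\|\mathbf{x}\|$ only by $C_1/(s\,\lambda_{\min}(A))$ — this blows up as $s\to 0$, so you do not get one compact set working for all $s\in[0,1]$, which is what the lemma asserts. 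The gap is repairable: the time-$s$ flow of a uniformly bounded vector field displaces points by at most $s\,C_1'$ (and the shear conjugation only costs a factor $1+\|A\|$), and with this $s$-proportional bound the shear picture immediately yields $\|A\mathbf{x}'\|\le(1+\|A\|)C_1'$ at any starting point $\mathbf{x}'$ of an intersection trajectory, hence a fixed compact set. But as written the proof does not close.

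For comparison, the paper's proof avoids any decomposition of the flow. It fixes $R\gg 0$ so that $\|i\nabla Q\|\ge C$ dominates the bounded field $Y_s$ outside $P\times\{\|\mathfrak{Re}(\mathbf{z})\|\le R\}$. A trajectory starting on $L\times\R^k$ with $\|\mathbf{x}\|>R$ then has its $\mathfrak{Im}$-component drifting monotonically in the (essentially fixed, since the real part moves at speed at most $\|df_s\|_{C^0}$) direction $A\mathbf{x}(0)$, so it cannot return to $\{\mathbf{y}=0\}$ within time $1$ and produces no intersection points; the remaining starting points form the compact set $L\times\{\|\mathbf{x}\|\le R\}$, whose image under $\phi^s_{H_s}$, $s\in[0,1]$, lies in a fixed compact $K$ depending only on $\max_s\|df_s\|_{C^0}$ (and $Q$). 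This direct domination-plus-monotone-drift argument is both shorter and automatically uniform in $s$, which is exactly the point your version struggles with near $s=0$.
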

\begin{proof}
Fix a constant $C>0$. Given that $R \gg 0$ is sufficiently large, the Hamiltonian vector field $Y_s+i\nabla Q$ may be supposed to satisfy $\|i\nabla Q\| \ge C$ in the complement of
$$P \times \{ \|\mathfrak{Re}(\mathbf{z})\| \le R \} \subset P \times \C^k.$$
In particular, the term $i\nabla Q$ may be assumed to be considerably larger than the Hamiltonian vector field induced by $f_s$ in the same complement. Since the image of
$$ (L \times \R^k)\cap (P \times \{ \| \mathfrak{Re}(\mathbf{z})\|\le R \})= L \times \{ \|\mathfrak{Re}(\mathbf{z})\| \le R \}$$
is compact, its image under $\phi^s_{H_s}$, $s \in [0,1]$, can be assumed to be contained inside a compact subset $K$ as in the assumption. The statement now follows.
\end{proof}

\begin{lem}
\label{lem:second}
After deforming the Hamiltonian $H_s \colon P \times \C^k \to \R$ outside of a compact subset, we may obtain a Hamiltonian $G_s \colon P \times \C^k \to \R$ which is homogeneous at infinity and for which
\[ (L \times \R^k) \cap \phi^1_{G_s}(L \times \R^k) = (L \times \R^k) \cap \phi^1_{H_s}(L \times \R^k)\]
is satisfied. We can moreover take $G_s=g_s+Q$ for $g_s \colon P \times \C^k \to \R$ compactly supported and $Q$ equal to the above non-degenerate quadratic form.
\end{lem}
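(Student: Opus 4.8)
The goal is to replace the quadratic-plus-bounded Hamiltonian $H_s = f_s + Q$ by a genuinely homogeneous-at-infinity Hamiltonian $G_s = g_s + Q$ without changing the relevant intersection points. The plan is to cut off the bounded part $f_s$ far away from the region where all the action happens, and then check that this cut-off only modifies the time-$1$ flow outside of a compact set containing the relevant intersections.

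\medskip

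\noindent\textbf{Step 1: Localise the intersection points.} First I would invoke Lemma~\ref{lem:first} to fix a compact set $K \subset P \times \C^k$, depending only on $\max_{s\in[0,1]}\|df_s\|_{C^0}$, which contains $(L\times \R^k)\cap \phi^s_{H_s}(L\times \R^k)$ for every $s\in[0,1]$. Then I would enlarge $K$ to a compact set $K'$ which also contains the entire image $\bigcup_{s\in[0,1]}\phi^s_{H_s}\big((L\times\R^k)\cap K''\big)$ of a sufficiently large compact piece $K''$ of $L\times\R^k$ under the flow; the point, as in the proof of Lemma~\ref{lem:first}, is that the dominant term $i\nabla Q$ pushes everything outside $K''$ far away from $L\times\R^k$, so that no new intersection point is created outside $K'$.

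\medskip

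\noindent\textbf{Step 2: Cut off the bounded part.} Choose a smooth cutoff $\chi \colon P\times\C^k \to [0,1]$ which is identically $1$ on a neighbourhood of $K'$ and identically $0$ outside a slightly larger compact set, and set
\[
g_s := \chi \cdot f_s, \qquad G_s := g_s + Q.
\]
Then $g_s$ is compactly supported and $G_s$ agrees with $Q$ outside a compact set, hence $G_s$ is homogeneous at infinity (indeed $Q(x_1+iy_1,\dots)=Q(x_1,\dots)$ is homogeneous of degree $2$, and the region where $G_s\ne Q$ is compact, so the required scaling behaviour $G(p,r\mathbf z)=r^2 G(\mathbf z)$ outside a compact set holds after, if necessary, first homogenising $Q$ away from the origin — but $Q$ is already quadratic so nothing is needed there). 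Moreover $G_s = g_s + Q$ with $g_s$ compactly supported and $Q$ the prescribed non-degenerate quadratic form, as required.

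\medskip

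\noindent\textbf{Step 3: Compare the two flows.} The vector fields $X_{H_s}$ and $X_{G_s}$ agree on the neighbourhood of $K'$ where $\chi\equiv 1$. I would then argue that the trajectory of $\phi^s_{H_s}$ starting at any point of $L\times\R^k$ either stays inside $K'$ for all $s\in[0,1]$ (in which case it is also a trajectory of $\phi^s_{G_s}$, since the two Hamiltonians agree there) or else leaves $K'$ and, by the choice of $K'$ in Step~1 together with the domination by $i\nabla Q$, never returns to $L\times\R^k$ — so it contributes no intersection point for either flow. A clean way to organise this is: the trajectories of $X_{H_s}$ through $L\times\R^k$ that end on $L\times\R^k$ at time $1$ are exactly those that remain in $K'$, and on $K'$ the two flows coincide; hence $\phi^1_{H_s}$ and $\phi^1_{G_s}$ restrict to the same map on the relevant part of $L\times\R^k$, giving
\[
(L\times\R^k)\cap \phi^1_{G_s}(L\times\R^k) = (L\times\R^k)\cap \phi^1_{H_s}(L\times\R^k).
\]

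\medskip

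\noindent\textbf{Main obstacle.} The delicate point is Step~3: one must rule out that a trajectory wanders out of $K'$ under $\phi^s_{H_s}$ (where it still sees the full $f_s$) and then comes back to land on $L\times\R^k$ at a point not in $K'$, while under $\phi^s_{G_s}$ (where $f_s$ has been switched off) it behaves differently. The resolution is the same mechanism that drives Lemma~\ref{lem:first}: outside a large compact set the component $i\nabla Q$ of the Hamiltonian vector field dominates, its $\mathfrak{Re}$-component is bounded below in norm, and it points transversally off $L\times\R^k = L\times\mathfrak{Re}^{-1}(\text{varies})$, so once a trajectory has drifted far enough in the $\mathfrak{Re}(\mathbf z)$-directions it cannot intersect $L\times\R^k$ at time $\le 1$. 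Making this quantitative — i.e.\ choosing $R$, and hence $K'$, large enough in terms of $\max_{s}\|df_s\|_{C^0}$ and the lower bound $\|i\nabla Q\|\ge C$ on the relevant region — is exactly the estimate already carried out in the proof of Lemma~\ref{lem:first}, so I would simply cite and reuse it. Everything else (smoothness of $\chi$, compact support of $g_s$, homogeneity of $G_s$ at infinity) is routine.
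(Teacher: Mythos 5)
Your proposal is correct and takes essentially the same route as the paper's proof: cut off $f_s$ by a compactly supported $\chi$ that equals $1$ on a compact set containing all the relevant $H_s$-trajectories (so those trajectories, and hence the old intersection points, are untouched), and exclude new intersection points by re-running the $i\nabla Q$-domination estimate of Lemma~\ref{lem:first} for the cut-off Hamiltonian $G_s=\chi\cdot f_s+Q$. The only detail worth adding is the one the paper states explicitly: the Hamiltonian vector field of $\chi\cdot f_s$ also contains the term coming from $f_s\,d\chi$, so one fixes $\|d\chi\|_{C^0}\le 1$ to obtain a $C^0$-bound depending only on $\|f_s\|_{C^0}$ and $\|df_s\|_{C^0}$ and not on the (a priori large) support of $\chi$, which is what allows the compact set to be enlarged without circularity.
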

\begin{proof}
The sought Hamiltonian will be taken to be of the form $G_s:=\chi\cdot f_s+Q$, with the corresponding Hamiltonian vector field $\widetilde{Y}_s+i\nabla Q$, for a smooth cut-off function $\chi \colon P \times \C^k \to [0,1]$ having compact support. It follows that this Hamiltonian is homogeneous at infinity. Observe that the vector field $\widetilde{Y}_s$ has a uniform $C^0$-bound expressed in terms of $\|d\chi\|_{C^0}$, $\|f_s\|_{C^0}$, and $\|df_s\|_{C^0}$.

The required behaviour concerning the intersections can be achieved in the following way. Take a smooth cut-off function satisfying $\chi \equiv 1$ in a sufficiently large subset, while satisfying the uniform bound $\|d\chi\|_{C^0} \le 1$.

In particular, we require that $\chi \equiv 1$ holds in the compact subset
\[\phi^{[0,1]}_{H_s}((\phi^1_{H_s})^{-1}(K)) \subset P \times \C^k,\]
foliated by Hamiltonian trajectories, where $K$ denotes the compact subset produced by Lemma \ref{lem:first}. This is done in order to ensure that the latter Hamiltonian trajectories all are unaffected by the cut-off function $\chi$.

After choosing the compact subset $K$ even larger, we may further assume that $\phi^s_{\chi\cdot f_s+Q}((L \times \R^k) \setminus K)$ is contained in a subset where $\|i \nabla Q\| \ge C$ holds, for an arbitrary fixed constant $C>0$. (In particular, the term $i \nabla Q$ can again be assumed to be considerably larger than the Hamiltonian vector field induced by either $f_s$ or $\chi \cdot f_s$ in the complement of $K$.) The sought property of the intersection points follows from this.
\end{proof}

\begin{lem}
\label{lem:third}
Let $G_s \colon P \times \C^k \to \R$ be a Hamiltonian of the form $g_s + Q$, where $g_s \colon P \times \C^k \to \R$ is compactly supported and $Q$ is a non-degenerate quadratic form on $\R^k$. For each $\epsilon>0$ sufficiently small, one can construct a Hamiltonian $\widetilde{G}_s \colon P \times \C^k \to \R$, where
\begin{enumerate}
\item $\widetilde{G}_s$ coincides with $\epsilon G_{\epsilon s}$ outside of $P \times B^{2k}_R$ for some $R \gg 0$ sufficiently large;
\item The intersection points satisfy
\[ (L \times \R^k) \cap \phi^1_{\widetilde{G}_s}(L \times \R^k)=(L \times \R^k) \cap \phi^1_{G_s}(L \times \R^k);\]
\item The two Lagrangian submanifolds $\phi^1_{\widetilde{G}_s}(L \times \R^k)$ and $\phi^\epsilon_{G_s+h_s}(L \times \R^k)$ are compactly supported Hamiltonian isotopic for any smooth and compactly supported Hamiltonian $h_s \colon P \times \C^k \to \R$.
\end{enumerate}
\end{lem}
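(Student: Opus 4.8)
My plan is to obtain $\widetilde{G}_s$ from $\epsilon G_{\epsilon s}$ by a cut-off/rescaling argument entirely parallel to the proof of Lemma \ref{lem:second}, exploiting the fact that rescaling time by $\epsilon$ and the Hamiltonian by $\epsilon$ leaves the time-$1$ flow of $\widetilde{G}_s$ equal to the time-$\epsilon$ flow of $G_s$ on the region where no cut-off is applied. First I would record the elementary reparametrisation identity: if one sets $H^\epsilon_s := \epsilon H_{\epsilon s}$, then $\phi^1_{H^\epsilon_s} = \phi^\epsilon_{H_s}$, since the time-dependent vector field generated by $H^\epsilon_s$ at time $s$ is $\epsilon$ times that of $H_{\epsilon s}$, and the $\epsilon$-factor is exactly absorbed by running for time $1$ instead of time $\epsilon$. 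Thus replacing $G_s$ by $\epsilon G_{\epsilon s}$ does not change the Lagrangian $\phi^\epsilon_{G_s}(L\times \R^k)$, only reparametrises the path reaching it; this is what will let us later compare with $\phi^\epsilon_{G_s + h_s}$ in item (3).

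Next I would carry out the cut-off. Write $\epsilon G_{\epsilon s} = \epsilon g_{\epsilon s} + \epsilon Q$, noting $\epsilon Q$ is again a non-degenerate quadratic form (with the same signature, hence the same index) on $\R^k$, and that $\epsilon g_{\epsilon s}$ is compactly supported with $C^1$-norm shrinking like $\epsilon$. As in Lemma \ref{lem:first}, choose $R \gg 0$ so large that $\|i\nabla(\epsilon Q)\| \ge C$ outside $P \times B^{2k}_R$ for a fixed constant $C>0$ dwarfing the contribution of $\epsilon g_{\epsilon s}$; then all the intersection points $(L\times\R^k)\cap\phi^1_{\epsilon G_{\epsilon s}}(L\times\R^k)$, which by the reparametrisation identity equal $(L\times\R^k)\cap\phi^\epsilon_{G_s}(L\times\R^k)$, lie inside $P\times B^{2k}_R$. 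Now pick a cut-off $\chi\colon P\times\C^k\to[0,1]$ with $\chi\equiv 1$ on the (compact) union of Hamiltonian trajectories $\phi^{[0,1]}_{\epsilon G_{\epsilon s}}\big((\phi^1_{\epsilon G_{\epsilon s}})^{-1}(K')\big)$ of the relevant strip connecting $L\times\R^k$ to its image, $K'$ a large compact neighbourhood of those intersection points, and with $\chi$ supported in $P \times B^{2k}_R$. Set
\[
\widetilde{G}_s := \chi \cdot (\epsilon g_{\epsilon s}) + \epsilon Q = \epsilon\big(\chi \cdot g_{\epsilon s} + Q\big).
\]
Outside $P\times B^{2k}_R$ this is $\epsilon Q$, which is precisely $\epsilon G_{\epsilon s}$ there, so (1) holds. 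The quadratic term still dominates in the complement of $K'$ (again choosing $K'$ larger if necessary, exactly as in the proof of Lemma \ref{lem:second}), so the trajectories defining the intersection points are unaffected by $\chi$, giving (2): $(L\times\R^k)\cap\phi^1_{\widetilde{G}_s}(L\times\R^k)=(L\times\R^k)\cap\phi^1_{\epsilon G_{\epsilon s}}(L\times\R^k)=(L\times\R^k)\cap\phi^1_{G_s}(L\times\R^k)$, where the last equality is the reparametrisation identity applied to $G_s$ itself.

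For item (3), I would argue that $\widetilde{G}_s$ and $\epsilon G_{\epsilon s}$ differ by a compactly supported Hamiltonian perturbation — indeed $\widetilde{G}_s - \epsilon G_{\epsilon s} = (\chi - 1)\epsilon g_{\epsilon s}$ is compactly supported — and that, by the standard fact that two Hamiltonians whose difference is compactly supported generate flows differing by a compactly supported Hamiltonian isotopy, $\phi^1_{\widetilde{G}_s}(L\times\R^k)$ is compactly supported Hamiltonian isotopic to $\phi^1_{\epsilon G_{\epsilon s}}(L\times\R^k) = \phi^\epsilon_{G_s}(L\times\R^k)$. Finally, given any compactly supported $h_s$, concatenating with the isotopy generated by (a suitably reparametrised) $h_s$ shows $\phi^\epsilon_{G_s}(L\times\R^k)$ is compactly supported Hamiltonian isotopic to $\phi^\epsilon_{G_s + h_s}(L\times\R^k)$; composing the two yields (3). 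I expect the only genuine subtlety — the analogue of the main obstacle already met in Lemma \ref{lem:second} — to be the bookkeeping ensuring that enlarging $R$ and $K'$ can be done consistently so that simultaneously (i) the cut-off region contains the relevant Floer/Hamiltonian strip and (ii) the quadratic term dominates outside $K'$ uniformly in $s\in[0,1]$; this is handled exactly as before, since $\|d(\epsilon g_{\epsilon s})\|_{C^0}$ and $\|d\chi\|_{C^0}$ are under control while $\|i\nabla(\epsilon Q)\|$ grows linearly in $\|\mathfrak{Re}(\mathbf z)\|$.
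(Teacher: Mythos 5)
Your construction does not prove item (2), and the source of the trouble is visible already in how you set up the cut-off. You write $\widetilde{G}_s := \chi\cdot(\epsilon g_{\epsilon s}) + \epsilon Q$, but $g_{\epsilon s}$ is \emph{already} compactly supported (this is a hypothesis of the lemma), so once $\chi\equiv 1$ on that support the cut-off is a no-op and your $\widetilde{G}_s$ is simply $\epsilon G_{\epsilon s}$; in any case the quadratic part of $\widetilde{G}_s$ is $\epsilon Q$ \emph{everywhere}, in particular near the intersection points. The reparametrisation identity you correctly record gives $\phi^1_{\epsilon G_{\epsilon s}} = \phi^\epsilon_{G_s}$, so what you actually obtain for (2) is
\[(L\times\R^k)\cap\phi^1_{\widetilde G_s}(L\times\R^k)=(L\times\R^k)\cap\phi^\epsilon_{G_s}(L\times\R^k),\]
not $(L\times\R^k)\cap\phi^1_{G_s}(L\times\R^k)$. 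Your final sentence of the proof of (2), which invokes the reparametrisation identity "applied to $G_s$ itself" to equate $\phi^1_{\epsilon G_{\epsilon s}}$-intersections with $\phi^1_{G_s}$-intersections, is exactly where the error sits: the identity produces a time-$\epsilon$ flow, not a time-$1$ flow, and for a generic $G_s = g_s + Q$ the sets $(L\times\R^k)\cap\phi^\epsilon_{G_s}(L\times\R^k)$ and $(L\times\R^k)\cap\phi^1_{G_s}(L\times\R^k)$ are genuinely different. Since (2) is precisely what transports the count of $\phi^1_{H_s}$-intersections to the Floer-theoretic side, this gap is not cosmetic.

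The missing idea is that the rescaling by $\epsilon$ must be applied to the quadratic part \emph{only at large radius}. The paper achieves this by choosing an increasing $\rho_\epsilon\colon\R_{\ge 0}\to\R_{\ge 0}$ with $\rho_\epsilon(t)=t$ on $[0,A]$ and $\rho_\epsilon(t)=\sqrt{\epsilon}\,t$ for $t\ge B$, and setting
\[\widetilde G_s := g_s + Q\!\left(\tfrac{\rho_\epsilon(\|\mathbf x\|)}{\|\mathbf x\|}\mathbf x\right).\]
For $\|\mathbf x\|\le A$ this equals $G_s$ on the nose, so the time-$1$ flows of $\widetilde G_s$ and $G_s$ agree near the intersection locus and (2) holds; for $\|\mathbf x\|\ge B$ (and past the support of $g_s$) it equals $\epsilon Q=\epsilon G_{\epsilon s}$, giving (1) and a compactly supported difference $G_s - (1/\epsilon)\widetilde G_{s/\epsilon}$, which is what drives the compactly supported isotopy in (3). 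In short, one must interpolate between $Q$ near the origin and $\epsilon Q$ at infinity; a cut-off of the (already compactly supported) $g_s$ cannot do this.
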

\begin{proof}
For any small $\epsilon>0$ we choose a suitable smooth function $\rho_\epsilon \colon \R_{\ge 0} \to \R_{\ge 0}$ satisfying $\rho_\epsilon'(t) > 0$, $\rho_\epsilon(t)=t$ for all $t \in [0,A]$, while $\rho_\epsilon(t)=\sqrt{\epsilon}t$ for all $t\ge B$, and where $B>A>0$ have been chosen sufficiently large. Using this function we then construct the Hamiltonian
\[ \widetilde{G}_s := g_s+Q((\rho_\epsilon(\|\mathbf{x}\|)/\|\mathbf{x}\|)\mathbf{x}).\]
Observe that, since $\mathbf{x} \mapsto (\rho_\epsilon(\|\mathbf{x}\|)/\|\mathbf{x}\|)\mathbf{x}$ is a diffeomorphism of $\R^k$ fixing the origin, the critical points of $Q((\rho_\epsilon(\|\mathbf{x}\|)/\|\mathbf{x}\|)\mathbf{x})$ correspond bijectively to the critical points of $Q$. More precisely, the unique critical point is still the origin $\mathbf{x}=0$.

(1): Given that $A \gg 0$ is chosen sufficiently large, a suitable deformation of $\widetilde{G}_s$ supported outside of some big compact subset of $P \times \{\|\mathfrak{Re}(\mathbf{z})\| \le A\}$ yields the desired Hamiltonian, which we again denote by $\widetilde{G}_s$.

(2): Again given that $A \gg 0$ was chosen sufficiently large, we may assume the following. In the subset of $P \times \C^k$, where $G_s$ and $\widetilde{G}_s$ differ, these Hamiltonians are of the form $Q(\mathbf{x})$ and $Q((\rho_\epsilon(\|\mathbf{x}\|)/\|\mathbf{x}\|)\mathbf{x})$, respectively. Moreover, we may assume that neither function has a critical point in this subset. The property now follows.

(3): Consider the image of $L \times \R^k$ under the one-parameter family
$$\lambda \mapsto \phi^{\epsilon}_{G_s+h_s-\lambda(h_s+G_s-(1/\epsilon)\widetilde{G}_{s/\epsilon})}, \:\: \lambda \in [0,1],$$
of Hamiltonian diffeomorphisms, where $h_s+G_s-(1/\epsilon)\widetilde{G}_{s/\epsilon}$ is compactly supported by part (1). Recall that
$$\phi^{\epsilon s}_{(1/\epsilon)F_{s/\epsilon}}(L \times \R^k)=\phi^s_{F_s}(L \times \R^k), \:\: s \in [0,1],$$
for any Hamiltonian $F_s$.
\end{proof}

\begin{lem}
\label{lem:forth}
For any given Morse function $f \colon L \to \R$, there exists a suitable compactly supported cut-off function $\chi \colon \R^k \to [0,1]$ for which there is an equivalence
\begin{align*}
&\quad\ (CM_{\bullet}(L,f;\Z[\pi_1(L)]),\partial_f) \\
&= (CM_{\bullet+\OP{index}(Q)}(L \times \R^k,\chi\cdot f+Q;\Z[\pi_1(L \times \R^k)]),\partial)
\end{align*}
of complexes.
\end{lem}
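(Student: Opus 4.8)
The plan is to show that after choosing the cut-off function $\chi$ suitably, the twisted Morse complex of the stabilised function $\chi \cdot f + Q$ on $L \times \R^k$ is literally (on the nose) identical to the twisted Morse complex of $f$ on $L$, with the grading shifted by $\OP{index}(Q)$. The key point is that for a product metric on $L \times \R^k$ and the split function $f \oplus Q$, the critical points and negative gradient flow lines of $f \oplus Q$ decompose as products, so one recovers the Morse complex of $f$ tensored with the (one-generator) Morse complex of $Q$; the cut-off $\chi$ is only needed to make $\chi \cdot f + Q$ compactly supported away from the $L$-directions without introducing new critical points or flow lines.

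First I would fix a Morse-Smale pair $(f, g_L)$ on $L$ and a non-degenerate quadratic form $Q$ on $\R^k$ with its standard metric $g_{\OP{std}}$, so that $Q$ has a unique critical point at the origin of Morse index $\OP{index}(Q)$, and all negative gradient trajectories of $Q$ other than the constant one escape to infinity. On the product $L \times \R^k$ with the product metric, the negative gradient vector field of $f \oplus Q$ is $(-\nabla_{g_L} f, -\nabla_{g_{\OP{std}}} Q)$, whose flow is the product of the two flows; hence $\OP{Crit}(f \oplus Q) = \OP{Crit}(f) \times \{0\}$, each critical point $(c,0)$ has index $\OP{index}_f(c) + \OP{index}(Q)$, and the rigid negative gradient flow lines of $f \oplus Q$ between $(c,0)$ and $(c',0)$ project bijectively (and in a homotopy-class-preserving way, since $\R^k$ is contractible so $\pi_1(L\times\R^k)=\pi_1(L)$) onto the rigid negative gradient flow lines of $f$ between $c$ and $c'$. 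This gives the claimed equality of complexes for $f \oplus Q$ in place of $\chi \cdot f + Q$. Next I would introduce the cut-off: choose $\chi \colon \R^k \to [0,1]$ equal to $1$ on a large ball $B^k_\rho$ containing (the projection to $\R^k$ of) nothing but a neighbourhood of the origin, and equal to $0$ outside a slightly larger ball, with $\|d\chi\|_{C^0}$ as small as we like by spreading the transition region out. Since $\|df\|_{C^0}$ and $\|f\|_{C^0}$ are bounded on the closed manifold $L$, for $\chi$ flat enough the perturbation term $d(\chi) \otimes f$ is dominated by $\nabla Q$ in the region $\{\chi \not\equiv 1\}$, so $\chi \cdot f + Q$ has no critical points there and no new critical points anywhere; its critical set is still exactly $\OP{Crit}(f)\times\{0\}$ with the same indices. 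A standard Morse-theoretic argument (an isotopy from $f\oplus Q$ to $\chi\cdot f+Q$ through functions that are Morse with a fixed critical set and whose gradient fields agree near the critical points and point uniformly ``outward'' in the $\R^k$-directions outside a compact set) shows the two negative-gradient moduli spaces, and hence the two differentials with twisted coefficients, coincide.

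The main obstacle I expect is controlling the broken and non-compact trajectories of $\chi \cdot f + Q$: one must verify that no gradient flow line of the cut-off function can wander out into the region where $\chi$ varies, break there, or return, so that the moduli spaces genuinely stay inside the compact region where $\chi \equiv 1$ and thus match those of $f \oplus Q$. This is handled exactly by the estimate $\|i\nabla Q\|$ (here $\|\nabla Q\|$) being bounded below by a large constant $C$ outside a ball, precisely as in the proofs of Lemmas \ref{lem:first}--\ref{lem:third} above: the $\R^k$-component of the gradient has a definite outward-pushing sign on $\{\chi \not\equiv 1\}$, so once a trajectory leaves the $\chi\equiv 1$ region it never comes back and cannot be part of a rigid configuration between the critical points $(c,0)$. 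With that in hand the identification of the two complexes — underlying $\Z[\pi_1(L)]$-modules, gradings (shifted by $\OP{index}(Q)$), and differentials — is immediate.
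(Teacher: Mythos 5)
Your proof is correct and takes essentially the same approach as the paper: regard $L\times\{0\}$ as the Morse--Bott critical manifold of $Q$, take $\chi\equiv 1$ on a large ball with $\|d\chi\|_{C^0}$ small so that the extra term $f\,\nabla\chi$ is dominated by $\nabla Q$ on $\operatorname{supp}d\chi$, and conclude that $\operatorname{Crit}(\chi\cdot f+Q)=\operatorname{Crit}(f)\times\{0\}$ with indices shifted by $\OP{index}(Q)$ and that connecting trajectories remain where the dynamics split, so the twisted complexes agree. One minor imprecision: for indefinite $Q$ the negative gradient is not literally ``outward-pushing'' on $\{\chi\neq 1\}$ (it points inward along the stable directions); the clean way to exclude excursions is to observe that $Q(x(t))$ is non-increasing along any trajectory (strictly decreasing unless the $\R^k$-component is constant at $0$), which is incompatible with both asymptotics lying in $\operatorname{Crit}(f)\times\{0\}$ --- this is exactly the paper's assertion that connecting flow lines cannot leave $L\times\{0\}$ for the product metric.
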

\begin{proof}
This can be seen by using the following standard technique. The function $Q \colon L \times \R^k \to \R$ has a non-degenerate critical manifold $L \times \{0\}$ in the Bott sense. We proceed to construct a suitable Morse function of the form $\chi\cdot f +Q$ that gives rise to the sought complex.

First we compute
\[ \nabla(\chi \cdot f+Q) =\chi\nabla f+X+\nabla Q \]
where $\OP{supp}X \!\subset\! \OP{supp}d\chi$ and also $\|X\|_{C^0} \!\le\! \|d\chi\|_{C^0} \|f\|_{C^0}$ are satisfied. Choosing the cut-off function $\chi$ to satisfy $\|d\chi\|_{C^0} \le 1$ together with $\OP{supp}d\chi \subset L \times (\R^k \setminus B^k_R)$ for some sufficiently large $R \gg 0$ the statement follows. Namely, in this case, all critical points of $\chi \cdot f+Q$ are contained inside $L \times \{0\} \subset L \times \R^k$ and correspond to critical points of $f$. Moreover, we can assume that a gradient flow line which connects two critical points cannot leave $L \times \{0\}$ for the product metric.
\end{proof}

We are now ready to prove Theorem \ref{thm:stablefloer}. First, by Lemma \ref{lem:second} we may replace $H_s$ by a Hamiltonian $G_s=g_s+Q$ being homogeneous at infinity, where $g_s$ has compact support and where $Q$ is a nondegenerate quadratic form. Choose a Morse function $f \colon L \to R$ and take a cutoff function $\chi$ as provided by Lemma \ref{lem:forth}. A standard computation in Floer homology (see Proposition \ref{prop:morsecomplex}) together with Lemma \ref{lem:forth} now gives an equality
\begin{align*}
&\quad\ (CF_\bullet(L\times \R^k,\phi^\epsilon_{h+Q}(L \times \R^k);\Z[\pi_1(L_\Lambda)]),\partial)\\&=(CM_{\bullet-\OP{index}(Q)}(L,f;\Z[\pi_1(L)]),\partial_f)
\end{align*}
of complexes for a suitable extension $h$ of $\chi \cdot f$ to all of $P \times \C^k \supset L \times \R^k$, and where $\epsilon>0$ must be chosen sufficiently small. Applying Theorem \ref{thm:invariance} to the compactly supported Hamiltonian isotopy produced by part (3) of Lemma \ref{lem:third} we obtain a simple homotopy equivalence from the latter Morse homology complex to a Floer homology complex generated by the intersection points  $(L \times \R^k) \cap \phi^1_{\widetilde{G}_s}(L \times \R^k)$, where $\widetilde{G}_s$ is produced by the same lemma applied to $G_s$. Finally, using part (2) of Lemma \ref{lem:third}, we may assume that
$$(L \times \R^k) \cap \phi^1_{\widetilde{G}_s}(L \times \R^k)=(L \times \R^k) \cap \phi^1_{G_s}(L \times \R^k),$$
where we recall that the latter intersection points are equal to $(L \times \R^k) \cap \phi^1_{H_s}(L \times \R^k)$ by the construction of $G_s$. It is now simply a matter of applying Proposition \ref{prop:morse} in order to obtain the sought inequality.

\section{Adaptation of the estimates of Ono-Pajitnov}\label{estimPajOnogensection}

In this section, we describe  lower bounds for the number of Reeb chords on $\Lambda$ in terms of $d(G)$ and $\delta(G)$, where $G$ is a group which is an epimorphic image of $\pi_1(L_{\Lambda})$.

The estimates here are all obtained by direct applications of the results from \cite{OTFPOAHDIPOFG} by Ono-Pajitnov concerning the number of generators of $\pi_1$-equivariant complexes enforced by the complexity of the fundamental group $\pi_1$. These results are related to invariants due to Sharko \cite{FOMAATA}. The algebro-topological results from \cite{OTFPOAHDIPOFG} applies to complexes that are $\pi_1$-equivariantly homotopy equivalent to a complex induced by a $\pi_1$-equivariant CW-complex being the universal cover of a \emph{connected} space having fundamental group equal to $\pi_1$ (after possibly reducing the grading $\Z \to \Z / \mu \Z$). In particular, they can be applied to our setting.

In fact, the latter article also provided important inspiration to our work here. The original application of the algebro-topological results therein was to establish lower bounds for the number of periodic orbits of time-dependent Hamiltonian vector field on a closed and weakly monotone symplectic manifold under the assumption that all periodic orbits are non-degenerate.

In the following we will use the notation $c_i:=|\mathcal{Q}_i(\Lambda)|$ for the number of Reeb chords in degree $i$ modulo the Maslov number of $L_\Lambda$. We also write $n:=\dim \Lambda$.

\subsection{Proof of Theorem~\ref{intrineqabssi}}
Let $L_\Lambda$ be an exact Lagrangian filling of a Legendrian $\Lambda\subset P\times \R$ such that $L_{\Lambda}$ is spin and $\mu_{L_{\Lambda}}=0$. In this case, the first part of Theorem~\ref{mainthmsmplehomeqnts} holds. Now we adapt the results of Ono-Pajitnov from \cite[Section 5.1]{OTFPOAHDIPOFG} to this settings.

Using \cite[Theorem 3.10]{OTFPOAHDIPOFG} and \cite[Corollary 3.11]{OTFPOAHDIPOFG}, we get the following proposition.
\begin{prop}\label{firstineqfordanddeltafinim}
The following estimates hold:
\begin{itemize}
\item If $|\pi_1(L_{\Lambda})|>1$, then $c_{n-1}(\Lambda)\geq 1$.
\item If $\pi_1(L_{\Lambda})$ admits a group epimorphism $\pi_1(L_{\Lambda})\to G$ with $|G|>1$, then
\begin{itemize}
\item[(i)] $c_{n-1}(\Lambda)\geq \delta(G)$;
\item[(ii)] $c_{n-1}(\Lambda)\geq d(G)$ if $G$ is simple or solvable;
\item[(iii)] $c_{n-1}(\Lambda)\geq 2$ if $G$ is not cyclic.
\end{itemize}
\end{itemize}
\end{prop}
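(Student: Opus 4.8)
The plan is to transfer the algebraic results of Ono--Pajitnov to the complex $(\Z[\pi]\langle \mathcal{Q}_{n-\bullet}(\Lambda)\rangle, \partial)$ furnished by part (i) of Theorem~\ref{mainthmsmplehomeqnts}. By that theorem, this complex is simple homotopy equivalent to the twisted Morse complex $(CM_\bullet(L_\Lambda,f;\Z[\pi]),\partial_f)$ of the connected manifold $L_\Lambda$ with $\pi=\pi_1(L_\Lambda)$. In particular, up to $\pi$-equivariant homotopy equivalence, our complex is the cellular chain complex of the universal cover of a finite CW-model of $L_\Lambda$, so the hypotheses of the algebro-topological results in \cite[Section~3]{OTFPOAHDIPOFG} are met. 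The number of generators of our complex in degree $n-i$ equals $c_i(\Lambda)=|\mathcal{Q}_i(\Lambda)|$, so each lower bound on the number of equivariant generators in a given degree of such a complex translates directly into a lower bound on some $c_i(\Lambda)$.

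Next I would locate the correct degree. For a connected manifold the twisted Morse/cellular complex $C_\bullet$ over $\Z[\pi]$ has $C_0=\Z[\pi]$ with a single generator (the unique index-$0$ critical point, or the $0$-cell), and the tail $C_1 \xrightarrow{\partial_1} C_0 \to 0$ computes $H_0=\Z$ (trivial $\pi$-module), so $\partial_1$ surjects onto the augmentation ideal $I_\pi \subset \Z[\pi]$. Hence the number of generators of $C_1$ is at least $\delta(\pi)$, the minimal number of $\Z[\pi]$-module generators of $I_\pi$; and if $\pi$ surjects onto $G$, base-changing along $\Z[\pi]\to\Z[G]$ shows the same generator count is at least $\delta(G)$. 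This is precisely the content invoked via \cite[Theorem~3.10, Corollary~3.11]{OTFPOAHDIPOFG}. Under our grading dictionary the degree-$1$ generators of the Morse complex correspond to degree-$(n-1)$ Reeb chords, giving $c_{n-1}(\Lambda)\ge \delta(G)$, which is item (i); item (ii) then follows from the theorem of \cite{IR} cited above, namely $d(G)=\delta(G)$ for $G$ finite simple or solvable; item (iii) is the refinement in \cite{OTFPOAHDIPOFG} that a non-cyclic $G$ forces $\delta(G)\ge 2$ (one cannot kill the augmentation ideal with a single element when the abelianisation already needs two generators, and more is true for non-cyclic groups); and the first bullet, $c_{n-1}(\Lambda)\ge 1$ when $|\pi_1(L_\Lambda)|>1$, is the special case $G=\pi_1(L_\Lambda)$, where $I_G\ne 0$ forces at least one generator in degree~$1$. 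One must be slightly careful that all statements are applied with $G$ an epimorphic image of $\pi$, not $\pi$ itself; this is legitimate because $H_0$ of the complex with $\Z[G]$-coefficients is still the trivial module $\Z$, so the same surjectivity-onto-$I_G$ argument applies after the change of rings.

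The main obstacle is bookkeeping rather than hard analysis: one has to make sure that the degree shift $|c|\mapsto n-|c|$ from Section~\ref{sec:grading} and Proposition~\ref{prop:disp} is applied consistently, that ``spin and $\mu_{L_\Lambda}=0$'' genuinely places us in the $\Z$-graded, $\Z[\pi]$-coefficient situation where the simple homotopy equivalence of Theorem~\ref{mainthmsmplehomeqnts}(i) holds (so that simple homotopy invariance, and in particular preservation of the minimal number of generators per degree of \cite{OTFPOAHDIPOFG}, can be invoked), and that the Ono--Pajitnov results are quoted in the form that bounds generators of an arbitrary equivariant complex homotopy equivalent to the universal-cover cellular complex, not merely the minimal such complex. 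Once these identifications are in place, the proposition is an immediate translation: apply \cite[Theorem~3.10]{OTFPOAHDIPOFG} and \cite[Corollary~3.11]{OTFPOAHDIPOFG} to $(\Z[\pi]\langle\mathcal{Q}_{n-\bullet}(\Lambda)\rangle,\partial)$ in degree corresponding to $n-1$, and read off the four claimed inequalities.
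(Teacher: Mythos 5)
Your proposal is correct and follows essentially the same route as the paper: the paper's proof consists precisely of feeding the complex $(\Z[\pi]\langle\mathcal{Q}_{n-\bullet}(\Lambda)\rangle,\partial)$ supplied by Theorem~\ref{mainthmsmplehomeqnts}(i) (valid since $L_\Lambda$ is spin with $\mu_{L_\Lambda}=0$) into \cite[Theorem~3.10, Corollary~3.11]{OTFPOAHDIPOFG}, with the same degree dictionary identifying degree-$1$ generators with Reeb chords of degree $n-1$. Your added augmentation-ideal sketch is only heuristic (it assumes a single degree-$0$ generator), but you correctly defer the general case to the cited Ono--Pajitnov results, exactly as the paper does.
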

From  \cite[Corollary 3.20]{OTFPOAHDIPOFG} and \cite[Corollary 3.28]{OTFPOAHDIPOFG} we get the following bound.
\begin{prop}\label{secineqfornmin2usdirref}
If $|\pi_1(L_{\Lambda})|<\infty$, then
\begin{itemize}
\item[(i)]  $c_{n-2}(\Lambda)\geq \delta(\pi_1(L_{\Lambda})) - \dim H_1(L_{\Lambda};\F) + \dim H_2(L_{\Lambda};\F)$ for every field $\F$;
\item[(ii)] $c_{n-2}(\Lambda)\geq  \dim H_2(L_{\Lambda};\F) + 2$ if $\pi_1(L_{\Lambda})$ is perfect;
\end{itemize}
\end{prop}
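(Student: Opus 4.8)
The plan is to transport the statement, via Theorem~\ref{mainthmsmplehomeqnts}(i), into the algebraic framework of Ono--Pajitnov and then quote their estimates. First I would invoke Theorem~\ref{mainthmsmplehomeqnts}(i): in the present subsection $L_\Lambda$ is spin with $\mu_{L_\Lambda}=0$, so the twisted Morse complex $(CM_\bullet(L_\Lambda,f;\Z[\pi]),\partial_f)$ is $\pi$-equivariantly homotopy equivalent to the finitely generated free $\Z[\pi]$-complex $(\Z[\pi]\langle \mathcal{Q}_{n-\bullet}(\Lambda)\rangle,\partial)$. Since $f$ satisfies $df(\partial_t)>0$ near the positive end, its gradient points outwards along $\partial\overline{L_\Lambda}=\Lambda$, and hence (with the convention that the differential counts negative gradient trajectories) the Morse complex computes $H_\bullet(\overline{L_\Lambda};\Z[\pi])\cong H_\bullet(\widetilde{\overline{L_\Lambda}};\Z)$, the integral homology of the universal cover; here $\overline{L_\Lambda}$ is the compact part, onto which $L_\Lambda$ deformation retracts, so $\pi_1(\overline{L_\Lambda})=\pi$ and $H_\bullet(\overline{L_\Lambda})=H_\bullet(L_\Lambda)$. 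Thus $(\Z[\pi]\langle \mathcal{Q}_{n-\bullet}(\Lambda)\rangle,\partial)$ is $\pi$-equivariantly homotopy equivalent to the $\Z[\pi]$-cellular chain complex of the universal cover of the connected manifold $\overline{L_\Lambda}$ with $\pi_1=\pi$ --- precisely the class of complexes to which \cite{OTFPOAHDIPOFG} applies.

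Next I would fix the degree dictionary. By the grading convention of Section~\ref{sec:grading} (see Lemma~\ref{lem:morsegrading} and Proposition~\ref{prop:disp}), the free summand in homological degree $k$ of $(\Z[\pi]\langle \mathcal{Q}_{n-\bullet}(\Lambda)\rangle,\partial)$ has rank $|\mathcal{Q}_{n-k}(\Lambda)|=c_{n-k}(\Lambda)$, while its homology in degree $k$, after base change to a field $\F$, is $H_k(\overline{L_\Lambda};\F)=H_k(L_\Lambda;\F)$. In particular this complex has $c_{n-2}(\Lambda)$ generators in degree $2$, with degree-$1$ and degree-$2$ homology over $\F$ given by $H_1(L_\Lambda;\F)$ and $H_2(L_\Lambda;\F)$. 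Since $|\pi_1(L_\Lambda)|<\infty$ by hypothesis, \cite[Corollary 3.20]{OTFPOAHDIPOFG} bounds the number of degree-$2$ generators of any such complex below by $\delta(\pi_1(L_\Lambda))-\dim H_1(L_\Lambda;\F)+\dim H_2(L_\Lambda;\F)$, giving (i); if in addition $\pi_1(L_\Lambda)$ is perfect, \cite[Corollary 3.28]{OTFPOAHDIPOFG} gives the lower bound $\dim H_2(L_\Lambda;\F)+2$, giving (ii).

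The actual work is confined to verifying the hypotheses of the cited corollaries, and the points needing care are: (a) the handedness of the gradient, so that the Morse complex computes $H_\bullet(\overline{L_\Lambda})$ rather than $H_\bullet(\overline{L_\Lambda},\Lambda)$, and the standard identification of twisted homology over $\Z[\pi]$ with the integral homology of the universal cover; and (b) keeping the degree shift $\bullet\mapsto n-\bullet$ and the passage from $\Z[\pi]$-coefficients to $\F$-coefficients consistent, so that ``$c_{n-2}$ generators in degree $2$'' and ``$H_1, H_2$ of $L_\Lambda$'' match the inputs of \cite[Corollaries 3.20 and 3.28]{OTFPOAHDIPOFG}. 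Both are routine bookkeeping. The genuine content --- the lower bound, for finite $\pi$, on the number of free generators forced in degree $2$ of such a complex by the interplay of $H_1$, $H_2$ and the complexity $\delta(\pi)$, and the extra $+2$ in the perfect case --- is exactly those two corollaries, which I would cite rather than reprove; I expect the only real obstacle to be confirming that they are stated in the required generality (arbitrary $\F$; complexes merely homotopy equivalent to, not equal to, a universal-cover complex; no hidden minimality hypothesis on the low-degree modules, which in any case would be harmless here since $\overline{L_\Lambda}$ admits a CW structure with a single $0$-cell). For orientation, the $\delta(\pi)$ term reflects that $C_2$ surjects --- using $H_1(\widetilde{\overline{L_\Lambda}};\Z)=0$ --- onto the relation module $Z_1$ of the presentation $0\to Z_1\to \Z[\pi]^{c_{n-1}}\to I_\pi\to 0$ of the augmentation ideal, and the sharp lower bound on the number of generators of $Z_1$ is the representation-theoretic heart of \cite{OTFPOAHDIPOFG}.
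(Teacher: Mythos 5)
Your proposal is correct and follows essentially the same route as the paper: the paper's proof of this proposition consists precisely of invoking Theorem~\ref{mainthmsmplehomeqnts}(i) (in the spin, $\mu_{L_\Lambda}=0$ setting fixed at the start of Section~\ref{estimPajOnogensection}, where the twisted Morse complex is identified with a complex of the universal-cover type to which \cite{OTFPOAHDIPOFG} applies) and then citing \cite[Corollaries 3.20 and 3.28]{OTFPOAHDIPOFG} for the degree-$2$ generator counts. Your additional bookkeeping (outward-pointing gradient giving absolute homology of $\overline{L_\Lambda}$, and the degree dictionary $c_{n-2}\leftrightarrow$ degree-$2$ generators) just makes explicit what the paper leaves implicit.
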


Propositions~\ref{firstineqfordanddeltafinim} and \ref{secineqfornmin2usdirref} lead to Theorem~\ref{intrineqabssi}.

\subsection{Proof of Theorem~\ref{intrineqrelsimaslnonz}}
In the case when we do not make the assumption that $\mu_{L_{\Lambda}}=0$ and that $L_{\Lambda}$ is spin, then the second part of Theorem~\ref{mainthmsmplehomeqnts} holds.
Then using algebraic machinery described in \cite[Section 4]{OTFPOAHDIPOFG}, we get the following estimates, which are reformulations of the estimates from \cite[Section 5.2]{OTFPOAHDIPOFG}.

\begin{prop}\label{fthgabwithoutassumption}
The following estimates hold:
\begin{itemize}
\item If $|\pi_1(L_{\Lambda})|>1$, then $c_{n-1}(\Lambda)\geq 1$.
\item If $\pi_1(L_{\Lambda})$ admits an epimorphism onto a finite group $G$, then
\begin{itemize}
\item[(i)] $c_{n-1}(\Lambda)\geq \max (1,\delta(G)-1)$, and $|\mathcal Q(\Lambda)|\geq \delta(G)$;
\item[(ii)] if $G$ is simple or solvable, then $|\mathcal Q(\Lambda)|\geq d(G)$;
\item[(iii)] if $G$ is not cyclic, then $|\mathcal Q(\Lambda)|\geq 2$.
\end{itemize}
\end{itemize}
\end{prop}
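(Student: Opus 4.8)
The plan is to feed the $G$-equivariant complex produced by the second part of Theorem~\ref{mainthmsmplehomeqnts} into the algebraic machinery of \cite[Section~4]{OTFPOAHDIPOFG}, which is the $\Z/N\Z$-graded counterpart of the $\Z$-graded results of \cite[Section~3]{OTFPOAHDIPOFG} used in the proof of Theorem~\ref{intrineqabssi}. Thus the argument will run in parallel to \cite[Section~5.2]{OTFPOAHDIPOFG}, once the relevant complex has been put in the required form.

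First I would set up the dictionary between the Floer-theoretic data and the topological input of \cite{OTFPOAHDIPOFG}. Write $\pi := \pi_1(L_\Lambda)$, and let $G$ be a finite quotient of $\pi$. Since the Morse function $f$ satisfies $df(\partial_t)>0$ outside of a compact set, the negative gradient flow retracts $L_\Lambda$ onto its compact part $\overline{L_\Lambda}$, so that $(CM_\bullet(L_\Lambda,f;\Z[\pi]),\partial_f)$ is $\pi$-equivariantly chain homotopy equivalent to the cellular chain complex of the universal cover of the connected space $\overline{L_\Lambda}$; the complex $(CM_\bullet(L_\Lambda,f;R[G]),\partial_f)$ is then obtained from it by base change along $\Z[\pi]\to R[G]$. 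Hence it is precisely of the type to which \cite[Sections~3--4]{OTFPOAHDIPOFG} apply, the only difference being that, by Theorem~\ref{mainthmsmplehomeqnts}(ii), it is moreover $G$-equivariantly homotopy equivalent to $(R[G]\langle\mathcal{Q}_{n-\bullet}(\Lambda)\rangle,\partial)$, whose grading is only taken in $\Z/\mu_{L_\Lambda}\Z$. Under this equivalence $c_i(\Lambda)$ becomes the number of free $R[G]$-generators in degree $i$, and $|\mathcal{Q}(\Lambda)|=\sum_i c_i(\Lambda)$ the total number of generators. Recall that $R=\Z_2$ is always admissible, while an arbitrary unital commutative ring is admissible when $L_\Lambda$ is spin.

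With this identification in hand, the first bullet follows from a direct cokernel computation with $\Z_2[\pi]$-coefficients: the degree-$0$ part of the complex is $\Z_2[\pi]\langle\mathcal{Q}_n(\Lambda)\rangle$, its quotient by the image of the differential coming from degree $n-1$ is $H_0(\widetilde{L_\Lambda};\Z_2)=\Z_2$, and a non-zero free $\Z_2[\pi]$-module cannot be isomorphic to $\Z_2$ once $|\pi|>1$; hence $c_{n-1}(\Lambda)\ge 1$. The remaining estimates are then obtained by transcribing \cite[Section~5.2]{OTFPOAHDIPOFG}: the reductions modulo $\mu_{L_\Lambda}$ of \cite[Theorem~3.10, Corollaries~3.11, 3.20, 3.28]{OTFPOAHDIPOFG} yield $|\mathcal{Q}(\Lambda)|\ge\delta(G)$ together with $c_{n-1}(\Lambda)\ge\max(1,\delta(G)-1)$, as well as $|\mathcal{Q}(\Lambda)|\ge 2$ when $G$ is non-cyclic; feeding in the equality $d(G)=\delta(G)$ from \cite{IR}, valid for $G$ simple or solvable, gives the stated refinements.

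The step I expect to be the main obstacle is precisely the passage from the $\Z$-graded setting of Theorem~\ref{intrineqabssi} to the $\Z/\mu_{L_\Lambda}\Z$-graded setting here. When the grading is cyclic, the ``bottom'' and the ``top'' of the complex are identified, so the argument that concentrates $\delta(G)$ generators in the single degree $n-1$ only survives up to an ambiguity of one generator -- this is the source of the bound $\max(1,\delta(G)-1)$ rather than $\delta(G)$ -- whereas the total count $|\mathcal{Q}(\Lambda)|\ge\delta(G)$ is unaffected. Making this precise is exactly the purpose of the mod-$N$ machinery of \cite[Section~4]{OTFPOAHDIPOFG}, so once the dictionary above is established the remaining work is bookkeeping, with a minor additional point being to track which coefficient rings are admissible ($\Z_2$ always, and any ring when $L_\Lambda$ is spin), which is read off directly from Theorem~\ref{mainthmsmplehomeqnts}(ii).
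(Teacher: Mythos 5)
Your overall route is the same as the paper's: the paper proves this proposition by feeding the $G$-equivariant complex from Theorem~\ref{mainthmsmplehomeqnts}(ii) (identified, via the retraction of $L_\Lambda$ onto $\overline{L_\Lambda}$, with a complex equivariantly homotopy equivalent to the cellular complex of the universal cover) into the mod-$N$ machinery of \cite[Section 4]{OTFPOAHDIPOFG}, the statements being reformulations of \cite[Section 5.2]{OTFPOAHDIPOFG}, with $d(G)=\delta(G)$ from \cite{IR} giving item (ii); your dictionary and bookkeeping reproduce exactly this. The one place where you depart from the paper is your ``direct cokernel computation'' for the first bullet, and as written it does not survive the $\Z/\mu_{L_\Lambda}\Z$-grading that is in force here: in a cyclically graded complex the degree-$\bar 0$ part has an outgoing differential to degree $\overline{\mu-1}$, so $C_0/\operatorname{im}\partial_1$ is not the degree-$\bar 0$ homology, and that homology is $\bigoplus_{i\equiv 0\,(\mu)}H_i(\widetilde{L_\Lambda};\Z_2)$ rather than just $H_0$; moreover the contradiction you invoke fails because the trivial module $\Z_2$ can perfectly well occur as a (summand of a) submodule of a free $\Z_2[\pi]$-module, e.g.\ the span of the norm element $\sum_{g\in\pi}g$ when $\pi$ is finite. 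Your argument is valid for the $\Z$-graded Proposition used in Theorem~\ref{intrineqabssi}, but in the present cyclic setting the first bullet should also be taken from the Section~4 machinery of \cite{OTFPOAHDIPOFG} (as the paper does), which is consistent with your own remark that the wrap-around of the grading is precisely what that machinery is designed to control.
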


\begin{prop}\label{sthsdjjkdfjwithoutas}
If $\mu_{L_{\Lambda}}\geq 2n+2$ and $\pi_1(L_{\Lambda})$ admits an epimorphism
onto a finite group $G$, then we have
\begin{itemize}
\item[(i)] $c_{n-1}(\Lambda) \geq \delta(G)$;
\item[(ii)] if $G$ solvable or simple, then $c_{n-1}(\Lambda) \geq d(G)$;
\item[(iii)] if $G$ is not cyclic, then $c_{n-1}(\Lambda) \geq 2$.
\end{itemize}
\end{prop}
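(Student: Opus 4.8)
The plan is to deduce this exactly as in the $\mu_{L_\Lambda}=0$ situation treated in Propositions~\ref{firstineqfordanddeltafinim} and \ref{secineqfornmin2usdirref}, namely by feeding the second part of Theorem~\ref{mainthmsmplehomeqnts} into the algebraic machinery of Ono--Pajitnov.

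First I would invoke Theorem~\ref{mainthmsmplehomeqnts}(ii): the complex $(R[G]\langle\mathcal{Q}_{n-\bullet}(\Lambda)\rangle,\partial)$, whose free $R[G]$-rank in degree $j$ equals $c_{n-j}(\Lambda)$, is homotopy equivalent in the category of $G$-equivariant complexes graded over $\Z/\mu_{L_\Lambda}\Z$ to the twisted Morse complex $(CM_\bullet(L_\Lambda,f;R[G]),\partial_f)$, for the coefficient ring $R$ supplied by that theorem. The twisted Morse complex is identified with the reduction modulo $\mu_{L_\Lambda}$ of the equivariant cellular chain complex of the $G$-cover $L^G\to\overline{L_\Lambda}$ determined by the given epimorphism $\pi_1(L_\Lambda)\twoheadrightarrow G$; this epimorphism being surjective and $\overline{L_\Lambda}\simeq L_\Lambda$ being connected, $L^G$ is connected. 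Hence $(R[G]\langle\mathcal{Q}_{n-\bullet}(\Lambda)\rangle,\partial)$ is a finitely generated free $R[G]$-complex which is $G$-equivariantly homotopy equivalent to the equivariant chain complex of the $G$-cover of a connected space whose fundamental group surjects onto $G$ — precisely the input required by \cite[Section~4]{OTFPOAHDIPOFG}.

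The hypothesis $\mu_{L_\Lambda}\ge 2n+2$ now plays the role of the strong-monotonicity regime of \cite[Section~5.2]{OTFPOAHDIPOFG}. Since $\dim L_\Lambda=n+1$, the homology $H_\bullet(L^G;R)$ vanishes outside degrees $0,\dots,n+1$, and the bound $\mu_{L_\Lambda}\ge 2n+2$ ensures that the reduction $\{0,\dots,n+1\}\to\Z/\mu_{L_\Lambda}\Z$ introduces no identifications among the degrees that control the bottom of the complex. The $\Z/\mu_{L_\Lambda}\Z$-graded complex may therefore be treated near degree $0$ exactly as a $\Z$-graded one, so that the estimates \cite[Theorem~3.10]{OTFPOAHDIPOFG} and \cite[Corollary~3.11]{OTFPOAHDIPOFG} (equivalently, their $\Z/\mu_{L_\Lambda}\Z$-graded refinements in \cite[Section~4]{OTFPOAHDIPOFG}) — the ones used to prove Proposition~\ref{firstineqfordanddeltafinim} — apply and bound $c_{n-1}(\Lambda)$, the number of degree-$1$ generators, from below by $\delta(G)$, by $d(G)$ when $G$ is simple or solvable (using $d(G)=\delta(G)$ from \cite{IR}, as is already built into \cite[Corollary~3.11]{OTFPOAHDIPOFG}), and by $2$ when $G$ is not cyclic (a non-cyclic finite group having $\delta(G)\ge 2$); these are statements (i), (ii) and (iii).

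The step that I expect to require the most care is the grading bookkeeping: one must confirm that reducing the $\Z$-graded Morse complex modulo $\mu_{L_\Lambda}\ge 2n+2$ produces no collision between the low degrees governing the degree-$1$ estimate and the higher degrees still carrying homology of $L^G$, so that the sharp bounds of the $\mu=0$ case are recovered in full rather than the weakened ones (with a ``$-1$'') of Proposition~\ref{fthgabwithoutassumption}. The remaining ingredients — the identification of the twisted Morse complex with the chain complex of $L^G$, and the verification that \cite[Section~4]{OTFPOAHDIPOFG} applies — are routine given Theorem~\ref{mainthmsmplehomeqnts} and the discussion already carried out for Propositions~\ref{firstineqfordanddeltafinim} and \ref{secineqfornmin2usdirref}.
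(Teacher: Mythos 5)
Your proposal is correct and follows essentially the same route as the paper: the paper likewise deduces the proposition by feeding Theorem~\ref{mainthmsmplehomeqnts}(ii) into the $\Z/\mu_{L_\Lambda}\Z$-graded algebraic machinery of Ono--Pajitnov, citing \cite[Section 4]{OTFPOAHDIPOFG} and noting that the three bounds are reformulations of the estimates of \cite[Section 5.2]{OTFPOAHDIPOFG}, with the hypothesis $\mu_{L_\Lambda}\geq 2n+2$ being precisely the transplanted condition under which their unweakened degree-one estimates hold. The only caution is that your heuristic for the threshold (no degree collisions among $0,\dots,n+1$, which would already follow from $\mu_{L_\Lambda}\geq n+3$) is not itself the operative hypothesis; the weight is carried by the cited mod-$\mu$ results, exactly as in the paper.
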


Using Propositions~\ref{fthgabwithoutassumption} and \ref{sthsdjjkdfjwithoutas}, we get Theorem~\ref{intrineqrelsimaslnonz}.

\section{Examples of exact Lagrangian fillings}
\label{constrproductanddouble}

Here we describe a general construction of Lagrangian fillings in the symplectisation of the standard contact vector space $(\R^{2n+1},dz+\theta_0)$, where $\theta_0:=-(y_1dx_1+\hdots+y_kdx_k)$. The goal is to construct examples of fillings diffeomorphic to $N \times \R^{k+1}$, where $N$ is a closed manifold, to which our results can be applied in order to produce non-trivial lower bounds for the number of Reeb chords on the Legendrian end $N \times S^k \subset \R^{2n+1}$.

\subsection{Fillings in the symplectisation of the standard contact space}

\label{sec:examples}

Consider the standard disc filling $L^{k+1}_0 \subset (\R \times J^1\R^k,d(e^t\alpha_0))$ of the standard Legendrian $k$-sphere
\[\Lambda^{k}_0 \subset (J^1\R^k=\R^{2k+1},\alpha_0:=dz+\theta_0)\]
of $\OP{tb}(\Lambda^k_0)=(-1)^{k(k-1)/2+1}$. In particular, the Maslov class of $L^{k+1}_0$ vanishes for topological reasons. The filling $L^{k+1}_0$ can either be constructed by hand, or by observing that the contact manifold $J^1\R^k=\R^{2k+1}$ endowed with the standard contact structure is contactomorphic to the complement of a point $(S^{2k+1} \setminus \{\OP{pt}\},\xi_0)$ of the standard tight contact sphere. Namely, under this identification, $\Lambda^k_0$ is identified with $\R^{k+1} \cap S^{2k+1} \subset \C^{k+1} \cap S^{2k+1}$ while $L^{k+1}_0$ can be identified with a compactly supported perturbation of $\R^{k+1} \subset \C^{k+1}$ that misses the origin.

First, we observe that
\[ N \times L^{k+1}_0 \subset (T^*N \times (\R \times J^1\R^k),d(e^t(\lambda_N+\alpha_0))) \]
is an exact Lagrangian filling of $N \times \Lambda^k_0 \subset (J^1(N \times \R^k),dz+\lambda_N+\theta_0)$ diffeomorphic to $N \times S^k$, where $\lambda_N$ denotes the Liouville one-form on $T^*N$. Observe that the Maslov class of $N \times L^{k+1}_0$ also vanishes.

Second, given that $N$ embeds into $\R^{\dim N+k}$ with trivial normal bundle, it follows that there exists a contact-form preserving embedding
\[J^1(N \times \R^k)=T^*(N \times \R^k) \times \R \hookrightarrow T^*\R^{\dim N+k}\times \R=J^1\R^{\dim N+k}.\]
For example, this embedding can be obtained starting from the canonical open exact symplectic embedding $T^*(N \times \R^k) \hookrightarrow T^*\R^{\dim N+k}$ induced by a choice of open embedding $N \times \R^k \hookrightarrow \R^{\dim N+k}$, and then taking the canonical lift to the corresponding jet spaces.

\begin{rem}
We recall that the standard representative of $\Lambda^k_0 \subset \R^{2k+1}$ has a single transverse Reeb chord in degree $k$. Using this fact, we see that the Legendrian embedding $N \times \Lambda^k_0 \hookrightarrow \R^{\dim N+k}$ produced above has a Bott manifold of Reeb chords that, moreover, is diffeomorphic to $N$. A generic perturbation of this Legendrian embedding can be seen to produce precisely a number $\OP{Morse}(N)$ of transverse Reeb chords.\end{rem}

\subsection{Constructing exact Lagrangian fillings with a given fundamental group}
For any finitely presented group $G$, the above method can be used to construct an exact Lagrangian filling with fundamental group equal to $G$.
\begin{prop}\label{constrrealizoffundgroup}
For a given finitely-presented group $G$, there exists a closed $n$-dimensional manifold $M_G$ such that $\pi_1(M_G)= G$ and a Legendrian submanifold $M_G\times \Lambda^k_0\subset (\R^{2(n+k)+1},dz+\theta_0)$ which admits an exact Lagrangian filling diffeomorphic to $M_G\times \R^{k+1}$ for any $k \gg 1$ sufficiently large, where this Lagrangian filling moreover is spin and has vanishing Maslov number.
\end{prop}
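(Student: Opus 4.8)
The plan is to realise the prescribed fundamental group by a closed manifold $M_G$ and then run the general construction of Section \ref{sec:examples} with $N = M_G$. First I would invoke the classical fact that every finitely presented group $G$ arises as $\pi_1$ of a closed smooth manifold of any dimension $n \ge 4$: starting from a finite presentation $\langle g_1,\dots,g_r \mid w_1,\dots,w_s\rangle$, take a connected sum of $r$ copies of $S^1 \times S^{n-1}$ to kill nothing but produce a free group of rank $r$, then perform surgery along framed embedded circles representing the relators $w_j$ (which, for $n \ge 4$, can be taken to be embedded with trivial normal bundle since $n-1 \ge 3$). The resulting closed $n$-manifold $M_G$ has $\pi_1(M_G) = G$. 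I would moreover observe that $M_G$ can be taken to be orientable and, after an additional stabilisation or a suitable choice of framings, stably parallelisable; in particular $M_G$ is spin, which is what will be needed at the end. Here I am free to choose $n$ as large as convenient.

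Next, since $M_G$ is stably parallelisable, it embeds in $\R^{n+k}$ with trivial normal bundle for all $k \gg 1$ sufficiently large (Whitney embedding plus the triviality of the stable normal bundle; taking $k$ large makes the normal bundle trivial on the nose). This is exactly the hypothesis needed in Section \ref{sec:examples}: with $N = M_G$ we obtain the exact Lagrangian filling
\[
 M_G \times L^{k+1}_0 \subset \big(T^*M_G \times (\R \times J^1\R^k), d(e^t(\lambda_{M_G}+\alpha_0))\big)
\]
of the Legendrian $M_G \times \Lambda^k_0 \subset J^1(M_G \times \R^k)$, and then the contact-form-preserving embedding
\[
 J^1(M_G \times \R^k) \hookrightarrow J^1\R^{n+k} = \R^{2(n+k)+1}
\]
coming from a choice of open embedding $M_G \times \R^k \hookrightarrow \R^{n+k}$ (which exists precisely because the normal bundle is trivial). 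Pushing the filling forward along this embedding gives an exact Lagrangian filling of a Legendrian submanifold of $(\R^{2(n+k)+1}, dz+\theta_0)$. Since $L^{k+1}_0$ is diffeomorphic to $\R^{k+1}$ (it is a disc filling, i.e.\ a compactly supported perturbation of $\R^{k+1} \subset \C^{k+1}$ missing the origin — see Section \ref{sec:examples}), the total filling is diffeomorphic to $M_G \times \R^{k+1}$, and $\pi_1(M_G \times \R^{k+1}) = \pi_1(M_G) = G$ as required.

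It remains to check the spin condition and the vanishing of the Maslov number. The Maslov class of $L^{k+1}_0$ vanishes for topological reasons (as noted in Section \ref{sec:examples}, it is a disc), hence so does that of $M_G \times L^{k+1}_0$, and pushing forward along a contact embedding does not change this; thus the Maslov number of the filling is $0$. For the spin condition: the filling is diffeomorphic to $M_G \times \R^{k+1}$, which is spin if and only if $M_G$ is spin — and this is exactly why we arranged $M_G$ to be stably parallelisable (equivalently, we only need $w_2(M_G) = 0$) in the first step. I expect the main obstacle, such as it is, to be the bookkeeping in the first step: one must realise $G$ by a \emph{closed} manifold that is simultaneously spin (or stably parallelisable) and, for the chosen $n$, admits a trivial-normal-bundle embedding in some $\R^{n+k}$. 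All of this is standard surgery theory — the circle surgeries can be performed with trivial framings, stabilisation by connect-summing with $S^{n/2}\times S^{n/2}$ or passing to higher $n$ handles the parallelisability, and taking $k$ large handles the normal bundle — so no genuinely hard point arises; the content of the proposition is simply the observation that the construction of Section \ref{sec:examples} accepts any such $N$.
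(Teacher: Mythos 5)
Your argument is correct and, in its main geometric content, identical to the paper's: embed $M_G$ into $\R^{n+k}$ with trivial normal bundle (stable parallelisability plus Whitney, with $k$ large so that the stably trivial normal bundle is actually trivial, as in \cite[Lemma 3.3]{GOHS}), push the product filling $M_G \times L^{k+1}_0$ of $M_G \times \Lambda^k_0$ through the induced contact-form-preserving embedding $J^1(M_G\times\R^k)\hookrightarrow J^1\R^{n+k}$, and note that the Maslov class vanishes because it does for the disc factor $L^{k+1}_0$, while the filling $M_G\times\R^{k+1}$ is spin because $M_G$ is stably parallelisable. The one place where you genuinely diverge from the paper is the auxiliary step of realising $G$ as $\pi_1$ of a \emph{stably parallelisable} closed manifold: you build $M_G$ by surgery, taking $\#_r(S^1\times S^{n-1})$ and killing the relators by framed surgery on embedded circles, with a remark that the framings must be chosen compatibly with the stable framing. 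The paper instead (Lemma \ref{realizfundgrparclman}) thickens a presentation complex of $G$ inside $\R^n$ to a compact codimension-zero submanifold $X^{\OP{op}}$ and takes its double in $\R^{n+1}$; since $M_G$ then bounds the stably parallelisable manifold $X^{\OP{op}}\times[0,1]$, its stable parallelisability is automatic, with no framing bookkeeping. Your surgery route is equally standard and works in any fixed dimension $n\ge 4$, but it does carry the (resolvable) obligation you yourself flag: one must check that the relator circles can be surgered with framings extending the stable framing, using $\pi_1(SO(n-1))=\Z_2$ to adjust the normal framing. The paper's doubling trick buys exactly the avoidance of that check, at the cost of less control over $\dim M_G$ — which is harmless here since $n$ and $k$ are free to be large.
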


\begin{proof}
Recall that the strong Whitney embedding theorem implies that a stably parallelisable manifold $M$ embeds into $\R^{2\dim M}$ with a stably trivial normal bundle. In particular, $M$ embeds into $\R^{2\dim M+1}$ with a trivial normal bundle. (See \cite[Lemma 3.3]{GOHS}.) The construction in Section \ref{sec:examples} together with the following standard result thus proves the claim.
\end{proof}

\begin{lem}\label{realizfundgrparclman}
Given a finitely presented group $G$, there exists a  stably parallelisable closed manifold $M_G$ for which $\pi_1(M_G)=G$.
\end{lem}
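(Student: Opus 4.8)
The plan is to build $M_G$ by a surgery argument starting from a standard model whose fundamental group is $G$, while keeping track of stable parallelisability throughout. First I would fix a finite presentation $G = \langle x_1,\dots,x_p \mid r_1,\dots,r_q\rangle$ and form the presentation complex; thickening it, one obtains a compact manifold with boundary inside some $\R^N$ realising $\pi_1 = G$. The cleanest route, however, is the classical one: take the connected sum $Y := \#_{i=1}^p (S^1 \times S^{n-1})$, which has $\pi_1(Y)$ free on $x_1,\dots,x_p$ and is parallelisable (it is the boundary of $\natural_{i=1}^p(S^1\times D^n)$, hence stably parallelisable, and in fact parallelisable as an open-book-like plumbing), and then kill the relators $r_j$ by surgery. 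Each relator $r_j$ is represented by an embedded circle $\gamma_j \subset Y$; for $n \ge 3$ these circles can be taken disjoint and embedded, and since $Y$ is parallelisable the normal bundle of each $\gamma_j$ is trivial, so a framing exists and surgery along $(\gamma_j, \text{framing})$ is defined.

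The key steps, in order, are: (1) verify that surgery on a circle in a stably parallelisable manifold can be performed so that the result is again stably parallelisable — this is where one must choose the framing carefully, using that the stable tangent bundle restricted to $D^2 \times S^{n-2} \hookrightarrow S^1\times S^{n-1}$ is trivial and that the clutching data for the surgered manifold is governed by an element of $\pi_{1}(SO) = \Z/2$ (for $n$ large) which one can absorb by an appropriate choice; (2) check the effect on $\pi_1$: by van Kampen, surgery on $\gamma_j$ (with $n-1 \ge 2$, so the co-core sphere is simply connected and contributes nothing to $\pi_1$) precisely adds the relation $r_j = 1$, so after all $q$ surgeries we obtain $\pi_1(M_G) = G$; (3) take $n := \dim M_G$ as large as we like by first crossing with spheres or by performing the construction in high dimension from the outset, which is exactly the regime ``$k \gg 1$'' needs in Proposition~\ref{constrrealizoffundgroup}. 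Finally, stable parallelisability of $M_G$ together with the strong Whitney embedding theorem (as quoted in the proof of Proposition~\ref{constrrealizoffundgroup}) gives the embedding $M_G \hookrightarrow \R^{2\dim M_G+1}$ with trivial normal bundle that feeds the filling construction.

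I expect the main obstacle to be step (1): ensuring that each surgery preserves stable parallelisability rather than merely stable parallelisability ``up to a correction.'' Concretely, when one does surgery on $S^1 \times S^{n-1} \subset Y$ along the $S^1$-factor, the new manifold is $Y$ with a copy of $S^1\times D^n$ removed and $D^2 \times S^{n-1}$ glued in; the stable tangent bundle of the result is determined by how the stable framings on the overlap region match, and a nontrivial mismatch in $\pi_1(SO) = \Z/2$ would obstruct parallelisability. The remedy is standard but must be stated: one has the freedom to twist the attaching framing of the surgery by the generator of $\pi_1(SO(n))$, and since $Y \setminus (S^1\times D^n)$ and $D^2 \times S^{n-1}$ are each stably parallelisable with a one-dimensional space of framings-up-to-homotopy on the gluing region, one choice makes the framings agree and yields a stably parallelisable $M_G$. (Alternatively, one may invoke Kervaire's realisation results directly — compare Theorem~\ref{kerealorh1h2vansupperf} — or cite that every finitely presented group is the fundamental group of a closed stably parallelisable $n$-manifold for every $n \ge 4$, which is a well-known refinement of the classical $\pi_1$-realisation theorem.) Once this framing bookkeeping is dispatched, the rest is routine van Kampen and dimension counting.
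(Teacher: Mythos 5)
Your argument is correct, but it takes a genuinely different route from the paper's. You build $M_G$ by starting with $\#_p(S^1\times S^{n-1})$ and performing framed surgery on embedded circles representing the relators; the substance of your proof is then the bookkeeping required to choose framings in $\pi_1(SO)\cong\Z/2$ so that each surgery preserves stable parallelisability (and a van Kampen count, which needs $n\ge 4$ rather than the $n-1\ge 2$ you wrote, so that the co-core $S^{n-2}$ is simply connected — a minor slip absorbed by ``take $n$ large''). The paper instead takes a finite presentation $2$-complex $X$ with $\pi_1(X)=G$, embeds it in $\R^n$ for $n$ large, thickens it to a compact codimension-$0$ submanifold $X^{\OP{op}}\subset\R^n$, and sets $M_G$ to be the double of $X^{\OP{op}}$, i.e.~$\partial(X^{\OP{op}}\times[0,1])$ with corners smoothed. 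Stable parallelisability is then immediate: $X^{\OP{op}}$ is an open subset of $\R^n$ hence parallelisable, so $X^{\OP{op}}\times[0,1]$ is too, and the boundary of a stably parallelisable manifold is stably parallelisable. The paper's approach thus dispenses entirely with the framing analysis, which you correctly identified as the delicate step in the surgery route; what your approach buys is a more explicit handle decomposition of $M_G$ (one circle-surgery per relator), tighter control over $\dim M_G$, and it fits more visibly into the classical Dehn/Markov realisation theorem. Both constructions, composed with the strong Whitney embedding theorem as in Proposition~\ref{constrrealizoffundgroup}, feed the filling construction in the same way.
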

\begin{proof}
Observe that there exists a cell complex $X$ with $\pi_1(X)=G$, see \cite[Proposition 1.26, Corollary 1.28]{AT}. Then we embed $X$ into $\R^n$ for some $n$ and thicken it to the open manifold $X^{\OP{op}}$ so that $X^{\OP{op}}$ is a compact manifold of codimension $0$ in $\R^n$ which is stably parallelisable. Then, we define $M_G$ to be a double of $X^{\OP{op}}$ in $\R^{n+1}$, i.e.~we smoothen the corners of the boundary of $X^{\OP{op}}\times [0,1]\subset \R^{n+1}$, thus producing a closed submanifold of $\R^{n+1}$. Since $X^{\OP{op}}$ is stably parallelisable, $X^{\OP{op}}\times [0,1]$ is stably parallelisable as well. Since $M_G$ is the boundary of a stably parallelisable manifold, it is hence itself stably parallelisable.
\end{proof}

\subsection{Exact Lagrangian fillings diffeomorphic to stabilised homology spheres}
In this section, we discuss (integral) homology spheres, as well as related exact Lagrangian cobordisms having interesting fundamental groups. Recall that any (integral) homology sphere is stably parallelisable by \cite{SEATOM}. (The proof of this statement is a modification of the proof of stably parallelisability of homotopy spheres, see \cite{GOHS}.) Using this fact together with Proposition~\ref{constrrealizoffundgroup} we conclude the following. For a given homology sphere $S$, there exists a Legendrian $S\times \Lambda_0^k$ admitting an exact Lagrangian filling diffeomorphic to $S\times \R^{k+1}$ with fundamental group $\pi_1(S\times \R^{k+1})=\pi_1(S)$, given that $k \gg 1$ is chosen sufficiently large.
\subsubsection{The Poincar\'{e} homology sphere}\label{Poinchomsphest}

The Poincar\'{e} homology $3$-sphere (or the dodecahedral space of Poincar\'{e}) is a classical space that has received particular attention. Namely, it was the first example of a homology sphere which is not a sphere, and it also lies in a class of three manifolds closely related to Platonic solids.

The Poincar\'{e} homology sphere can be described as a Brieskorn $3$-sphere. More precisely, consider a polynomial $f: \C^3\to \C$ given by $f(z_1,z_2,z_3)=z_1^2+z_2^3+z_3^5$ and a singular complex variety $f^{-1}(0)$. Note that $f^{-1}(0)$ is singular only at the origin, i.e. when $z_i=0$ for $i=1,2,3$. The Poincar\'{e} homology $3$-sphere $S^3_P$ is defined to be $S^5\cap f^{-1}(0)$, where $S^5\subset \C^3$ is a small $5$-sphere around the origin. There are many other interesting descriptions of the Poincar\'{e} homology sphere, see \cite{EFOTPHS}.

Note that $\pi_1(S^3_P)$ is isomorphic to the so-called binary icosahedral group $\langle 2,3,5 \rangle$ defined by the presentation
\begin{align*}
\langle s,t\ |\ (st)^2=s^3=t^5 \rangle.
\end{align*}
Note that $\langle 2,3,5 \rangle$ is the unique stem extension, where the base normal subgroup is cyclic group $\Z_2$ and the quotient group is the alternating group $A_5$.
We also observe that $A_5$ is the smallest non-abelian finite simple group.

The Poincar\'{e} sphere $S^3_P$ embeds into $\R^5$ by the above, and hence does so with a trivial normal bundle by topological reasons. Using the construction in Section \ref{sec:examples}, we can thus produce a Legendrian embedding $S^3_P\times \Lambda^k_0\subset \R^{2(3+k)+1}$ which admits an exact Lagrangian filling diffeomorphic to $S^3_P\times \R^{k+1}$ for any $k \ge 2$.

Now we show that bounds described in Corollary~\ref{maininequalitystableMorsenumberofafilling} and in Theorems~\ref{intrineqabssi} and \ref{intrineqrelsimaslnonz} are stronger than the bound coming from the homological data of the filling.
Part (ii) of Theorem \ref{intrineqabssi} applied to the perfect group $\langle 2,3,5 \rangle$ implies the bound $|\mathcal Q(S^3_P\times S^1)|\geq 6$ for all representatives. In addition, note that $\OP{Morse}(S^3_P)=6$. This follows from the fact that $S^3_P$ admits a Heegaard splitting of genus $2$, see \cite[Section 9D]{KAL}. Moreover, combining Theorem~\ref{intrineqabssi} with \cite[Proposition 2.1]{OTSMNOACM} we get that even $\OP{stableMorse}(S^3_P)\!=\!6$ holds. Lemma~\ref{lem:stablemorse} now shows that we have $\OP{stableMorse}(S^3_P\times D^{k+1})=6$ as well. In conclusion, the bound we get using the method of Ono-Pajitnov actually equals the Morse number of $S^3_P\times D^{k+1}$. Finally, note that Seidel's isomorphism only predicts that $|\mathcal Q(S^3_P\times S^k)|\geq 2$.

\subsubsection{High-dimensional homology spheres}
We again consider the binary icosahedral group $\langle 2,3,5 \rangle$.
Note that the binary icosahedral group is a finitely presented superperfect group, and by the work of Kervaire \cite{SHSATFG}, see Theorem~\ref{kerealorh1h2vansupperf}, there exists an $n$-dimensional smooth homology sphere that we call $M_{\langle 2,3,5 \rangle}$, where $n\geq 5$, with the property that $\pi_1(M_{\langle 2,3,5 \rangle})\simeq \langle 2,3,5 \rangle$. Using the same arguments as we described in Section~\ref{Poinchomsphest}, we produce a Legendrian manifold $M_{\langle 2,3,5 \rangle}\times S^k$ for $k \gg 1$ sufficiently large, for which the bound
$$|\mathcal Q(M_{\langle 2,3,5 \rangle}\times S^k)|\geq 6$$
thus is satisfied for all representatives. Finally, observe that Seidel's isomorphism again predicts that $|\mathcal Q(M_{\langle 2,3,5 \rangle}\times S^k)|\geq 2$.

\subsection{Exact Lagrangian fillings with finite solvable\\ fundamental group}\label{solvableexamplesfarfromseidelsiso}

In this section, we produce examples for which the estimates described in Section~\ref{estimPajOnogensection} are much stronger compared to the estimate coming from Seidel's isomorphism. In order to do this, we first provide examples of finite solvable groups $G$ with the property that $d(G)-d(G/[G,G]) \gg 0$ is arbitrarily large.

We start with the following observation
\begin{align}\label{absemiddirprreltohsh}
(H\rtimes_{\varphi}K)/[H\rtimes_{\varphi}K, H\rtimes_{\varphi}K]\simeq (K/[K,K])\times ((H/[H,H])/G_H),
\end{align}
where $G_H$ is a subgroup of $H/[H,H]$ generated by $[h]-[\phi(h)]$. The proof can, for example, be deduced from \cite[Proposition 29]{TLCADSOTBGOTSATPS}.
Then we describe a series of finite solvable groups $G_m$ with the property that $d(G_m)=m+1$, $d(G_m/[G_m,G_m])=1$.

\subsubsection*{Example}

Let $\F_q$ be a finite field with $q$ elements, where $q>2$ is a prime number. We define a group $G_m:=\F^m_q \rtimes_{\varphi} \F^{\ast}_q$, where $\varphi: \F^{\ast}_q\to \Aut(\F^m_q)$ acts on the additive group $\F^m_q$ by $\varphi(f)((f_1,\dots,f_m)):=(ff_1,\dots,ff_m)$, $f\in \F^{\ast}_q$, $f_i\in \F_q$. It is easy to see that $G_m$ is a  solvable group. This follows from the existence of the following subnormal series
$$\{1\}< \F_q < \dots < \F^{m-1}_q < \F^m_q < G_m,$$
where $G_m/\F^m_q\simeq \F^{\ast}_q$ and $\F^i_q/\F^{i-1}_q\simeq \F_q$.

Then we observe that from Formula~\ref{absemiddirprreltohsh} it follows that $G_m/[G_m,G_m]\simeq \F^{\ast}_q$ is a non-trivial cyclic group. Therefore, $d(G_m/[G_m,G_m])=1$.

Finally we prove that $d(G_m)=m+1$. Since $d(\F^m_q)=m$ and $d(\F^{\ast}_q)=1$, we get that $d(G_m)\leq m+1$.

We first show that $d(G_m)\geq m$. Assume that $G_m$ has a generating set $S_{G_m}=\{(v_i,g_i)\ |\ 1\leq i\leq k \}$ with $v_i\in\F^m_q$, $g_i\in \F^{\ast}_q$ and $k<m$. Then, note  that every element in the group generated by $S_{G_m}$ has a form $(\sum_{1\leq i\leq k}a_iv_i,h)$, where $h \in \F^{\ast}_q$, $a_i \in \F_q$, and $v_i\in \F^m_q$. This leads to the contradiction with the fact that $d(\F^m_q)=m$.

Then we take a set of generators $S_{G_m}$ and $(v,g)\in S_{G_m}$ with the property that $g$ is a generator of $\F^{\ast}_q$ ($\F^{\ast}_q$ is a cyclic group). Such an element definitely exists since if all the elements of $S_{G_m}$ are of the form $(w,h)$, where $h$ is not a generator of $\F^{\ast}_q$, then $S_{G_m}$ is not a generating set of $G_m$. Again, $\F^{\ast}_q$ is a cyclic group of order $q-1$, and the order of $g$ that we denote by $\OP{ord}(g)$ is coprime to $|\F_q|$. This implies that $\OP{ord}((v,g))=\OP{ord}(g)$ is coprime to $|\F_q|$, and hence none of the primes which divide $\OP{ord}((v,g))$ will divide $[G_m:\langle (v,g) \rangle]=|\F^m_q|$. Let $\pi$ be the set of primes which divide $\OP{ord}(g)$. Then $\langle (v,g) \rangle$ and $\langle (0,g) \rangle$ are two Hall $\pi$-subgroups, and hence by Theorem~\ref{conjpihallsubgr} they are conjugate by some element $x\in G_m$. This implies that $S_{G_m}$, after conjugation, contains an element $x(v,g)x^{-1}=(0,\tilde{g})$, where $\tilde{g}$ is a generator of $\F^{\ast}_q$. We also would like to mention that it is possible to find $x$ explicitly without relying on the theory of Hall $\pi$-subgroups. Then, already knowing that $d(G_m)\geq m$, we can apply the previous argument and see that, in fact, $d(G_m)\geq m+1$. Together with the fact that $d(G_m)\leq m+1$ we get that $d(G_m)=m+1$.

Using Proposition \ref{constrrealizoffundgroup}, we construct an exact Lagrangian filling $M_{G_m}\times L^{k+1}_0$ of a Legendrian $M_{G_m}\times \Lambda^k_0$ inside the standard contact vector space.
Then Theorem \ref{intrineqabssi} tells us that
\begin{align*}
|\mathcal Q(\Lambda)|&\ge  m + 1 -\rank H_1(M_{G_m}\times \R^{k+1};\F) +\sum_{i} \rank H_i(M_{G_m}\times \R^{k+1};\F) \\
& = m +\sum_{i} \rank H_i(M_{G_m}\times \R^{k+1};\F)
\end{align*}
is satisfied for all representatives.

On the other hand, the bound given by Seidel's isomorphism is
$$|\mathcal Q(\Lambda)|\geq \sum_{i} \rank H_i(M_{G_m}\times \R^{k+1};\F).$$
Finally, note that the difference between the previous two bounds gets arbitrarily large as $m\to \infty$.

\clearpage

\address{Department of Mathematics, Uppsala University\\
Box 480, SE-751 06 Uppsala, Sweden\\
\email{dimitroglou.georgios@gmail.com}}

\address{Faculty of Mathematics and Physics, Charles University\\ 
Sokolovsk\'{a} 83, 18000 Praha 8, Czech Republic\\ 
\email{golovko@karlin.mff.cuni.cz}\\
}

\end{document}